\newtheorem{proposition}{Proposition}[section]
\newtheorem{theorem}[proposition]{Theorem}
\newtheorem{lemma}[proposition]{Lemma}
\newtheorem{corollary}[proposition]{Corollary}
\theoremstyle{definition}
\newtheorem{definition}[proposition]{Definition}
\theoremstyle{remark}
\newtheorem{remark}[proposition]{Remark}
\newtheorem*{remark*}{Remark}
\newtheorem*{remarks*}{Remarks}
\numberwithin{equation}{section}
\newcommand\B{\mathbb{B}}
\newcommand\Z{\mathbb{Z}}
\newcommand\R{\mathbb{R}}
\newcommand\C{\mathbb{C}}
\newcommand\N{\mathbb{N}}
\newcommand\G{\mathbb{G}}
\newcommand\HH{\mathbb{H}}
\newcommand\SSS{\mathbb{S}}
\newcommand\A{\mathbb{A}}
\newcommand\GL{\operatorname{GL}}
\newcommand\PSL{\operatorname{PSL}}
\newcommand\F{\mathbb{F}}
\newcommand\Q{\mathbb{Q}}
\def\g{\mathfrak{g}}
\def\a{\mathfrak{a}}
\newcommand\eps{\varepsilon}
\newcommand{\diam}{\operatorname{diam}}
\newsavebox{\proofbox}
\savebox{\proofbox}{\begin{picture}(7,7)  \put(0,0){\framebox(7,7){}}\end{picture}}
\begin{document}

\title{Lectures on approximate groups and Hilbert's 5th problem}
\author{Emmanuel Breuillard}
\address{Laboratoire de Math\'ematiques\\
B\^atiment 425, Universit\'e Paris Sud 11\\
91405 Orsay\\
FRANCE}
\email{emmanuel.breuillard@math.u-psud.fr}

\begin{abstract} This paper gathers four lectures, based on a mini-course at IMA in 2014, whose aim was to discuss the structure of approximate subgroups of an arbitrary group, following the works of Hrushovski and of Green, Tao and the author. Along the way we discuss the proof of the Gleason-Yamabe theorem on Hilbert's 5th problem about the structure of locally compact groups and explain its relevance to approximate groups. We also present several applications, in particular to uniform diameter bounds for finite groups and to the determination of scaling limits of vertex transitive graphs with large diameter.
\end{abstract}

\maketitle

\setcounter{tocdepth}{1}

\tableofcontents

\section{Additive combinatorics on groups}

\subsection{Introduction}
A key idea of Geometric Group Theory as initiated by Gromov and others in the seventies and eighties is to study infinite groups as geometric objects via their Cayley graphs. Endowed with the graph distance, the Cayley graphs are naturally metric spaces. Typically we look at a group $\Gamma$ generated by a finite symmetric set $S$ and define the associated Cayley graph and word metric:

 $$d_S(g,h):= \inf \{n \geq 0, g^{-1}h \in S^n \},$$
 where $S^n:=S \cdot \ldots \cdot S$ is the product set of $n$ copies of $S$. In particular the ball of radius $n$ around the identity $B_S(e,n)$ is nothing other than $S^n$. Usually people are interested in properties that are independent of the choice of $S$, and look at very large values of $n$. So $S$ is of bounded size and $n$ is large.

Additive Combinatorics on groups on the other hand is concerned with product sets $A^n:= A \cdot \ldots \cdot A$, where $n$ is of bounded size, but $A$ is a very large finite subset of an ambient group $G$. Traditionally, combinatorialists and number theorists have been interested in studying large finite subsets of integers that do not grow much under addition. This is an old and well-developed field of mathematics (much older than Geometric Group Theory) with applications to many number theoretical questions (such as unique factorization in rings of algebraic integers, arithmetic progressions in primes, Goldbach's problem, Waring's problem, etc).

Only fairly recently did people start looking at similar questions on non-abelian groups, only to find out that many of the tools developed in the commutative context could also be applied in a non-commutative setting.


\subsection{Examples of results from additive combinatorics}

To fix ideas, let us recall a sample of landmark results from additive combinatorics:

\begin{theorem}[Sum-product theorem, Bourgain-Katz-Tao \cite{bkt}, Bourgain-Glibichuk-Konyagin \cite{bgk}] Given $\eps>0$, there is $\delta>0$ such if $p$ is a prime and $A \subset \F_p$ is a subset of the field $\F_p$ with $p$ elements, then either $|A+A|> |A|^{1+\delta}$ or $|AA|> |A|^{1+\delta}$ or $|A|> p^{1-\eps}$.
\end{theorem}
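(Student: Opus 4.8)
The plan is to argue by contraposition: if $A \subset \F_p$ satisfies $|A+A| \le |A|^{1+\delta}$ and $|AA| \le |A|^{1+\delta}$, then $|A| > p^{1-\eps}$, provided $\delta = \delta(\eps) > 0$ is chosen small enough. (Sets of size bounded in terms of $\eps$ can be disposed of directly, so I assume $|A|$ large; deleting a single point I may also assume $0 \notin A$.) The guiding picture is that a set growing under neither operation must behave like a subring of $\F_p$, and that $\F_p$, being of prime order, has no proper subring, so $A$ is forced to fill up almost all of $\F_p$. The task is to make this quantitative, and the mechanism I would use is incidence geometry in the affine plane $\F_p^2$.

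Concretely, I follow Elekes. Put $P := (A+A) \times (AA) \subset \F_p^2$, and for each $(a,b) \in A \times A$ let $\ell_{a,b}$ be the line through $(b,0)$ of slope $a$; since $0 \notin A$ these $|A|^2$ lines are pairwise distinct, and I write $L$ for their set. For every $c \in A$ the point $(b+c,\,ac)$ lies on $\ell_{a,b}$ and belongs to $P$, so each line of $L$ meets $P$ in at least $|A|$ points and the number of incidences satisfies $I(P,L) \ge |A|^3$. On the other hand $|P| \le |A+A|\,|AA| \le |A|^{2+2\delta}$ and $|L| = |A|^2$, so with $M := |A|^{2+2\delta}$ we are looking at at most $M$ points and $M$ lines. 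Now I invoke the Bourgain--Katz--Tao incidence bound over $\F_p$ \cite{bkt}: for every $\alpha > 0$ there is $\beta = \beta(\alpha) > 0$ such that any $M$ points and $M$ lines in $\F_p^2$ with $M \le p^{2-\alpha}$ have at most $C_\alpha M^{3/2-\beta}$ incidences. Applied with $\alpha = \eps$, and noting that $M \le p^{2-\eps}$ once $|A| \le p^{1-\eps}$ and $\delta$ is small, this gives $|A|^3 \le C_\eps |A|^{(2+2\delta)(3/2-\beta)} = C_\eps |A|^{3 - 2\beta + O(\delta)}$, so for $\delta$ sufficiently small in terms of $\eps$ the set $A$ is bounded in size --- a case already set aside. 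Hence $|A| > p^{1-\eps}$, which proves the theorem.

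The whole weight of this argument rests on the incidence bound. Over $\R$ one has the Szemer\'edi--Trotter estimate $I \lesssim M^{4/3}$, but its proof relies on a cell decomposition that uses the order structure of $\R$; over $\F_p$ that structure is gone, and extracting \emph{any} power saving below the trivial bound $I \lesssim M^{3/2}$ is the genuinely hard part. Historically it was obtained by Bourgain--Katz--Tao through a bootstrap that plays the incidence bound and the sum-product estimate against each other, and more recently and more transparently via Rudnev's point--plane incidence theorem; the point to watch is that whatever proof of the incidence bound one uses must not secretly presuppose the sum-product theorem itself.

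There is also a route bypassing incidence geometry, namely the Fourier-analytic approach of Bourgain--Glibichuk--Konyagin \cite{bgk}. There one first uses the Pl\"unnecke--Ruzsa inequalities, together with a Katz--Tao type lemma for interleaving addition and multiplication, to pass from the twin hypotheses to control on sets such as $AA + AA$ --- which is where the ``approximate subring'' picture becomes precise --- then bounds a mixed additive--multiplicative energy of $A$ by counting solutions of equations like $a_1(a_2 - a_3) = a_4 - a_5$, and finally converts all of this into genuine cancellation in the exponential sums $\sum_{a \in A} e_p(\xi a)$ for $\xi \ne 0$, which is incompatible with $A$ being simultaneously small and stable under both operations. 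Either way, once the relevant core estimate --- the incidence bound, resp.\ the exponential-sum bound --- is in hand, only elementary bookkeeping with exponents remains; the regime of very small $A$, below a fixed power of $p$, can if necessary be handled separately by more elementary, Freiman-theoretic, means.
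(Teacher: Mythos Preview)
The paper does not prove this theorem. It is stated in \S1.2 as a sample of known results from additive combinatorics, with attribution to \cite{bkt} and \cite{bgk}, and is followed only by a brief gloss (``a quantitative version of the fact that there are no non-trivial subrings in $\F_p$'') and a pointer to later quantitative refinements. There is therefore no proof in the paper to compare your proposal against.

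As for your proposal on its own terms: the Elekes incidence setup is correct, and you have honestly identified the crux --- the incidence bound over $\F_p$ is the real theorem, and in \cite{bkt} it and the sum-product estimate are proved \emph{together} via a mutual bootstrap, so invoking the incidence bound as a black box is circular unless you supply an independent proof (e.g.\ via Rudnev, as you note). Your sketch of the BGK route is accurate in spirit but remains at the level of a table of contents; the genuinely nontrivial step there is the Katz--Tao ``approximate subring'' lemma (control of iterated sum-product combinations from small doubling under both operations), and the conversion to exponential-sum cancellation. In short, what you have written is a fair roadmap of the two standard approaches, with the right caveats flagged, but neither branch is carried far enough to stand as a proof.
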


In other words a subset of a prime field must grow either by addition or by multiplication unless it is almost all of the field. This can be viewed as a quantitative version of the fact that there are no non-trivial subrings in $\F_p$. Of course it is easy to construct large subsets $A$ of $\F_p$ that do not grow much under addition, or under multiplication: take a finite arithmetic progression, or geometric progression; then $|AA|\leq 2|A|$, but $|A|$ can be of arbitrary cardinality inside $\F_p$.

By now pretty good quantitative estimates have been obtained on the exponent $\delta(\eps)$, see e.g. \cite{garaev1, garaev2, rudnev} and references therein.

\begin{theorem}[Freiman-Ruzsa-Chang theorem \cite{freiman1, ruzsa, chang}]\label{freiman} Let $A \subset \Z$ be a finite subset such that $|A+A|\leq K|A|$. Then $A$ is $C(K)$-commensurable to a $d$-dimensional arithmetic progression $P$ with $d=O(\log K)$.
\end{theorem}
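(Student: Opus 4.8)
The plan is to follow Ruzsa's proof of Freiman's theorem, upgraded by Chang's spectral lemma so as to bound the dimension by $O(\log K)$ rather than by a power of $K$. Throughout, $c(K),C(K)$ denote unspecified positive functions of $K$ alone.

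\emph{Reduction to a cyclic model.} First I would apply the Plünnecke--Ruzsa inequalities to turn the hypothesis $|A+A|\le K|A|$ into control of all bounded iterated sumsets: $|kA-\ell A|\le K^{k+\ell}|A|$ for every fixed $k,\ell\ge 0$. Next I would invoke Ruzsa's modeling lemma: there exist a subset $A'\subseteq A$ with $|A'|\ge c(K)|A|$, a prime $N$ with $N\le C(K)|A|$, and a Freiman $8$-isomorphism $\phi$ carrying $A'$ onto a subset $B\subseteq\Z/N\Z$. Then $B$ has density $\alpha:=|B|/N\ge c(K)$, so $\log(1/\alpha)=O(\log K)$, and all additive relations of bounded length transfer faithfully between $A'$ and $B$; in particular any subset of $2B-2B$ carrying a generalized-arithmetic-progression structure pulls back through $\phi^{-1}$ to a subset of $2A-2A$ with the same structure.

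\emph{Bogolyubov plus Chang.} Working inside $\Z/N\Z$ with the discrete Fourier transform, Bogolyubov's argument shows that $2B-2B$ contains a Bohr set $\mathrm{Bohr}(\Lambda,\rho)$, with $\rho$ an absolute constant and $\Lambda=\{\xi:\ |\widehat{1_B}(\xi)|\ge\eta\alpha\}$ the large spectrum for a fixed absolute $\eta$. Using only Parseval one gets $|\Lambda|\le\eta^{-2}\alpha^{-1}$, which is merely polynomial in $K$ and yields the classical Freiman--Ruzsa dimension. To do better I would invoke Chang's spectral lemma: the large spectrum is contained in the integer span of a dissociated set of size $O\!\big(\eta^{-2}\log(1/\alpha)\big)$ — a statement proved via Rudin's inequality for dissociated characters (equivalently, an entropy/covering bound for the spectrum). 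Hence one may take $\Lambda$ of rank $d:=O(\log(1/\alpha))=O(\log K)$.

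\emph{From a Bohr set to a progression, and covering.} A rank-$d$ Bohr set of radius $\rho$ in $\Z/N\Z$ contains a proper generalized arithmetic progression $Q$ of dimension $d$ with $|Q|\ge(\rho/d)^d N\ge N/C(K)$; this is a geometry-of-numbers argument (pigeonhole on the torus $\T^d$, or Minkowski's second theorem for the lattice cut out by $\Lambda$). Pulling $Q$ back through $\phi^{-1}$ yields a proper progression $Q'\subseteq 2A'-2A'\subseteq 2A-2A$ of dimension $O(\log K)$ with $|Q'|\ge|A|/C(K)$. Since $A+Q'\subseteq 3A-2A$ we get $|A+Q'|\le K^5|A|\le C(K)|Q'|$, so Ruzsa's covering lemma produces a set $F$ with $|F|\le C(K)$ and $A\subseteq F+(Q'-Q')$. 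Now $Q'-Q'$ is a progression of dimension $O(\log K)$ and size $\le C(K)|A|$, covered together with $A$ by $|F|\le C(K)$ of its translates — this is the asserted commensurability. (If one insists on placing $A$ inside a single progression of logarithmic dimension, one absorbs $F$ by noting $F\subseteq 3A-2A$ itself has bounded doubling and feeding it back through the argument, as in Green--Ruzsa, without spoiling the $O(\log K)$ bound.)

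\textbf{Main obstacle.} The crux is Chang's spectral lemma in the second step: the passage from the trivial polynomial bound on the rank of the large spectrum to the logarithmic one is precisely where genuine Fourier analysis (Rudin's inequality / the behaviour of dissociated characters) is indispensable, and it is the only non-elementary ingredient. The secondary difficulty is the bookkeeping in the last step — carrying the logarithmic dimension through the covering argument rather than paying for $F$ with extra dimensions.
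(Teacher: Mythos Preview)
Your proposal is correct and is precisely the standard Ruzsa--Chang argument. The paper does not supply its own proof of this theorem: it only remarks that the proof is ``based on harmonic analysis on $\Z/p\Z$ for some suitably chosen prime $p$'' and refers the reader to \cite{bilu}, \cite{tao-vu} and \cite{granville} for complete treatments --- exactly the route (Ruzsa modeling to a prime cyclic group, Bogolyubov with Chang's spectral lemma, geometry of numbers, Ruzsa covering) that you outline.
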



Here we used the following terminology:

\begin{definition}[K-commensurability] Two finite subsets $A$ and $B$ of an ambient group $G$ are called $K$-commensurable if
$$|A \cap B| \geq \frac{1}{K} \max\{|A|,|B|\}.$$
\end{definition}

and

\begin{definition}[Multidimensional arithmetic progression] A $d$-dimensional arithmetic progression in an ambient group $G$ is a finite subset of the form $P:=\pi(B)$, where $\pi: \Z^d \to G$ is a group homomorphism and $B:=\prod_{i=1}^d [-L_i,L_i]$ is a ``box'' with side-lengths $L_i \in \N$.
\end{definition}

\bigskip

\noindent {\bf Remarks:}

a) It is easy to see that $P$ as above satisfies $|PP| \leq 2^d |P|$ because $B+B$ is contained in at most $2^d$ translates of $B$.

b) The proof of the Freiman-Ruzsa-Chang theorem is based on harmonic analysis on $\Z/p\Z$ for some suitably chosen prime $p$. We refer the reader to \cite{bilu}, \cite{tao-vu} and \cite{granville} for nice accounts with proofs. The constant $C(K)$ given by these authors was historically of order $e^{O(K^{O(1)})}$. Tom Sanders recently gave much better, almost polynomial, bounds. See \cite{sanders-survey} for a recent survey. Conjecturally (``Polynomial Freiman-Ruzsa conjecture'', see \cite{green-polynomial}) one ought to be able to take $C(K)$ to be of order $O(K^{O(1)})$.

In this theorem and its applications, the quality of the bounds (i.e. their dependence on $K$) are important.\\

\bigskip

Non-commutative analogues of the above results have started to draw the attention of a number of people in the past ten years. For example, the sum-product theorem can be reformulated as a Freiman type theorem in the affine group of the line (see \cite{helfgott-sl3}). The main goal of these lectures is to present a generalization of Freiman's theorem to arbitrary groups, following \cite{hrushovski} and \cite{bgt}. See \cite{bgt-survey} for another survey on this topic. Proofs were completely avoided in \cite{bgt-survey}.  Our intent in these lectures is not to provide a complete treatment, but rather to give some ideas regarding the proofs, a rather complete outline and even the details of some steps, while remaining as informal as possible. All details can be found in the original paper \cite{bgt} and full proofs are also given in Tao's recent book \cite{tao-hilbert}. For yet another account, we recommend Van den Dries' Bourbaki report \cite{DriesBourbaki}. Also part of our focus here is also  on the geometric applications, especially in the last lecture, where the main ones are given full proofs.

\bigskip

We refer the reader to \cite{Kemp, Frei, BF, HLS, Zem, Ham} for some older results about the general non-commutative case, mostly dealing with the small $K$ regime. In the arbitrary $K$ regime in the non-commutative context, early papers include \cite{elekes-kiraly,elon,tao, helfgott-sl2}. One of the first results in this direction is due to Razborov \cite{razborov} and deals with finite subsets of the free group.  Razborov proved that if $F$ is a non-abelian free group and $A \subset F$ a finite subset, then $$|AAA| \geq |A|^2/(\log |A|)^c$$ for some positive $c$ independent of $A$ and $F$, unless $A$ lies in a cyclic subgroup. The exponent $2$ is best possible here, as is clear by considering for example $A:=\{x\}\cup \{y^i ; -N<i<N\}$ in $F_2:=\langle x,y\rangle$.

S. R. Safin recently improved this and showed:

\begin{theorem}[Safin \cite{safin}] There is $c\in (0,1)$ such that if $F$ is a free group, then for every $n \geq 3$ and every finite subset $A$ in $F$
$$|A^n| \geq c^n|A|^{[\frac{n+1}{2}]}$$
unless $A$ generates a cyclic group.
\end{theorem}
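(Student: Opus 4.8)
The plan is to reduce the statement to a single ``two--step growth'' inequality with an absolute constant, and then to prove that inequality using the fact that $F$ acts on a tree.

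\emph{Reductions.} The set $A^n$ is the same whether computed inside $F$ or inside the subgroup $\langle A\rangle$, and $|(gAg^{-1})^n|=|gA^ng^{-1}|=|A^n|$ for every $g$, so I would assume from the start that $F=\langle A\rangle$ — hence $F$ is free of finite rank $\ge 2$ — and I would feel free to replace $A$ by a conjugate. If $e\in A$ one may delete it, which changes $|A|$ by at most a factor $2$ and so the desired bound by at most $2^{-n}$. I picture elements of $F$ as reduced words, i.e.\ as vertices of the Cayley tree, so that a product $a_1\cdots a_n$ is reduced apart from cancellation at the $n-1$ junctions. The configuration to keep in mind is the extremal one $\{x\}\cup\{y^i:|i|<N\}$ from the discussion above: there every maximal run of ``$y$--powers'' inside a product collapses to one power, so a product of $n$ factors can record at most $\lceil n/2\rceil$ independent coordinates; the content of the theorem is that nothing worse than this happens.

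\emph{Reduction to two--step growth.} Right multiplication by a fixed element of $A$ is injective, so $|XA|\ge|X|$, hence $|A^{m+1}|\ge|A^m|$ for all $m$. Since $\lceil (n+2)/2\rceil=\lceil n/2\rceil+1$, it is enough to produce an absolute constant $c_0\in(0,1)$ such that
\[
|A^{n+2}|\ \ge\ c_0\,|A|\,|A^{n}|\qquad\text{for every }n\ge 1\text{ and every non--cyclic }A .
\]
Indeed, iterating this over odd exponents gives $|A^{2k+1}|\ge c_0^{\,k}|A|^{k+1}$, and then $|A^{2k}|\ge|A^{2k-1}|\ge c_0^{\,k-1}|A|^{k}$; both are of the asserted shape with $c=c_0$. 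The case $n=1$ of the displayed inequality is a (logarithm--free) form of Razborov's cubic bound $|A^3|\gtrsim|A|^2$, so this improvement of Razborov's estimate has to be built into the argument.

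\emph{Proving the two--step estimate.} I would attack the displayed inequality in one of two dual ways. The soft route is a fibre count for the map $A\times A^n\times A\to A^{n+2}$, $(a,x,a')\mapsto axa'$: a coincidence $axa'=byb'$ forces $y=b^{-1}axa'(b')^{-1}$, so by Cauchy--Schwarz it suffices to bound the number of $5$--tuples $(a,x,a',b,b')\in A^2\times A^n\times A^2$ with $b^{-1}axa'(b')^{-1}\in A^n$ by $O(|A|^{3}|A^n|)$, equivalently to bound $\sum_{a,b,a',b'\in A}\bigl|\,A^n\cap(a^{-1}b)\,A^n\,(b'a'^{-1})\,\bigr|$ by $O(|A|^3|A^n|)$. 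Here is where the free--group geometry enters: a finite set $X$ in a free group can meet a two--sided translate $uXv$ in a large subset only when $X$ is strongly aligned along a cyclic axis compatible with both $u$ and $v$, so the estimate reduces to the assertion that $A^n$ has \emph{small cyclic concentration} — a spread--out property that one has to propagate along the induction and that ultimately reflects the non--cyclicity of $A$. The hands--on route is a direct construction: after conjugating, pick a cyclically reduced $g$ for which $\langle g\rangle$ contains a maximal proportion of $A$; if that proportion lies below a fixed threshold the set is dispersed and one interleaves a bounded number of ``independent'' elements of $A$ to obtain growth well above $|A|^{\lceil n/2\rceil}$, while if it is large one fixes a transverse separator $s\in A\setminus\langle g\rangle$ and forms the alternating reduced words $g^{i_1}s\,g^{i_2}s\cdots$ with $g^{i_j}\in A$, invoking the bounded--cancellation lemma for free groups (the cancellation in $g^i s$ is bounded independently of $i$) to certify that $\gtrsim|A|$ of the cyclic blocks survive distinctly and hence that $A^n$ contains $\gtrsim|A|^{\lceil n/2\rceil}$ elements directly.

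\emph{Main obstacle.} The real difficulty is uniformity. The bookkeeping of junction--cancellations must be propagated through an $n$--fold product with a loss of only a bounded factor \emph{per factor}, so that the total loss is $c^n$ and is independent of the rank of $F$ and of the word lengths occurring in $A$; and one must land on the exponent \emph{exactly} $\lceil n/2\rceil$, which is a parity--sensitive count — one has to verify that a maximal run of ``cyclic--cluster'' factors really collapses to a single recorded coordinate, so that the construction neither overreaches nor loses a stray factor of $|A|$. Making the bounded--cancellation estimates quantitative in the intermediate regimes — $A$ spread over two cyclic axes of very different sizes, or a separator $s$ that cancels heavily against $g$ — and tuning the dichotomy threshold so that the two cases dovetail, is the technical core of the proof.
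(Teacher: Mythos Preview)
The paper does not prove this theorem: it is quoted as a result of Safin \cite{safin}, with only the remark that one may take $c=\tfrac{1}{62}$. So there is no proof in the paper to compare your attempt against.

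On the substance of what you wrote: your reduction to a single two--step inequality
\[
|A^{n+2}|\ \ge\ c_0\,|A|\,|A^n|
\]
is correct and is exactly the backbone of Safin's argument; the induction you describe recovers the exponent $\lceil\tfrac{n+1}{2}\rceil$ with the right constant shape. However, what you have written is an outline, not a proof. The entire difficulty of Safin's paper is precisely the two--step inequality (with an absolute $c_0$, uniform in $n$, in the rank of $F$, and in the word--lengths in $A$), and you have not proved it: you sketch two plausible strategies --- a Cauchy--Schwarz fibre count reducing to control of two--sided translate intersections, and a direct bounded--cancellation construction along a dominant cyclic axis with a transverse separator --- and you correctly flag the uniformity and the dichotomy--threshold tuning as the core obstacles, but neither route is carried through. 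In particular, the ``soft route'' assertion that $\sum_{a,b,a',b'}|A^n\cap(a^{-1}b)A^n(b'a'^{-1})|=O(|A|^3|A^n|)$ is not obviously true without further structural input on $A^n$, and the ``hands--on route'' needs a genuine case analysis (this is what occupies Safin). Your diagnosis of the extremal example and of why the exponent is parity--sensitive is accurate; what is missing is the actual work.
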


Safin's proof gives an explicit lower bound on $c$. One can take $c=\frac{1}{62}$. Again the same example shows that the exponent is sharp for each $n$. See \cite{button} for further generalizations of the Razborov-Safin results to free products and HNN extensions.
\bigskip

\noindent {\bf Remark:} When $G$ is an abelian group, then $|AA| \leq K|A|$ implies that $|A^n| \leq O_{K,n}(1) |A|$ for every $n\geq 2$, according to the Pl\"unnecke-Ruzsa inequalities (see \cite{tao-vu}). This is still the case in the non-abelian context, but when starting with the assumption $|AAA| \leq K|A|$ (see \S \ref{tool} below). A set $A$ consisting of a subgroup $H$ and another element $x$ with $H \cap xHx^{-1}=\{1\}$ yields an example with small $|AA|$ and large $|AAA|$. This justifies the consideration of three-fold product sets in non-commutative groups. See \S \ref{tool} below for more on this issue.

\subsection{The Freiman inverse problem}
The general problem (``Freiman inverse problem'' as advertised in particular by T. Tao in his blog several years ago) is to describe the structure of large subsets $A$ of an arbitrary ambient group $G$ that satisfy $|AA|\leq K|A|$ for some fixed constant $K$.

The value $|AA|/|A|$ is usually called the \emph{doubling constant} of $A$, equivalently we say that $A$ has \emph{doubling at most $K$}.

For example we easily have:

\begin{proposition}\label{extcase} Let $G$ be an arbitrary group. Suppose $|AA|=|A|$. Then there is a finite subgroup $H$ of $G$ and $A=aH=Ha$ for every $a \in A$.
\end{proposition}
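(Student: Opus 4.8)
The plan is to show that $|AA| = |A|$ forces $A$ to be a coset of a finite subgroup. First I would fix an element $a \in A$ and consider the left translate $a^{-1}A$; since left translation is a bijection, $|a^{-1}A| = |A|$ and $|(a^{-1}A)(a^{-1}A)| = |a^{-1}(Aa^{-1})A|$, which is not quite the product set I want. So instead I would work directly with $A$ and exploit that for a fixed $a \in A$, the set $aA$ is contained in $AA$ and has the same cardinality as $A$, hence $|aA| = |A| = |AA|$; since $aA \subseteq AA$ and both are finite of the same size, we get $aA = AA$. Symmetrically, $Aa = AA$ for every $a \in A$. Thus $aA = Aa = AA =: B$ is independent of the choice of $a \in A$.

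Next I would identify $H := a^{-1}B = a^{-1}AA$ and show it is a subgroup; equivalently, set $H := a^{-1}A$, so that $A = aH$. From $aA = AA$ and $A = aH$ we get $a(aH) = (aH)(aH)$, and also $Aa = AA$ gives $(aH)a = (aH)(aH)$. The key point is to check that $H$ contains the identity (clear, since $a \in A$ means $e = a^{-1}a \in a^{-1}A = H$) and is closed under multiplication and inverses. Closure under products should follow from $AA = aA$: indeed $A \cdot A = aA$ translates, after multiplying on the left by $a^{-1}$ and using $A = aH$ and also the right-sided identity $Aa = AA$, into $HH \subseteq H$. Combined with $|H| = |A| < \infty$, a finite multiplicatively closed subset containing $e$ in a group is automatically a subgroup (the map $h \mapsto h_0 h$ is an injection $H \to H$, hence a bijection, so $e$ has a preimage and inverses exist). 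Finally, from $Aa = AA = aA$ for all $a \in A$ one deduces $Ha = aH$ type relations that upgrade to $A = aH = Ha$ for every $a \in A$.

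I would expect the main obstacle to be the bookkeeping of left versus right translates: the hypothesis $|AA| = |A|$ is symmetric, but extracting a two-sided coset structure requires carefully combining the left identity $aA = AA$ with the right identity $Aa = AA$, and making sure the resulting subgroup $H$ is the same whether defined as $a^{-1}A$ or $Aa^{-1}$. The cleanest route is probably: (1) prove $aA = AA = Aa$ for all $a \in A$; (2) deduce $a^{-1}A = b^{-1}A$ for all $a, b \in A$ by writing $a^{-1}A = a^{-1}(AA)b^{-1} \cdot \ldots$; actually more simply, from $bA = AA = aA$ we immediately get $a^{-1}A = a^{-1}(aA)\cdot$ wait — rather, $aA = bA$ gives $A = a^{-1}bA$, and iterating shows $a^{-1}b$ stabilizes $A$ under left multiplication. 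Then $H := \{g \in G : gA = A\}$ is visibly a subgroup (stabilizers always are), it is finite since it injects into $A$ via $g \mapsto ga_0$, and $a^{-1}A \subseteq H$ because $(a^{-1}b)A = A$ for all $b \in A$; counting gives $a^{-1}A = H$, i.e. $A = aH$. The same argument on the right gives $A = Ha'$ for the right stabilizer; identifying the two finite subgroups and checking $aH = Ha$ finishes the proof. The genuinely delicate part is verifying all the cancellations are legitimate and that no hypothesis beyond $|AA| = |A|$ (in particular no a priori finiteness of a subgroup) is secretly used.
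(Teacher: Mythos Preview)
Your stabilizer approach is correct and, once the false starts are stripped away, arguably cleaner than the paper's argument. The essential steps are: (i) from $aA \subseteq AA$ and $|aA| = |A| = |AA|$ conclude $aA = AA$, and symmetrically $Aa = AA$; (ii) since then $aA = bA$ for all $a,b \in A$, every $a^{-1}b$ lies in the left stabilizer $H := \{g : gA = A\}$, which is a subgroup; the injection $H \hookrightarrow A$, $g \mapsto ga$, together with $a^{-1}A \subseteq H$ forces $A = aH$; (iii) from $aA = Aa$ and $A = aH$ one gets $a^2H = aHa$, hence $aH = Ha$. Your worry that step (iii) is the ``genuinely delicate part'' is misplaced: it is a one-line cancellation once (i) and (ii) are in hand, and it immediately identifies the left and right stabilizers.

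The paper proceeds differently. It sets $H := Aa^{-1}$, notes $1 \in H$ so $A \subseteq AH$, and uses $|AH| = |AHa| = |AA| = |A|$ to get $A = AH$; iterating gives $A = AH^n$, so the semigroup $\bigcup_n H^n$ sits inside $a^{-1}A$, is therefore finite, and hence is a group --- forcing $\langle H\rangle = H$. The normalization $a^{-1}Ha = H$ is then extracted from $|a^{-1}HaH| = |H|$ via the bijection $a^{-1}HaH/H \simeq a^{-1}Ha/(a^{-1}Ha \cap H)$. Your route bypasses both the finite-semigroup trick and the coset-counting for normality by exploiting the two-sided identity $aA = AA = Aa$ from the start; the paper's route, in exchange, never needs to name a stabilizer and illustrates the ``iterate until the semigroup closes up'' idea that recurs elsewhere in additive combinatorics.
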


\begin{proof}Pick $a \in A$ and set $H:=Aa^{-1}$. The set $H$ contains the identity, so $A \subset AH$. But $|AH|=|AHa|=|AA|=|A|$, hence $A=AH$. Iterating $A=AH^n$ for every $n\geq 1$ and it follows that the semigroup $\cup_n H^n$ is finite and contained in $a^{-1}A$. But a finite semigroup inside a group is a finite group. So the subgroup generated by $H$ satisfies $\langle H \rangle \subset a^{-1}A$. In particular $|\langle H \rangle| \leq |A|=|H|$. Hence $\langle H \rangle=H$. It follows that $A=Ha$ and that $H=AA^{-1}$. Finally $|H|=|A|=|AA|=|HaHa|=|HaH|=|a^{-1}HaH|$. But $a^{-1}HaH/H \simeq a^{-1}Ha/(a^{-1}Ha \cap H)$. Hence $a^{-1}Ha = H$ and we are done.
\end{proof}

Slightly trickier is the following:

\begin{proposition}\label{freimanlem} Let $G$ be an arbitrary group. Suppose $|AA|\leq 1.1|A|$. Then there is a finite subgroup $H$ of $G$ and $a \in A$ such that $aHa^{-1}=H$, $|H|\leq 1.2|A|$ and $A \subset aH$.
\end{proposition}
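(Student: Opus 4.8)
The plan is to take $H:=A^{-1}A$, to prove that $H$ coincides with $AA^{-1}$ and is a subgroup, and then to check that it does the job with $a$ equal to any element of $A$. Everything rests on one soft consequence of the hypothesis: if $g=a^{-1}b$ with $a,b\in A$, then $aA$ and $bA$ are $|A|$-element subsets of $AA$, so $|aA\cap bA|\ge 2|A|-|AA|\ge 0.9|A|$, and left-translating by $a^{-1}$ gives $|gA\cap A|\ge 0.9|A|$ for every $g\in A^{-1}A$. The mirror statement, obtained from the right translates $Aa,Ab\subseteq AA$, is that $|Ah\cap A|\ge 0.9|A|$ for every $h\in AA^{-1}$.

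Next I would show $A^{-1}A=AA^{-1}=:H$ is a subgroup. Given $g,h\in A^{-1}A$, the first estimate gives $|hA\cap A|\ge 0.9|A|$, hence $|ghA\cap gA|\ge 0.9|A|$, and also $|gA\cap A|\ge 0.9|A|$; since these are two subsets of the $|A|$-element set $gA$, each of size at least $0.9|A|$, and $0.9|A|+0.9|A|-|A|=0.8|A|>0$, they meet, so $ghA\cap A\ne\emptyset$ and therefore $gh\in AA^{-1}$. Thus $(A^{-1}A)(A^{-1}A)\subseteq AA^{-1}$; as $e\in A^{-1}A$ this yields $A^{-1}A\subseteq AA^{-1}$, and the symmetric argument gives the reverse inclusion. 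So $H$ is finite, symmetric, contains $e$, and satisfies $HH=(A^{-1}A)(A^{-1}A)\subseteq AA^{-1}=H$; hence it is a finite subgroup.

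It remains to place $A$ in a coset, verify normalization, and bound $|H|$. For every $a\in A$ we have $a^{-1}A\subseteq A^{-1}A=H$ and $Aa^{-1}\subseteq AA^{-1}=H$, i.e.\ $A\subseteq aH$ and $A\subseteq Ha$. From the second, $a^{-1}A\subseteq a^{-1}Ha$, whence $H=(a^{-1}A)^{-1}(a^{-1}A)\subseteq a^{-1}Ha$ (the latter being a subgroup); comparing cardinalities, $aHa^{-1}=H$. Finally, for $h\in H$ the number of pairs $(a,b)\in A\times A$ with $a^{-1}b=h$ equals $|A\cap Ah^{-1}|$, which is $\ge 0.9|A|$ by the right-translate estimate applied to $h^{-1}\in AA^{-1}$. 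Summing over $h\in H$ and using that $a^{-1}b$ always lies in $A^{-1}A=H$, we get $0.9|A|\cdot|H|\le|A|^2$, so $|H|\le \tfrac{10}{9}|A|<1.2|A|$.

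The single point that needs care is the middle step: the argument closes only because the left-translate inequality (for $g\in A^{-1}A$) and the right-translate inequality (for $h\in AA^{-1}$) interlock to collapse $A^{-1}A$ and $AA^{-1}$ into a common subgroup; keeping track of which side one translates on is the only subtlety, and the rest is elementary counting. One could instead estimate $|H|=|A^{-1}A|$ by Ruzsa's triangle inequality, but that gives only $|A^{-1}A|\le(1.1)^2|A|=1.21|A|$, just short of the claim, which is why the direct count is preferable; note also that the argument works verbatim whenever $|AA|<\tfrac32|A|$, with the bound $|H|\le(2-|AA|/|A|)^{-1}|A|$.
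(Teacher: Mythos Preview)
Your proof is correct and follows essentially the same route as the paper: both define $H=AA^{-1}=A^{-1}A$, use the overlap estimate $|gA\cap A|\ge 0.9|A|$ for $g\in A^{-1}A$ (and its mirror), and close $HH\subseteq H$ by a two-step triangle/pigeonhole argument, then bound $|H|$ by the same double-count of representations $a^{-1}b=h$. The only cosmetic difference is that the paper phrases the overlap and triangle steps in the $\ell^1$-language $\|1_A-1_{gA}\|_1=2(|A|-|gA\cap A|)$, whereas you work directly with intersection sizes; your remark that the argument extends to $|AA|<\tfrac32|A|$ also matches the paper's follow-up observation.
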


\begin{proof} Let $1_A$ be the indicator function of $A$ and $\|f\|_1 = \sum_{g \in G} |f(g)|$ the $\ell^1$-norm. We will prove that $H:=AA^{-1}$ is stable under multiplication using the following observation: if $||1_A - 1_{gA}||_1 < 2|A|$, then $g \in H$.
First we show that $H=A^{-1}A$ and that $|H|\leq 1.2|A|$.

 Note that $\forall x,y \in G$, $||1_{xA} - 1_{yA}||_1 = |xA \Delta yA| = 2(|A| - |xA \cap yA|)$. But if $x,y \in A$, then $|xA \cup y A|= 2|A|- |xA \cap y A| \leq |A^2|\leq 1.1|A|$. Hence  $\forall x,y \in A$,  $|xA \cap y A| \geq 0.9|A|$ and
\begin{equation}\label{first}
||1_{A} - 1_{x^{-1}yA}||_1  \leq 0.2|A|.
\end{equation}
Since $|xA \cap y A|>0$ we conclude that $xa=yb$ for some $a,b \in A$, i.e. $x^{-1}y \in AA^{-1}$.  Hence $A^{-1}A=H=AA^{-1}$. In fact $xa=yb$ for at least $0.9|A|$ pairs $(a,b)$ in $A \times A$. Therefore $|H|=|A^{-1}A|\leq |A|^2/0.9|A| \leq 1.2|A|$.

To see that $HH \subset H$ we write:
$$||1_{A} - 1_{z^{-1}wx^{-1}yA}||_1 \leq ||1_{A} - 1_{z^{-1}wA}||_1  + ||1_{z^{-1}wA} - 1_{z^{-1}wx^{-1}yA}||_1,$$ we get from $(\ref{first})$
$$||1_{A} - 1_{z^{-1}wx^{-1}yAA}||_1  \leq 0.4|A|$$
for every $x,y,z,w \in A$. Hence $HH \subset H$. Hence $H$ is a subgroup and $A \subset Ha$ for all $a \in A$, so $H=A^{-1}A \subset a^{-1}Ha$ and $a$ normalizes $H$.
\end{proof}

\begin{remark} The above  proof can be pushed with little effort to yield that if $|AA|< \frac{3}{2}|A|$, then $H=AA^{-1}=A^{-1}A$ is a finite subgroup of size $<\frac{3}{2}|A|$ which is normalized by $A$. This is a theorem of G. Freiman, see \cite{freiman2}. Clearly $3/2$ is sharp, take $A=\{0,1\} \in \Z$.\\
\end{remark}

\begin{remark}\label{diambound} An immediate consequence of the above lemma is the following general fact: if $S$ is a symmetric generating subset of a finite group $G$ with $|S| \geq \alpha |G|$ for some $\alpha \in (0,1)$, then $S^n=G$ for some $n \leq O(\log(\frac{1}{\alpha}))$. Indeed $|S^{2^n}| > 1.1^n|S|$ unless $|S^{2^{n-1}}| \geq |G|/1.2 > |G|/2$, in which case $S^{2^n}=G$.
\end{remark}

\begin{remark} One may ask: for which values of $K \geq 1$ is it true that the condition $|AA|\leq K|A|$ implies that $A$ is contained in boundedly many (in terms of $K$) cosets of a finite subgroup ? The answer is : this holds for all $K<2$. This is due to Yayha Hamidoune. He showed that $A$ is then contained in at most $\frac{1}{2-K}$ cosets of a subgroup of size at most $|A|$. See  \cite[Prop. 4.1]{hamidoune}. Clearly it is sharp because the arithmetic progressions $A:=\{0,1,\ldots,N\} \subset \Z$ have $|A+A|\leq 2|A|$.
\end{remark}

What if we increase the parameter $K$ beyond the $K=2$ threshold ? The next natural question to ask is: for which values of $K$ is it true that the condition $|AA| \leq K|A|$ implies that $A$ is contained in an extension $HL$, where $H$ is a finite subgroup normalized by $L$ a $d$-dimensional arithmetic progression ? Clearly such a set has doubling at most $2^d$.

However this question is ill-posed, because there are sets of doubling say $2^4$ which are not of this form: take a ball $S^n$ of large radius $n$ in the discrete Heisenberg group\\

$$H_3(\Z):= \big\{ \left(
               \begin{array}{ccc}
                 1 & x & z \\
                 0 & 1 & y \\
                 0 & 0 & 1 \\
               \end{array}
             \right)
             ; x,y,z \in \Z \big\}$$\\

Here $4$ is the homogeneous asymptotic dimension of the Heisenberg group, namely $|S^n| \simeq n^4$ up to multiplicative constants, for any given finite generating set $S$ of $H_3(\Z)$.\\

In 2008 Elon Lindenstrauss proposed \cite{elonpers} a general conjectural statement, which was subsequently proved by Ben Green and Terry Tao and the author in \cite{bgt}, and is the main subject of these notes:

\begin{theorem}[Structure theorem, weak version]\label{BGT} Let $G$ be a group, $K \geq 1$ a parameter and $A$ a finite subset with $|AA|\leq K|A|$. Then there is a coset of a virtually nilpotent subgroup of $G$ which intersects $A$ in a subset of size $\geq |A|/C(K)$.
\end{theorem}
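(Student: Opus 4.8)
# Proof Proposal

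\textbf{Overall strategy.} The plan is to pass from the combinatorial hypothesis on $A$ to a problem about a \emph{locally compact group}, where the Gleason--Yamabe theorem on Hilbert's 5th problem can be brought to bear. The mechanism for this passage is the model-theoretic \emph{ultraproduct} construction of Hrushovski, which converts a sequence of finite sets with bounded doubling into a single infinite object carrying a translation-invariant Keisler measure; a quotient of this object is a locally compact group. One then analyzes the structure of that locally compact group and transports the conclusion back to $A$ by a compactness/overspill argument. I would organize the argument as follows.

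\textbf{Step 1: Reduce to approximate groups and set up the ultraproduct.} First I would recall that if $|AA|\le K|A|$, then (after replacing $A$ by $A\cup A^{-1}\cup\{e\}$ and passing to $A^2$, at the cost of worsening $K$) one may assume $A$ is a \emph{$K$-approximate group}: a symmetric set containing the identity with $A^2\subseteq XA$ for some $X$ of size $\le K$. Suppose the theorem fails: there is a fixed $K$ and a sequence of finite approximate groups $A_n$ in groups $G_n$ such that no coset of a virtually nilpotent subgroup meets $A_n$ in more than $|A_n|/n$ elements. Fix a non-principal ultrafilter and form the ultraproduct $G:=\prod_{n}G_n$ and the ``ultra approximate group'' $\mathbf A:=\prod_n A_n$. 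The normalized counting measures on $A_n^4$ give, via Łoś's theorem, a finitely additive $\{0,1\}$-valued-up-to-scaling \emph{Keisler measure} $\mu$ on the definable subsets of $\mathbf A^4$, invariant under left translation by $\mathbf A$.

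\textbf{Step 2: Extract a locally compact model group.} The heart of the argument is Hrushovski's Lie Model Theorem: inside $\mathbf A^4$ there is a definable (in fact type-definable, an intersection of definable sets $\bigcap_k Y_k$ with $\mathbf A\subseteq Y_{k+1}^2\subseteq Y_k$) set $\mathbf O$ which is a subgroup ``at infinitesimal scale'', and the quotient $L:=\langle\mathbf O\rangle / \bigcap_k Y_k$ — more precisely $\mathbf O_1/\mathbf O_\infty$ where $\mathbf O_\infty$ is the smallest type-definable subgroup of bounded index — is a \emph{locally compact, compactly generated} topological group, with $\mathbf A$ mapping onto a relatively compact open neighborhood of the identity, and the pushforward of $\mu$ giving a (left-invariant, hence Haar) measure. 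The measure-theoretic work here is Hrushovski's ``stabilizer theorem'', a nonstandard analogue of the Balog--Szemerédi--Gowers/Croot--Sisask machinery: one shows the stabilizer of $\mu$ restricted to a positive-measure type-definable set has bounded index.

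\textbf{Step 3: Apply Gleason--Yamabe and identify a nilpotent structure in $L$.} By the Gleason--Yamabe theorem, the compactly generated locally compact group $L$ has an open subgroup $L'$ possessing compact normal subgroups $N$ with $L'/N$ a connected Lie group of dimension bounded in terms of $K$. A connected Lie group, near the identity, is governed by its Lie algebra; using the Jordan--Schur theorem for the compact/finite pieces and the Lie-algebra structure theory (Levi decomposition, plus the fact that the relevant neighborhoods sit inside a \emph{nilpotent} subgroup once one descends far enough in the lower central series — this is where the ``no polynomial growth faster than nilpotent'' phenomenon enters, via escape-of-subgroups arguments à la Gleason), one produces inside $L'$ an open subgroup that is \emph{virtually nilpotent} and of positive Haar measure intersection with the image of $\mathbf A$. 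I would also invoke the Tits alternative / growth dichotomy only implicitly: the point is that a bounded-dimensional connected Lie group contains a co-compact virtually nilpotent (even solvable, but we need nilpotent) open-near-identity structure capturing a definite proportion of the image of $\mathbf A$.

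\textbf{Step 4: Pull back.} The preimage in $G$ of the virtually nilpotent open subgroup found in Step 3 contains a $\mu$-positive-measure, type-definable, hence (by saturation) definable subset $\mathbf V\subseteq\mathbf A^4$ lying in a single coset of a virtually nilpotent subgroup of $G$, with $\mu(\mathbf V)\ge\eta$ for some $\eta=\eta(K)>0$. Since $\mathbf V$ is definable and $G,\mathbf A$ are ultraproducts, $\mathbf V=\prod_n V_n$ with $V_n$ contained in a coset of a virtually nilpotent subgroup of $G_n$ and $|V_n\cap A_n^4|\ge \tfrac{\eta}{2}|A_n|$ for ultra-all $n$. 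Using the Ruzsa covering lemma and the fact (from \S\ref{tool}, i.e.\ the non-abelian Plünnecke--Ruzsa inequalities applied to $A_n^4$) that $|A_n^4|\le K^{O(1)}|A_n|$, one concludes $V_n\cap A_n$ — or rather a single translate of it inside $A_n$ — has size $\ge |A_n|/C(K)$, contradicting the choice of the sequence. This yields the theorem with $C(K)$ the (ineffective) constant coming from compactness.

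\textbf{Main obstacle.} The serious difficulty is Step 2--Step 3: constructing the locally compact model group and extracting finite-dimensionality. The model-theoretic stabilizer argument that produces a genuine \emph{group} from the approximate group together with the invariant measure is delicate — it is precisely Hrushovski's main innovation — and then the descent from Gleason--Yamabe's abstract statement to a concrete \emph{virtually nilpotent} neighborhood requires the Lie-theoretic escape arguments controlling how neighborhoods generate the group. By contrast Steps 1 and 4 are bookkeeping (Plünnecke--Ruzsa, covering lemmas, Łoś's theorem). A secondary caveat: this route is inherently \emph{ineffective} in $K$; obtaining explicit $C(K)$ would require replacing the ultraproduct by a quantitative finitary version, which is a substantially harder matter not attempted here.
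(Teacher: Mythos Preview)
Your overall architecture---ultraproduct to get an ultra approximate group, Hrushovski's construction of a locally compact model $L$, Gleason--Yamabe to reduce to a Lie group, then pull back by saturation/\L o\'s---matches the paper's Steps~1 and~3. The genuine gap is your Step~3.

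You assert that ``a bounded-dimensional connected Lie group contains a co-compact virtually nilpotent open-near-identity structure capturing a definite proportion of the image of $\mathbf A$'', invoking Levi decomposition, Jordan--Schur, and ``escape-of-subgroups arguments \`a la Gleason'' applied to $L$. This is where the argument breaks. A general connected Lie group---think $\mathrm{SL}_2(\R)$---has \emph{no} virtually nilpotent open subgroup, and nothing you have said rules out such an $L$ arising as the model. The paper is explicit that combining Hrushovski's model theorem with Gleason--Yamabe \emph{as a black box} yields only Hrushovski's weaker structure theorem, not Theorem~\ref{BGT}. The Gleason escape arguments applied inside $L$ are exactly what Gleason--Yamabe already encapsulates; they do not by themselves force nilpotence.

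What the paper actually does (its Step~2, which it identifies as the main novelty over Hrushovski) is to transplant the Gleason--Yamabe escape-norm and commutator-shrinking lemmas \emph{from the locally compact group to the finite approximate groups $A_n$ themselves}. After passing to a ``strong approximate subgroup'' $A'_n$ (whose existence uses the Lie model), one proves for $g,h\in A'_n$ that $\|[g,h]\|_{A'_n}\le C\|g\|_{A'_n}\|h\|_{A'_n}$. From this one extracts, inside each $A_n$, a finite normal subgroup $H_n$ (the elements of zero escape norm) and an element $a_n$ central modulo $H_n$. In the ultraproduct this produces a genuine central one-parameter subgroup of $L$; quotienting by it drops $\dim L$, and induction on $\dim L$ eventually forces $L$ to be (simply connected) nilpotent. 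That inductive mechanism, driven by the approximate-group commutator estimate, is the missing idea in your proposal.
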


\bigskip

\noindent {\bf Remarks.}

a) Here $C(K)$ is a number depending on $K$ only, and not on $A$ nor $G$.\\

b) The proof also gives a uniform bound on the nilpotency class and the number of generators of the nilpotent subgroup (both in $O(\log K)$ with an absolute implied constant), although no bound can be expected on the index (e.g. take $A$ a large finite simple group). More precisely we can take the virtually nilpotent group to be a finite-by-nilpotent group, where the nilpotent quotient satisfies $\sum_i d(C^{i}(N)/C^{(i+1)}(N))=O(\log K)$, where $(C^i(N))_i$ is the lower central series and $d(\cdot)$ the number of generators.\\

c) However the proof gives no explicit bound on $C(K)$. The proof is not effective.\\

d) Stated as above the theorem says nothing about the case when $G$ is finite. And this is enough for a number of geometric applications. However the proof does. We actually show that the virtually nilpotent subgroup can be taken to be finite-by-nilpotent where the finite normal subgroup lies in $XA$ for some $X$ of size $|X|\leq c(K)$ for some constant $c(K)$ depending only on $K$. We will give a stronger form of the theorem below, which includes this refinement.\\

e) The theorem had been established earlier by Tao \cite{tao-solvable} in the special case when $G$ is assumed solvable of bounded derived length. \\

f) In certain situations, most notably when $G$ is a free group (by the result of Razborov mentioned above) or when $G=GL_n(k)$ for some field $k$, then pretty good explicit bounds can be found on $C(K)$. For example in \cite{bgt1} it is shown that $C(K)=O(K^{O_n(1)})$ for $G=GL_n(\C)$. See also \cite{pyber-szabo, gill-helfgott} for the positive characteristic situation.\\


Theorem \ref{BGT} can be seen as a finitary version of Gromov's theorem on groups with polynomial growth. And indeed, as we now show, it easily implies Gromov's theorem.

\begin{theorem}[Gromov's polynomial growth theorem]\label{gromov} Let $\Gamma:=\langle S \rangle$ be a finitely generated group with polynomial growth (i.e. $|S^n| =O(n^D)$ for some $D>0$ as $n \to +\infty$). Then $\Gamma$ is virtually nilpotent.
\end{theorem}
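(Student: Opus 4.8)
The plan is to deduce Gromov's theorem from Theorem~\ref{BGT} by a pigeonhole/iteration argument on the balls $S^n$. First I would use the polynomial growth hypothesis $|S^n| = O(n^D)$ to find, for infinitely many $n$, a scale at which the ball does not grow too fast: a standard averaging shows that if $|S^n|\leq Cn^D$ for all $n$, then there cannot exist an $n$ with $|S^{2n}| > K|S^n|$ for every dyadic scale, so for any fixed $K > 2^{D+1}$ there are infinitely many $n$ with $|S^{2n}| \leq K|S^n|$ (otherwise $|S^{2^k}| \geq K^{k-k_0}|S^{2^{k_0}}|$ grows faster than any polynomial). Setting $A := S^n$ for such an $n$ gives $|AA| = |S^{2n}| \leq K|A|$, so Theorem~\ref{BGT} applies with this \emph{fixed} $K$ (hence fixed $C(K)$).

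Next I would extract the consequence: there is a virtually nilpotent subgroup $H_n \leq \Gamma$ and an element $g_n$ with $|g_n H_n \cap S^n| \geq |S^n|/C(K)$. Since $C(K)$ is an absolute constant independent of $n$, and since the nilpotency class and number of generators of the nilpotent part are bounded by $O(\log K)$ (Remark~b), I would like to conclude that $\Gamma$ itself has a virtually nilpotent subgroup of finite index. The passage from ``a large chunk of one ball lies in a coset of $H_n$'' to ``$H_n$ has finite index in $\Gamma$'' is the crux: from $|g_n H_n \cap S^n| \geq |S^n|/C(K)$ one gets $|H_n \cap S^{2n}| \geq |S^n|/C(K) \geq |S^{2n}|/(KC(K))$, i.e.\ $H_n$ meets the ball $S^{2n}$ in a definite proportion. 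Covering $\Gamma$ by translates of $H_n$ and using that $\Gamma$ has polynomial growth of the \emph{same} degree as $H_n$ would have if it were infinite index (a subgroup of infinite index in a polynomial growth group of degree $D$ has strictly smaller growth, or one argues directly that the number of $H_n$-cosets needed to cover $S^m$ stays bounded as $m\to\infty$), one sees the index $[\Gamma : H_n]$ is finite and in fact bounded by $KC(K)$.

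The main obstacle I anticipate is handling the fact that $H_n$ a priori depends on $n$: we obtain infinitely many virtually nilpotent subgroups, each of index $\leq K C(K)$, and must produce a single finite-index virtually nilpotent subgroup of $\Gamma$. One clean way around this: a group that contains a virtually nilpotent subgroup of index at most a fixed bound $M$ --- even if that subgroup varies --- must itself be virtually nilpotent, because the intersection of all index-$\leq M$ subgroups is a characteristic subgroup of finite index (there are only finitely many subgroups of index $\leq M$ in a finitely generated group, by a theorem of M.~Hall), and this intersection is contained in each $H_n$, hence virtually nilpotent, hence $\Gamma$ is virtually nilpotent. Alternatively, and perhaps more in the spirit of the notes, one can first reduce to the case where $\Gamma$ is finitely presented or use the algebraic refinements in Remark~d, but the finite-index-intersection trick is the most economical. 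I would also need to be slightly careful that $\Gamma$ is finitely generated so that ``finitely many subgroups of bounded index'' applies, which is part of the hypothesis.
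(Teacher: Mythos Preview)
Your approach is essentially the paper's: pigeonhole on the growth to find a ball with bounded doubling, apply Theorem~\ref{BGT}, then argue the resulting virtually nilpotent subgroup has finite index. Two points of comparison are worth making.

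First, your final paragraph is unnecessary. Once you have a \emph{single} virtually nilpotent subgroup $H_n$ of finite index in $\Gamma$, you are done: $\Gamma$ is then virtually (virtually nilpotent), hence virtually nilpotent. There is no need to track infinitely many $H_n$'s or invoke the finiteness of subgroups of bounded index.

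Second, your pigeonhole gives $|S^{2n}|\le K|S^n|$, whereas the paper arranges $|S^{4n}|\le 5^D|S^n|$. This matters for the finite-index step. The paper's clean mechanism is an elementary Schreier-graph lemma (Lemma~\ref{schreier}): if $|S^k\cap H|>|S^{2k}|/C$ and $k\ge C$, then $[\Gamma:H]\le C$. Applied with $k=2n$, this needs a bound on $|S^{4n}|$ in terms of $|S^{2n}\cap H_n|$, which is exactly what the $4n$-scale pigeonhole provides. With only $|S^{2n}|\le K|S^n|$ you get $|S^{2n}\cap H_n|\ge |S^{2n}|/(KC(K))$, but the Schreier argument then bounds the number of cosets in $S^{2n}$ by $|S^{4n}|/|S^{2n}\cap H_n|$, and you have no control on $|S^{4n}|/|S^{2n}|$. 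Your alternative suggestions (growth-degree comparison, or ``the number of $H_n$-cosets meeting $S^m$ stays bounded'') are plausible but vague as stated; the simplest repair is to run the pigeonhole at scale $4$ (or $5$) from the start, after which Lemma~\ref{schreier} closes the argument in one line.
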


\begin{proof}[Proof that Theorem \ref{BGT} implies Theorem \ref{gromov}] Since $|S^n|=O(n^D)$, there must be arbitrarily large $n$'s for which $|S^{4n}| \leq 5^D |S^n|$. Indeed, suppose not, then $|S^{4n}|>5^D |S^n|$ for all $n$ large enough, $n \geq n_0$ say, and hence $5^{Dk} |S^{n_0}| < |S^{4^kn_0}| =O(4^{Dk}n_0^D)$ a contradiction for large $k$.

Pick such an $n$. By Theorem \ref{BGT} applied to $A:=S^n$ and $K=5^D$, there is a virtually nilpotent subgroup $H_n$ such that $S^n$ has a large intersection with a coset of $H_n$. In particular $|S^{2n} \cap H_n| \geq |S^n|/C(D)\geq |S^{4n}|/(5^DC(D))$. Theorem \ref{gromov} follows by applying the following lemma with $k=2n$.

\begin{lemma} \label{schreier} In a group $\Gamma=\langle S\rangle$ if $|S^k \cap H| > |S^{2k}|/C$ for some subgroup $H$, some $C>0$ and some $k\geq C$, then $[\Gamma : H] \leq C$.
\end{lemma}

\begin{proof} Look at the Schreier graph of $\Gamma/H$. It is connected, so either $S^kH=\Gamma$ or $|S^kH/H| > k$. Write $S^k= \cup_{i=1}^N S^k \cap g_iH$. Clearly $\cup_{i=1}^N (S^k \cap g_iH)(S^k \cap H) \subset S^{2k}$. Hence $N|(S^k \cap H)|\leq |S^{2k}| < C |S^k \cap H|$. And hence $N=|S^kH/H| < C \leq k$. Hence $S^kH=\Gamma$, and $[\Gamma : H]=N < C$.
\end{proof}
\end{proof}

\begin{remark} In fact the above proves slightly more than Gromov's theorem. It shows that given $C,D>0$ there is $n_0=n_0(C,D)$ such that if $|S^n| \leq C n^D$ for some $n \geq n_0$, then $\Gamma$ is virtually nilpotent. Note that this is stronger than Gromov's own finitary version of his result (discussed at the end of his original paper \cite{gromov}). He showed (basically just by contradiction) that his theorem implies that there is $n_0=n_0(C,D)$ such that if $|S^n| \leq C n^D$ holds for all $n \leq n_0$, then $\Gamma$ is virtually nilpotent. So the added value here is that we need only one scale, where the doubling property occurs.
\end{remark}

We end this lecture by giving the following equivalent form of the main theorem:

\begin{theorem}[Structure theorem, equivalent form] \label{BGTbis} Let $G$ be a group, $K \geq 1$ a parameter and $A$ a finite subset with $|AA|\leq K|A|$. Then there is a subset $X \subset G$ such that $|X|\leq C(K)$ and a virtually nilpotent subgroup $\Gamma_0 \leq G$ such that $A \subset X\Gamma_0$.
\end{theorem}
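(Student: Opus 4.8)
The plan is to \emph{deduce} this statement from the weak version, Theorem~\ref{BGT}, rather than reprove anything: the two are equivalent. Indeed, the converse implication is a one-line pigeonhole argument --- if $A\subseteq X\Gamma_0$ with $|X|\le C(K)$ and $\Gamma_0$ virtually nilpotent, then some coset $x\Gamma_0$ meets $A$ in at least $|A|/|X|\ge |A|/C(K)$ points, which is precisely the conclusion of Theorem~\ref{BGT}. So all the real content is in Theorem~\ref{BGT}, and what remains is a soft covering argument.

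First I would apply Theorem~\ref{BGT} to $A$ to produce a virtually nilpotent subgroup $H\le G$ and an element $a_0$ with $A_0:=A\cap a_0H$ satisfying $|A_0|\ge |A|/C_0(K)$, where $C_0(K)$ is the constant from that theorem (the right-coset case $Ha_0$ is handled identically). The issue is that $A_0$, not $A$, is the part trapped in a coset, so next I would pass from ``a large chunk of $A$ in one coset'' to ``all of $A$ in a few cosets'' using the Ruzsa covering lemma: if $|BC|\le L|C|$ for finite sets $B,C$ in a group, then $B\subseteq XCC^{-1}$ for some $X\subseteq B$ with $|X|\le L$ (take $X$ maximal with $\{xC:x\in X\}$ pairwise disjoint; then $|X|\,|C|=|XC|\le|BC|\le L|C|$, and each $b\in B$ has $bC$ meeting some $xC$). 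I would apply this with $B=A$ and $C=A_0$: since $A_0\subseteq A$ one has $|AA_0|\le|AA|\le K|A|\le KC_0(K)\,|A_0|$, so there is $X\subseteq A$, $|X|\le KC_0(K)$, with $A\subseteq XA_0A_0^{-1}$.

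To finish, I would observe that $A_0A_0^{-1}\subseteq(a_0H)(a_0H)^{-1}=a_0HH^{-1}a_0^{-1}=a_0Ha_0^{-1}=:\Gamma_0$, and that $\Gamma_0$ is again virtually nilpotent --- conjugation by $a_0$ carries a finite-index nilpotent subgroup of $H$ onto one of $\Gamma_0$, preserving nilpotency class and number of generators, so the refinements in the Remarks after Theorem~\ref{BGT} survive. Thus $A\subseteq X\Gamma_0$ with $|X|\le KC_0(K)=:C(K)$.

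The only real obstacle here is Theorem~\ref{BGT} itself, whose proof is the subject of the rest of these lectures; the reduction above costs only an extra factor of about $K$ in the constant, necessarily replaces $H$ by a conjugate, and --- consistent with the example of a large finite simple group --- gives no bound at all on the index $[G:\Gamma_0]$.
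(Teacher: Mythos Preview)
Your proposal is correct and follows essentially the same route as the paper: apply Theorem~\ref{BGT} to get a large intersection with a single coset, then use the Ruzsa covering lemma (Lemma~\ref{ruzsa}) with that intersection as the set $B$ to cover all of $A$ by boundedly many translates. The only cosmetic difference is that the paper takes a right coset $\Gamma_0 a$, so that $BB^{-1}\subseteq\Gamma_0$ directly without the conjugation step you needed for the left-coset choice; either way the argument and the resulting constant $KC_0(K)$ are the same.
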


The equivalence follows immediately from the following basic combinatorial lemma by taking $B=A \cap  \Gamma_0a$, where $\Gamma_0a$ is the coset obtained in Theorem \ref{BGT}.

\begin{lemma}[Ruzsa covering lemma]\label{ruzsa} Suppose $A,B$ are subsets in a group $G$ such that $|AB| \leq K|B|$, then there is a subset $X \subset A$ of size $\leq K$ such that $A \subset XBB^{-1}$.
\end{lemma}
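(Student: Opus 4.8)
The plan is to run a greedy packing argument. First I would choose $X \subseteq A$ to be a maximal subset (with respect to inclusion) among all subsets $Y \subseteq A$ with the property that the translates $\{yB : y \in Y\}$ are pairwise disjoint. Such a maximal $X$ exists since $A$ is finite (or, if one wants to allow infinite $A$, by Zorn's lemma, though in our applications $A$ is finite).

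Next I would bound $|X|$. Since the sets $xB$ for $x \in X$ are pairwise disjoint and each is contained in $AB$, counting gives $|X|\cdot|B| = \big|\bigcup_{x \in X} xB\big| \leq |AB| \leq K|B|$, hence $|X| \leq K$.

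Finally I would check the covering property $A \subseteq XBB^{-1}$. Fix $a \in A$. If $aB$ were disjoint from $xB$ for every $x \in X$, then $X \cup \{a\}$ would be a strictly larger subset of $A$ with pairwise disjoint translates, contradicting maximality of $X$; in particular if $a \notin X$ this fails, and if $a \in X$ the containment $a = a\cdot b b^{-1} \in XBB^{-1}$ is immediate for any $b \in B$ (assuming $B \neq \emptyset$, which we may, the statement being vacuous otherwise). So there exist $x \in X$ and $b, b' \in B$ with $ab = xb'$, whence $a = x b' b^{-1} \in xBB^{-1} \subseteq XBB^{-1}$. Since $a \in A$ was arbitrary, $A \subseteq XBB^{-1}$.

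There is no real obstacle here: the only point requiring a little care is that $X$ is selected \emph{inside} $A$ (so that it is a legitimate witness in Theorem \ref{BGTbis}) and that maximality is taken with respect to the disjointness-of-translates property rather than with respect to cardinality — though for finite $A$ a maximum-cardinality such set works equally well. The passage from Theorem \ref{BGT} to Theorem \ref{BGTbis} then follows by applying the lemma with $B := A \cap \Gamma_0 a$: one has $|AB| \leq |A| \leq C(K)|A\cap\Gamma_0 a|$ by the conclusion of Theorem \ref{BGT}, so the lemma yields $X$ with $|X| \leq C(K)$ and $A \subseteq X B B^{-1} \subseteq X \Gamma_0$, since $BB^{-1} \subseteq \Gamma_0 a a^{-1} \Gamma_0 = \Gamma_0$.
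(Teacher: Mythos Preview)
Your proof is correct and is essentially the same greedy packing argument as the paper's: pick a maximal family of disjoint translates $xB$ with $x\in A$, bound its size by $K$ via $|X||B|\le|AB|$, and use maximality to cover. Your added remarks on the case $a\in X$ and on the derivation of Theorem~\ref{BGTbis} are accurate but not needed for the lemma itself.
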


\begin{proof} Pick a maximal family of disjoint translates of $B$ of the form $a_iB$ for $a_1, \ldots, a_N \in A$. Clearly $N \leq K$. Then for every $a \in A$, $aB$ intersects some $a_iB$ non trivially and hence $a \in a_iBB^{-1}$. Take $X=\{a_1,\ldots,a_N\}$.
\end{proof}

\section{The structure of approximate groups and geometric applications}

In the first lecture, we saw that our structure theorem \ref{BGT} easily implies Gromov's polynomial growth theorem. To provide further motivation for our study of the non-commutative inverse Freiman problem and of approximate groups, we begin this lecture with another geometric application.

\subsection{A generalized Margulis lemma}

In his book on metric structures on Riemannian and non-Riemannian spaces \cite{gromov-book}, Gromov discusses among many other things the properties of spaces with a lower curvature bound, in particular the Margulis lemma. The Margulis lemma is an important result in Riemannian geometry and hyperbolic geometry, where it is used for example to give lower bounds on the volume of closed manifolds, and to describe the geometry of cusps. Usually this lemma requires some curvature bounds on the manifold (see e.g. the books by Buser-Karcher \cite{buser-karcher} and Burago-Zalgaller \cite{burago-zalgaller}). The following result, which was conjectured by Gromov in this book, shows that the lemma holds in much greater generality.

\begin{theorem}[Generalized Margulis Lemma] Let  $X$ be a metric space and assume that every ball of radius $4$ can be covered by at most $K$ balls of radius $1$. Now let $\Gamma \leq Isom(X)$ be a discrete subgroup of isometries of $X$. Then there is $\eps_0>0$ depending on $K$ only and not on $X$ such that for every $x \in X$ the subgroup generated by
$$S_\eps(x):=\{\gamma \in \Gamma ; d(\gamma \cdot x,x)< \eps\}$$
is virtually nilpotent if $\eps<\eps_0$.
\end{theorem}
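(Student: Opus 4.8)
The plan is to reduce the statement to the structure theorem \ref{BGT} by exhibiting, inside $\Gamma$, a large finite set of small doubling. Fix $x \in X$ and $\eps < \eps_0$, with $\eps_0$ to be chosen. The natural candidate for an approximate group is a set of the form $A := \{\gamma \in \Gamma : d(\gamma\cdot x, x) < R\}$ for a suitable radius $R$ comparable to $\eps$; since $\Gamma$ is discrete and acts on $X$, each such $A$ is finite. The packing hypothesis --- every ball of radius $4$ is covered by $K$ balls of radius $1$, hence by iteration every ball of radius $2^k$ is covered by $K^{O(k)}$ unit balls --- is what will control cardinalities: the orbit points $\gamma\cdot x$ for $\gamma \in A$ all lie in $B(x,R)$, and a discreteness/injectivity-on-scale argument (using that $\Gamma$ is discrete, so the orbit map has some definite minimal separation near $x$, at least after passing to the stabilizer-quotient) lets one compare $|A|$ and $|A^n|$ to the number of unit balls needed to cover $B(x, R)$ and $B(x, nR)$ respectively.

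First I would make this quantitative. If $\gamma_1, \gamma_2 \in A$ then $d(\gamma_1\gamma_2\cdot x, x) \le d(\gamma_1\gamma_2\cdot x, \gamma_1\cdot x) + d(\gamma_1\cdot x, x) = d(\gamma_2\cdot x, x) + d(\gamma_1\cdot x, x) < 2R$, so $AA \subset \{\gamma : d(\gamma\cdot x,x) < 2R\}$ and more generally $A^n$ sits inside the set of $\gamma$ moving $x$ by less than $nR$. Thus $A^n \cdot x \subset B(x, nR)$, which by the iterated packing bound is covered by $K^{O(\log(n))}$ (for fixed small $n$, just $O_K(1)$) unit balls. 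To turn a bound on the number of covering balls into a bound on $|A^n|$ one needs the orbit points to be roughly separated; this is where discreteness of $\Gamma$ enters. The cleanest route is a standard pigeonhole/doubling-at-many-scales argument: choose the radius $R$ (between $\eps$ and, say, $\eps \cdot 10^{10}$, or among a dyadic range of scales) so that the number of unit balls meeting $A(R)\cdot x$ does not jump by more than a fixed factor when $R$ is doubled a bounded number of times. Since the total covering number of $B(x, 4^j \eps)$ grows only like $K^{O(j)}$, such a "stable" scale $R$ exists among $O(1)$ consecutive scales by pigeonhole. At that scale one gets $|A(4R)\cdot x| \le C(K) |A(R)\cdot x|$ in the sense of covering numbers.

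The remaining technical point is to pass from covering-number estimates on the orbit $A\cdot x$ to cardinality estimates on $A \subset \Gamma$ itself, and then to invoke Theorem \ref{BGT} (or rather its three-fold version alluded to in \S\ref{tool}). Here I would first replace $\Gamma$ by the quotient by the (finite, by discreteness) pointwise stabilizer of a small ball around $x$, or argue that finitely many group elements can move $x$ to within any given small distance of a fixed point --- this finiteness is exactly discreteness of the action. Then $|A^n| \le (\text{bounded})\cdot \#\{\text{unit balls meeting } A^n\cdot x\}$, while $|A| \ge \#\{\text{unit balls meeting } A\cdot x\}/(\text{bounded})$, so from the stable-scale estimate we obtain $|A A A| \le C(K)|A|$ for the set $A = A(R)$. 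Now Theorem \ref{BGT} produces a coset $c\Gamma_0$ of a virtually nilpotent subgroup with $|A \cap c\Gamma_0| \ge |A|/C'(K)$. Finally, choosing $\eps_0$ small enough that $S_{\eps_0}(x) \subset A(R)^{-1}A(R)$ (which holds since any $\gamma$ moving $x$ by less than $\eps_0$ can be written as a ratio of two elements of $A(R)$ once $\eps_0 \ll R$ and $A(R)$ is "full" at the stable scale --- again a Ruzsa-covering style argument, Lemma \ref{ruzsa}), one checks that the group generated by $S_\eps(x)$ is commensurable to $\Gamma_0$ and hence virtually nilpotent.

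The main obstacle I anticipate is the passage from metric covering/packing data to genuine group-theoretic cardinality bounds: the packing hypothesis only constrains the orbit $\Gamma\cdot x$, not $\Gamma$, so one must carefully use discreteness to control the multiplicity of the orbit map near $x$ and to ensure the chosen scale $R$ is simultaneously (i) large enough that $A(R)$ has small tripling, (ii) small enough that $S_{\eps_0}(x)$ and the group it generates are captured by bounded products of $A(R)$, and (iii) "stable" under bounded dilation. Making these three requirements compatible --- via a pigeonhole over $O_K(1)$ dyadic scales --- together with extracting the virtually nilpotent \emph{subgroup} (not merely a virtually nilpotent \emph{approximate} group) from Theorem \ref{BGT} in a way that identifies it with $\langle S_\eps(x)\rangle$ up to finite index, is the heart of the argument.
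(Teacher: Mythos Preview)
Your overall strategy --- exhibit a finite set of bounded doubling and invoke the structure theorem --- is the right one, but the implementation is both overcomplicated and contains a genuine gap. The gap is in your choice of scale: you propose to run a pigeonhole over radii $R\in[\eps,\eps\cdot 10^{10}]$, but the packing hypothesis (balls of radius $4$ covered by $K$ balls of radius $1$) says nothing at scales below $1$. Iterating it only covers balls of radius $4^k$ by $K^k$ \emph{unit} balls; it does not give doubling at scale $r\ll 1$. At your scale $R\ll 1$ the orbit $A(R)\cdot x$ sits in a single unit ball, so unit-ball covering numbers carry no information, and there is no route from them to $|A(R)^3|\le C(K)|A(R)|$.

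The paper's argument avoids all of this by working at the \emph{fixed} scale $2$. Set $A:=S_2(x)$; by the triangle inequality $A^2\subset S_4(x)$. Cover $B(x,4)$ by unit balls $B(x_i,1)$, $i\le K$, and for each $i$ with $B(x_i,1)\cap S_4(x)\cdot x\neq\emptyset$ pick a \emph{group element} $\gamma_i\in S_4(x)$ with $\gamma_i\cdot x\in B(x_i,1)$. Then any $\gamma\in S_4(x)$ has $\gamma\cdot x\in B(x_i,1)$ for some $i$, so $d(\gamma_i^{-1}\gamma\cdot x,x)=d(\gamma\cdot x,\gamma_i\cdot x)<2$, i.e.\ $\gamma\in\gamma_i S_2(x)$. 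Hence $S_4(x)\subset\{\gamma_1,\dots,\gamma_K\}\,S_2(x)$ and $|AA|\le K|A|$. Choosing the $\gamma_i$ inside $\Gamma$ is precisely what dissolves your ``multiplicity of the orbit map'' worry: there is no need to compare covering numbers with cardinalities, quotient by stabilizers, or select a stable scale.

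From here the paper applies Theorem~\ref{BGTbis} (the covering form, not the coset-intersection form you invoke) to obtain $S_2(x)\subset X\Gamma_0$ with $|X|\le C(K)$ and $\Gamma_0$ virtually nilpotent, takes $\eps_0:=1/n$ for an integer $n$ with $C(K)<n<2C(K)$, notes that $S_\eps(x)^n\subset S_{n\eps}(x)\subset S_2(x)\subset X\Gamma_0$, and observes that the nondecreasing sequence $S_\eps(x)^k\Gamma_0/\Gamma_0\subset X\Gamma_0/\Gamma_0$ must stabilize before step $n>|X|$, whence $[\langle S_\eps(x)\rangle:\langle S_\eps(x)\rangle\cap\Gamma_0]\le n\le 2C(K)$. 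No pigeonhole, no Ruzsa covering, no stable-scale selection is needed.
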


The classical Bishop-Gromov inequalities imply that hypotheses of this theorem are fulfilled for example when $X$ is the universal cover of a compact Riemannian $n$-manifold $M$ with the parameter $K$ depending only on $n$ and the lower bound on the curvature of $M$, and $\Gamma=\pi_1(M)$.

Note that there is no properness assumption on $X$. However when we say that $\Gamma$ is a ``discrete group of isometries'' we mean that the orbit maps $\Gamma \to X$, $\gamma \mapsto \gamma \cdot x$ are proper for every $x$, i.e. inverse images of bounded balls are finite. In fact all we need is that $S_2(x)$ is finite for each $x$.\\

\begin{proof} Clearly, from the triangle inequality we have: $S_2(x)^2 \subset S_4(x)$. Let us prove that $|S_2(x)^2|\leq K|S_2(x)|$. Write $B(x,4) \subset \cup_{i=1}^K B(x_i,1)$ for a collection of $K$ points $x_i \in X$. If $B(x_i,1)$ intersects $S_4(x)\cdot x$, then pick $\gamma_i \in S_4(x)$ such that $\gamma_i \cdot x \in B(x_i,1)$. Now given any $\gamma \in S_4(x)$, $\gamma \cdot x \in B(x_i,1)$ for some $i$, hence $d(\gamma \cdot x, \gamma_i \cdot x)\leq 2$. This means that $\gamma_i^{-1} \gamma \in S_2(x)$. Hence we proved $S_4(x) \subset XS_2(x)$ for some set $X\subset \Gamma$ with $|X|\leq K$. In particular $|S_2(x)^2|\leq |S_4(x)|\leq K|S_2(x)|$.

Now we can apply the structure theorem (here Theorem \ref{BGTbis}) to $S_2(x)$ and conclude that there is a virtually nilpotent group $\Gamma_0$ such that $S_2(x) \subset X\Gamma_0$ for some finite set $X$ with $|X|\leq C(K)$. Pick an integer $n$ with $C(K)< n < 2C(K)$ and $\eps_0=\frac{1}{n}$. If $\eps < \eps_0$, then by the triangle inequality $S_\eps(x)^n \subset S_2(x)\subset X\Gamma_0$. But $S_\eps(x)^k\Gamma_0/\Gamma_0$ is a non-decreasing sequence of sets and if for some $k$ $S_\eps(x)^k\Gamma_0/\Gamma_0=S_\eps(x)^{k+1}\Gamma_0/\Gamma_0$, then the sequence stabilizes and $S_\eps(x)^k\Gamma_0/\Gamma_0=\langle S_\eps(x)\rangle \Gamma_0/\Gamma_0$. Since $n>|X|$ it follows that $S_\eps(x)^n\Gamma_0/\Gamma_0=\langle S_\eps(x)\rangle \Gamma_0/\Gamma_0$ and $[\langle S_\eps(x)\rangle:\langle S_\eps(x)\rangle \cap \Gamma_0] \leq n \leq 2C(K)$. Hence $\langle S_\eps(x)\rangle$ is virtually nilpotent.
\end{proof}

Here again the nilpotent subgroup can be taken to have at most $O(\log K)$ generators and be of nilpotency class $O(\log K)$. \\

The best constant $\eps_0(X)$ is usually called the \emph{Margulis constant} of $X$.\\

Yet another application of the structure theorem is the fact, first proved by Gromov, that the fundamental groups of almost flat manifolds are virtually nilpotent. This was later extended by Fukaya-Yamaguchi to almost non-negatively curved manifolds and by Cheeger-Colding and Kapovich-Wilking to Ricci almost non-negatively curved manifolds. We can recover this last result from the structure theorem by a simple application of the Bishop-Gromov volume comparison estimates (see \cite{bgt})

\begin{theorem}[Almost non-negatively curved manifolds] Given $d$ there is $\eps(d)>0$ such that if $M$ is a Riemannian $d$-manifold with diameter $1$ and Ricci curvature bounded below by $-\eps(d)$, then $\pi_1(M)$ is virtually nilpotent.
\end{theorem}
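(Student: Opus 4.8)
The plan is to pull everything up to the universal cover $\widetilde M$ and apply the Generalized Margulis Lemma to $\Gamma=\pi_1(M)$, acting by deck transformations, \emph{after rescaling the metric}. Since $M$ is closed, $\widetilde M$ is complete and $\Gamma$ acts freely and properly discontinuously; in particular, for any lift $\tilde x\in\widetilde M$ of a point $x\in M$ the sets $S_r(\tilde x)=\{\gamma\in\Gamma:\ d_{\widetilde M}(\gamma\cdot\tilde x,\tilde x)<r\}$ are finite, so $\Gamma$ qualifies as a discrete group of isometries in the sense required by that lemma. We shall also use Milnor's classical remark that $\Gamma$ is generated by its \emph{short} deck transformations $\{\gamma\in\Gamma:\ d_{\widetilde M}(\gamma\cdot\tilde x,\tilde x)\le 2\diam(M)\}$: subdivide a minimizing geodesic from $\tilde x$ to $\gamma\cdot\tilde x$ into arcs of length $\le 2\diam M$ and use the covering projection at each subdivision point.

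The first real step is to rescale. Multiplying the metric of $M$ by $\lambda^2$ multiplies lengths by $\lambda$ and divides the normalized Ricci lower bound by $\lambda^2$, while leaving $\pi_1$ untouched; choosing $\lambda=\lambda(\eps(d),d)$ with $\lambda^2=\eps(d)/(d-1)$ produces a manifold $M_\lambda$ with $\operatorname{Ric}_{M_\lambda}\ge -(d-1)$ and $\diam(M_\lambda)=\lambda$. Its universal cover $\widetilde M_\lambda$ has the same curvature bound, so by the Bishop--Gromov volume comparison --- via the standard packing argument, in which a maximal $\tfrac12$-separated subset of a ball of radius $4$ yields disjoint balls of radius $\tfrac14$ inside a ball of radius $9$, whose volumes are comparable to that of the big ball by comparison with the curvature $-1$ model space --- every ball of radius $4$ in $\widetilde M_\lambda$ is covered by at most $K=K(d)$ balls of radius $1$, with $K$ depending only on $d$. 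The Generalized Margulis Lemma applied to $\Gamma\curvearrowright\widetilde M_\lambda$ now yields a constant $\eps_0=\eps_0(K)=\eps_0(d)>0$ such that $\langle S_{\eps'}(\tilde x)\rangle$ is virtually nilpotent for all $\eps'<\eps_0$ and all $\tilde x\in\widetilde M_\lambda$.

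It remains only to fix $\eps(d)$. Because $\eps_0$ depends on $d$ alone --- this is exactly why we normalized the curvature bound before invoking Bishop--Gromov --- we are free to choose $\eps(d)$ so small that the rescaled diameter satisfies $2\diam(M_\lambda)=2\lambda=2\sqrt{\eps(d)/(d-1)}<\eps_0(d)$. Then every short deck transformation of $M_\lambda$ moves $\tilde x$ by less than $\eps_0$, so for any $\eps'$ with $2\lambda<\eps'<\eps_0$ we get $\Gamma=\langle S_{\eps'}(\tilde x)\rangle$, and this group is virtually nilpotent by the previous paragraph. Hence $\pi_1(M)=\Gamma$ is virtually nilpotent.

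The one point that needs care --- and the step I would double-check most closely --- is the apparent circularity among the constants: the covering number $K$, and hence $\eps_0$, a priori depend on the Ricci lower bound, which depends on the rescaling $\lambda$, which we then want to pick using $\eps_0$. This is harmless provided the quantifiers are ordered correctly: rescale so that the curvature bound is a fixed function of $d$, read off $K(d)$ and $\eps_0(d)$ from that normalized picture, and only afterwards choose $\eps(d)$ small enough that the then-determined quantity $2\lambda=2\sqrt{\eps(d)/(d-1)}$ falls below $\eps_0(d)$. (As noted after the Margulis lemma, the $O(\log K)$ bounds on the nilpotency class and number of generators of the nilpotent subgroup are inherited here as well.)
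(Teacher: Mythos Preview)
Your proof is correct and follows exactly the approach the paper sketches: the paper does not give a detailed argument but simply remarks that the result follows from the structure theorem (via the Generalized Margulis Lemma just proved) together with the Bishop--Gromov volume comparison, referring to \cite{bgt} for details. Your rescaling to normalize the Ricci bound, the packing argument via Bishop--Gromov to obtain a dimension-only covering constant $K(d)$, the application of the Generalized Margulis Lemma to the deck-transformation action on the universal cover, and Milnor's short-generator observation together fill in those details correctly, and your careful ordering of the constants resolves the only potential subtlety.
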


\subsection{Approximate groups and the additive combinatorics toolkit}\label{tool}
We now return to additive combinatorics and describe a few combinatorial tricks and useful tools that one uses all the time when one deals with sets of small doubling. The first one is the so-called Ruzsa triangle inequality. Given two finite subsets $A,B$ in a group $G$ let

$$d(A,B):= \log \frac{|AB^{-1}|}{\sqrt{|A||B|}}$$
Clearly $d(A,B)=d(B,A)$ and $d(A,B) \geq 0$ because $|AB^{-1}|\geq |A|,|B|$. What is remarkable is the following:

\begin{lemma}[Ruzsa triangle inequality] Given three finite subsets $A,B,C$ in $G$, the triangle inequality holds
$$d(A,C) \leq d(A,B) + d(B,C)$$
\end{lemma}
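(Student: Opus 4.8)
The plan is to unwind the definition of $d$ and reduce the claimed inequality to a purely combinatorial statement about cardinalities of product sets, namely that
$$|AC^{-1}|\cdot|B| \leq |AB^{-1}|\cdot|BC^{-1}|.$$
Indeed, once this is established, taking logarithms and subtracting $\log\sqrt{|A||B|}+\log\sqrt{|B||C|}=\log\bigl(|B|\sqrt{|A||C|}\bigr)$ from both sides gives exactly $d(A,C)\leq d(A,B)+d(B,C)$, since the $|B|$ factors cancel. So the whole content is the cardinality inequality.

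To prove the cardinality inequality I would construct an injective map
$$\varphi\colon (AC^{-1})\times B \longrightarrow (AB^{-1})\times (BC^{-1}).$$
For each element $x \in AC^{-1}$, fix once and for all a representation $x = a(x)\,c(x)^{-1}$ with $a(x)\in A$ and $c(x)\in C$. Then define $\varphi(x,b) := \bigl(a(x)b^{-1},\, b\,c(x)^{-1}\bigr)$. The first coordinate lies in $AB^{-1}$ and the second in $BC^{-1}$, so $\varphi$ is well-defined. For injectivity, suppose $\varphi(x,b)=\varphi(x',b')$. From the two coordinates we can recover the product $a(x)b^{-1}\cdot b c(x)^{-1} = a(x)c(x)^{-1} = x$, and likewise the product of the two coordinates of $\varphi(x',b')$ equals $x'$; hence $x=x'$. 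But then $a(x)=a(x')$ and $c(x)=c(x')$ (the representatives were fixed as functions of $x$), so the first coordinate $a(x)b^{-1}=a(x')b'^{-1}$ forces $b=b'$. Thus $\varphi$ is injective and the inequality follows.

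The only mild subtlety — and the step I would be most careful about — is the bookkeeping in the non-abelian setting: one must fix the factorization $x=a(x)c(x)^{-1}$ as a \emph{function} of $x$ before defining $\varphi$ (the axiom of choice, or just arbitrary selection since the sets are finite), and one must check that the product of the two output coordinates genuinely reconstructs $x$ in the correct order, which it does precisely because the map is designed so that the $b^{-1}$ and $b$ telescope. No commutativity is used anywhere. Everything else is formal.
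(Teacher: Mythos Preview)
Your proof is correct and is exactly the argument the paper gives: the same injection $(x,b)\mapsto (a_x b^{-1}, b\,c_x^{-1})$ from $AC^{-1}\times B$ into $AB^{-1}\times BC^{-1}$, with the same choice of representatives $a_x, c_x$ for each $x$. You have simply written out the injectivity check and the reduction to the cardinality inequality in more detail than the paper does.
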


Hence $d(A,B)$ is called the \emph{Ruzsa distance}\footnote{note that $d(A,B)=0$ does not imply $A=B$, but only (this is an exercise similar to Proposition \ref{extcase}) $A=aH$ and $B=bH$ for some finite subgroup $H$ and some $a,b \in G$.} between $A$ and $B$. The proof is very easy:\\

\begin{proof}Consider the map $AC^{-1} \times B \to AB^{-1} \times BC^{-1}$, which sends $(x,b)$ to $(a_xb^{-1},bc_x^{-1})$, where we made a choice of $a_x \in A$ and $c_x \in C$ for each $x \in AC^{-1}$. Then quite obviously this map is injective. Hence $|B||AC^{-1}|\leq |AB^{-1}||BC^{-1}|$, which is another way to phrase the triangle inequality $d(A,C) \leq d(A,B) + d(B,C)$.
\end{proof}

As a consequence:

$$\frac{|AA^{-1}|}{|A|} = e^{d(A,A)} \leq e^{d(A,A^{-1})+d(A^{-1},A)} = \big(\frac{|AA|}{|A|}\big)^2.$$

Using the triangle inequality one can also prove (\cite{tao,helfgott-sl2, petridis}):

\begin{lemma}[Small tripling lemma] If $A$ is any finite subset in a group $G$, and $n \geq 3$, then
$$\frac{|A^n|}{|A|} \leq \big( \frac{|A^3|}{|A|} \big)^{n-2}$$
\end{lemma}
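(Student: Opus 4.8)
The plan is to iterate the Ruzsa triangle inequality, applied with cleverly chosen product sets in place of $A$, $B$, $C$. The basic mechanism is: if I can control $|A^3|/|A|$, then I can control the Ruzsa distance between any two ``short'' powers of $A$, and bootstrap. Concretely, I would first record the two consequences of the triangle inequality that do the work. Setting $K := |A^3|/|A|$, I claim that $|A^{m+1} A^{-1}| \le K |A^m|$ for suitable $m$, or more usefully that $d(A^{j}, (A^{k})^{-1})$ can be bounded in terms of $\log K$ whenever $j+k \le 3$ or so, and then extended.

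The cleanest route I would take: show by induction on $n$ that $|A^{n}| \le K^{n-2} |A|$. For the base case $n=3$ this is the definition of $K$. For the inductive step, I want to pass from $A^{n}$ to $A^{n+1}$ at the cost of one more factor of $K$. The key inequality is of the shape $|A^{n+1}| \cdot |A| \le |A^{n-1} \cdot A| \cdot |A \cdot A^{2}| / (\text{something})$ — i.e. I would apply the Ruzsa triangle inequality $|B||A C^{-1}| \le |AB^{-1}||BC^{-1}|$ with a careful choice: take $A \rightsquigarrow A^{n-1}$, $C \rightsquigarrow (A^{2})^{-1}$ (so $A^{n-1}$ and $A^{2}$, giving $A C^{-1} = A^{n-1}(A^{2})^{-1}{}^{-1}$ — I need to be careful with inverses here, which is exactly the fiddly point) and $B \rightsquigarrow A^{-1}$ or $B \rightsquigarrow A$. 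The honest statement I'd aim to prove as a sublemma is: for all finite $A$ and all $i,j \ge 1$, $|A^{i+j-1}| \le |A^i|\,|A^j| / |A|$ would be too strong in general (that's the abelian Plünnecke bound), so instead I'd prove the one-sided version $|A^{i} A^{j}| \le \frac{|A^{i}A^{-1}|\,|A A^{j}|}{|A|}$ via the triangle inequality, combined with the small-tripling-type fact that $|A^{i}A^{-1}| \le K^{i-1}|A|$ proved by a parallel induction. Threading these two inductions together gives the bound $K^{n-2}$.

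The main obstacle — and the reason this lemma is genuinely ``slightly trickier'' than it looks — is the non-commutativity: unlike the abelian case one cannot freely move inverses around, and one must be disciplined about whether a given product set is of the form $A^p$, $A^p A^{-q}$, or $A^{-p}A^q$. The Ruzsa distance $d(A,B)$ is symmetric and involves $AB^{-1}$, so each application forces an inverse to appear, and the bookkeeping of exponents (making sure the exponents on the right-hand side sum correctly and stay $\le 3$ at the atomic level so that the hypothesis $|A^3| \le K|A|$ applies) is where all the care goes. I expect the inductive step will need the auxiliary estimate $|A^{-1}A^{n}| \le K^{n-1}|A|$ and $|A^{n}A^{-1}| \le K^{n-1}|A|$ established first (again by induction from $n=2$, where $|AA^{-1}| \le |AAA|/|A| \cdot$ — using $d(A^{-1},A^{-1}) \le 2 d(A^{-1},A^{2})$ type manipulations), after which the bound on $|A^n|$ itself follows by one more triangle inequality split $A^{n} = A^{\lceil n/2\rceil}\cdot A^{\lfloor n/2\rfloor}$ or by the telescoping $A^{n} \subset A^{n-1}\cdot A$. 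Everything reduces to a finite number of applications of the single displayed triangle inequality; the plan is just to organize the induction so the exponents never exceed what the $|A^3|$ hypothesis can absorb.
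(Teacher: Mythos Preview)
Your plan is on the right track and matches exactly what the paper indicates: the paper does not give a proof of this lemma at all, but simply records that it follows from the Ruzsa triangle inequality and cites \cite{tao,helfgott-sl2,petridis} for the details. So there is nothing to compare against beyond the one-line hint ``using the triangle inequality one can also prove\ldots''.

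That said, your write-up is a sketch rather than a proof, and the place where you hesitate is precisely the place where the work lies. The clean way to organise the induction (as in the cited references) is to prove the stronger statement that \emph{every} signed product $|A^{\epsilon_1}\cdots A^{\epsilon_n}|$ with $\epsilon_i\in\{\pm1\}$ is bounded by $K^{O(n)}|A|$, not just the positive powers. The point is that one application of the triangle inequality in the form $|A|\,|XY|\le |XA|\,|A^{-1}Y|$ (take $W=A^{-1}$, $U=X$, $V=Y^{-1}$) reduces a length-$n$ product to a length-$(n-1)$ product times a length-$2$ or length-$3$ product, but inevitably introduces an $A^{-1}$; so you cannot stay within the class of pure positive powers, and you must control the mixed products simultaneously. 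Once you accept this, the base case becomes checking all eight signed triple products $|A^{\pm1}A^{\pm1}A^{\pm1}|$, each of which follows from one or two applications of the triangle inequality against the hypothesis $|A^3|\le K|A|$, and the induction goes through uniformly. Your instinct that one needs the auxiliary estimates $|A^{-1}A^{n}|$ and $|A^{n}A^{-1}|$ is exactly right; the cleanest bookkeeping is simply to carry all sign patterns along from the start rather than to run two parallel inductions.
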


Dealing with conditions like $|AA|\leq K|A|$, or $|A^{-1}A|\leq K|A|$ can be cumbersome at times and it is easy to run into unessential technical difficulties that hide the real ones. This is why Terry Tao came up with another set of axioms for a subset $A$, closely related to the small doubling condition, which is much easier to handle. He coined the following definition (\cite{tao}):

\begin{definition}[Approximate subgroup] Let $G$ be a group and $K \geq 1$ a parameter. A subset $A$ of $G$ is called an $K$-approximate subgroup if
\begin{itemize}
\item $A=A^{-1}$ and $1 \in A$,
\item $AA \subset XA$ for some subset $X \subset G$ with $|X|\leq K$.
\end{itemize}
\end{definition}

Note: this makes sense for $A$ infinite as well. But we will be mainly concerned with finite sets and by abuse of language $A$ will always be assumed finite in the sequel.\\

Clearly a $K$-approximate subgroup has doubling at most $K$. The converse is not true of course, but we have the following partial converse:

\begin{proposition}[Tao 2007 \cite{tao}]\label{approxlem} Suppose $A$ is a finite subset of an ambient group $G$ such that $|AA| \leq K|A|$. Then there is an $O(K^{O(1)})$-approximate subgroup $H$ of $G$ such that $A \subset XH \cap HY$ for some subsets $X,Y$ of size at most $O(K^{O(1)})$ and $|H|\leq O(K^{O(1)})|A|$.
\end{proposition}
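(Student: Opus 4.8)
The plan is to take $H := (A^{-1}A)^2 = A^{-1}AA^{-1}A$, or a similar low-degree symmetric product set, and show directly that it is an $O(K^{O(1)})$-approximate subgroup containing $A$ up to a bounded number of translates. The first step is to control the sizes of all the relevant product sets in terms of $K|A|$ using only the tools assembled in \S\ref{tool}: the Ruzsa triangle inequality, the inequality $|AA^{-1}| \le (|AA|/|A|)^2 |A|$ derived from it, and the small tripling lemma. Concretely, from $|AA| \le K|A|$ one first wants $|A^3| \le K^{O(1)}|A|$: this is the point flagged in the ``Remark'' about Plünnecke–Ruzsa in the non-abelian setting, and it follows by a Ruzsa-distance computation (e.g. $d(A,A^{-1}) \le \frac12\log K + \text{const}$, and then iterating the triangle inequality to bound $d$ between various products of $A$ and $A^{-1}$), after which the small tripling lemma gives $|A^n| \le K^{O(n)}|A|$ for all fixed $n$. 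In particular $|H| = |A^{-1}AA^{-1}A| \le K^{O(1)}|A|$, giving the last claimed bound.

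Next I would verify the covering property $HH \subset XH$ with $|X| \le K^{O(1)}$. Since $HH$ is again a bounded-degree product set in $A$ and $A^{-1}$, the previous step gives $|HH| \le K^{O(1)}|A| \le K^{O(1)}|H|$ (the last step because $|H| \ge |A|$, as $1 \in A^{-1}A \subset H$ forces $A \subset H$ and hence $|H| \ge |A|$). Now apply the Ruzsa covering lemma (Lemma \ref{ruzsa}) with the pair $(HH, H)$: from $|HH \cdot H| = |H^3| \le K^{O(1)}|H|$ one extracts $X \subset HH$ with $|X| \le K^{O(1)}$ and $HH \subset X H H^{-1} = XH$ (using $H = H^{-1}$, which holds by construction since $(A^{-1}A)^{-1} = A^{-1}A$). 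Symmetry $H = H^{-1}$ and $1 \in H$ are immediate from the definition, so $H$ is an $O(K^{O(1)})$-approximate subgroup. Finally, $A \subset H$ already gives the containment $A \subset XH \cap HY$ trivially with $|X| = |Y| = 1$; if one instead wants the cleaner statement where $H$ is, say, $(A^{-1}A)$ itself (which need not be an approximate group but is closer to a ``core''), one applies the Ruzsa covering lemma to $(A,A^{-1}A)$ and $(A, AA^{-1})$ to get the two-sided covering.

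The main obstacle is the first step: promoting the two-fold doubling hypothesis $|AA| \le K|A|$ to polynomial control on all fixed product sets $|A^n|$ and on mixed products like $|A^{-1}AA^{-1}A|$. In the abelian case this is Plünnecke–Ruzsa; non-abelian, one does not get it for free from $|AA| \le K|A|$ alone (the $H \cup \{x\}$ example in the earlier remark shows two-fold doubling does not bound three-fold doubling in general), so the argument has to route through Ruzsa distances carefully — the key trick being that $d(A, A)$, $d(A, A^{-1})$, $d(A^{-1}, A^{-1})$ are all comparable and the triangle inequality then bounds $d$ between arbitrary words. Once $|A^3| \le K^{O(1)}|A|$ is in hand, the small tripling lemma mechanizes the rest and the covering-lemma steps are routine. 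I would also need to keep track of which products are symmetric to invoke $H = H^{-1}$ cleanly; choosing $H$ to be an even-degree alternating product of $A$ and $A^{-1}$ (such as $A^{-1}A A^{-1}A$ or $(AA^{-1})^2$) handles this automatically.
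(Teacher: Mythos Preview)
The paper does not itself prove this proposition: it explicitly says ``the above facts require a bit of thought and are not immediate (see \cite{tao})'' and moves on to the easier Lemma \ref{approx5}. So there is no paper proof to compare against; the relevant question is whether your outline would actually work.

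There is a genuine gap, and it is exactly the step you flag as ``the main obstacle''. You claim that from $|AA|\le K|A|$ one can obtain $|A^3|\le K^{O(1)}|A|$ by ``routing through Ruzsa distances carefully''. This is false in the non-abelian setting, and the very counterexample you cite proves it: if $A=H\cup\{x\}$ with $H$ a finite subgroup and $H\cap xHx^{-1}=\{1\}$, then $|AA|\le 4|A|$ while $|AAA|\ge |HxH|=|H|^2$, which is of order $|A|^2$. No manipulation of Ruzsa distances among $A$, $A^{-1}$ and their products can circumvent this; the triangle inequality only controls quantities of the form $|UV^{-1}|$ once two of the three pairwise ``distances'' are already bounded, and there is simply no way to reach $|AAA|$ from the hypothesis. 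The Remark in \S1 that you reference says precisely this: in the non-abelian case one needs control on $|AAA|$, not $|AA|$, to launch the tripling machinery.

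Because of this, everything downstream collapses: you cannot bound $|H|$, $|HH|$, or $|H^3|$ polynomially in $K$, so the Ruzsa covering step never gets off the ground. (There is also a minor slip: $A\subset H=A^{-1}AA^{-1}A$ is not automatic unless $1\in A$.) Tao's actual argument does not try to control $|A^3|$ directly. Instead it passes to a large \emph{subset} of $A$ with small tripling --- this is the non-abelian Balog--Szemer\'edi--Gowers step alluded to right after the statement (``$A$ has a subset of size $\ge |A|/O(K^{O(1)})$ which has tripling at most $O(K^{O(1)})$'') --- and builds the approximate group from that subset. That passage to a subset is the missing idea in your proposal.
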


The implied constants in this result are absolute. This proposition essentially reduces the study of large sets of bounded doubling to that of approximate groups. In a similar flavor one shows that if $|A^3| \leq K|A|$, then $(A \cup A^{-1} \cup \{1\})^2$ is a $O(K^{O(1)})$-approximate subgroup, and that if $|AA| \leq K|A|$, then $A$ has a subset of size $\geq |A|/O(K^{O(1)})$ which has tripling at most $O(K^{O(1)})$. \\

While the above facts require a bit of thought and are not immediate (see \cite{tao}), we can prove here a very easy variant:

\begin{lemma}\label{approx5} If $G$ is a group and $A$ a subset such that $A=A^{-1}$ and $|A^5| \leq K|A|$, then $A^2$ is a $K$-approximate subgroup.
\end{lemma}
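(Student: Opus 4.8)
The plan is to verify the two defining axioms of a $K$-approximate subgroup for the set $A^2$. The first axiom is immediate: since $A = A^{-1}$ we have $(A^2)^{-1} = A^{-2} = A^2$, and $1 \in A$ (this is part of the hypothesis, or can be arranged since $A = A^{-1}$ forces $a a^{-1} = 1 \in A^2$, which is all we actually need). So the whole content is to produce a set $X$ with $|X| \le K$ and $A^2 A^2 = A^4 \subset X A^2$.

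The key step is a covering argument in the spirit of the Ruzsa covering lemma (Lemma \ref{ruzsa}). I would apply that lemma with the roles of $A$ and $B$ played by $A^2$ and $A^2$ respectively, but estimating the relevant doubling using the hypothesis $|A^5| \le K|A|$. Concretely: pick a maximal family of \emph{disjoint} translates $a_1 A^2, \dots, a_N A^2$ with each $a_i \in A^2$. Then $\bigsqcup_{i=1}^N a_i A^2 \subset A^2 \cdot A^2 = A^4$, and more importantly each $a_i A^2 \subset A^4$, so taking cardinalities and using $A^2 \supset \{1\}$ (hence $|A^2| \ge |A|$) we get $N|A| \le N|A^2| \le |A^4| \le |A^5| \le K|A|$, whence $N \le K$. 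By maximality, for every $a \in A^4 \subset A^2 \cdot A^2$... wait — I need to cover $A^4$, not just $A^2$, so let me instead run the covering lemma directly: take a maximal disjoint family $a_1 A^2, \dots, a_N A^2$ with $a_i \in A^4$; each such translate lies in $A^4 A^2 = A^6 \subset A^2 \cdot A^5 \cup \ldots$ — this is getting the wrong power. The clean route is Lemma \ref{ruzsa} applied to the pair $(A^3, A^2)$: one checks $|A^3 \cdot A^2| = |A^5| \le K|A| \le K|A^2|$, so there is $X \subset A^3$ with $|X| \le K$ and $A^3 \subset X A^2 (A^2)^{-1} = X A^4$ — still too big.

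The correct and simplest formulation is: apply Lemma \ref{ruzsa} with its ``$A$'' equal to our $A^4$ and its ``$B$'' equal to our $A$. Then $|A^4 \cdot A| = |A^5| \le K|A|$, so the lemma yields $X \subset A^4$ with $|X| \le K$ and $A^4 \subset X A A^{-1} = X A^2$. That is exactly $A^2 \cdot A^2 \subset X A^2$ with $|X| \le K$, which is the second axiom. Since we also have $(A^2)^{-1} = A^2$ and $1 \in A^2$, the set $A^2$ is a $K$-approximate subgroup, as claimed.

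I do not expect a genuine obstacle here: the only thing to be careful about is choosing the right pair of sets to feed into the Ruzsa covering lemma so that the product set appearing is $A^5$ (to use the hypothesis) while the ``correction'' term $BB^{-1}$ is exactly $A^2$ (to get the conclusion about $A^2$). The bound $|A^2| \ge |A|$, used to convert $|A^5| \le K|A|$ into $|A^4 \cdot A| \le K|A|$, follows from $1 \in A$. No quantitative loss occurs, so the approximation parameter is exactly $K$, matching the statement.
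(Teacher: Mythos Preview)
Your final argument is correct and is exactly the approach the paper intends: apply Ruzsa's covering lemma (Lemma~\ref{ruzsa}) to the pair $(A^4,A)$, using $|A^4\cdot A|=|A^5|\le K|A|$ to obtain $X\subset A^4$ with $|X|\le K$ and $A^4\subset XAA^{-1}=XA^2$; combined with $(A^2)^{-1}=A^2$ and $1\in A^2$ this gives that $A^2$ is a $K$-approximate subgroup. One cosmetic remark: the last paragraph's appeal to $|A^2|\ge |A|$ and $1\in A$ is unnecessary, since $|A^4\cdot A|=|A^5|\le K|A|$ is literally the hypothesis and already in the form $|A'B|\le K|B|$ required by the covering lemma.
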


\begin{proof}Simply apply Ruzsa's covering Lemma \ref{ruzsa}.
\end{proof}

It often happens that group theoretic arguments about finite subgroups of an ambient group can be transferred or adapted to approximate groups. This has been a guiding philosophy in much of our arguments. For example one has:

\begin{lemma}\label{subgroup} Suppose $A$ is a $K$-approximate subgroup and $B$ an $L$-approximate subgroup of an ambient group $G$ ($A$,$B$ not necessarily finite). Then $A^2 \cap B^2$ is a $(KL)^3$-approximate subgroup of $G$.
\end{lemma}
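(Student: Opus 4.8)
The plan is to follow the classical argument that, for genuine subgroups $A,B$, the intersection $A\cap B$ is again a subgroup, and to track the loss in the approximate-subgroup constant at each step. First I would check the easy part of the definition: $A^2\cap B^2$ is symmetric (since $A,B$ are symmetric, so are $A^2,B^2$, hence so is their intersection) and contains $1$ (since $1\in A\subset A^2$ and $1\in B\subset B^2$). So the whole content is the covering condition $(A^2\cap B^2)(A^2\cap B^2)\subset X(A^2\cap B^2)$ for some $X$ with $|X|\le (KL)^3$.

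The key observation I would use is that for a $K$-approximate group $A$, one has $A^n\subset X_n A$ with $|X_n|\le K^{n-1}$: this follows by induction from $AA\subset XA$ (at each stage $A^{n}=A\cdot A^{n-1}\subset A\cdot X_{n-1}A$; here one must commute a translate past $A$, which one handles by writing $Ax\subset ($ finitely many $)A$—more precisely, using that $xA\subset xAA\subset xXA$, or by the standard argument that $AX_{n-1}$ is covered by $K^{n-1}$ translates of $A$). So $A^2\cap B^2\subset A^4$ is covered by $K^3$ translates of $A$, and symmetrically by $L^3$ translates of $B$. The main step is then: take a \emph{maximal} set of elements $x_1,\dots,x_N\in (A^2\cap B^2)(A^2\cap B^2)\subset A^4\cap B^4$ such that the translates $x_i(A^2\cap B^2)$ are pairwise disjoint (a Ruzsa-covering-type selection, Lemma \ref{ruzsa} in spirit). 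By maximality every element of $(A^2\cap B^2)^2$ lies in some $x_i(A^2\cap B^2)(A^2\cap B^2)^{-1}=x_i(A^2\cap B^2)^2\subset x_i(A^4\cap B^4)$, and I need to bound $N$. Since the $x_i(A^2\cap B^2)$ are disjoint and all contained in $A^4\cap B^4\cdot(A^2\cap B^2)\subset A^6\cap B^6$, counting cardinalities gives $N\cdot|A^2\cap B^2|\le \min(|A^6|,|B^6|)$. Combining with lower bounds $|A^6|\le |A|\cdot(\text{something})$ is the wrong direction, so instead I would bound $N$ directly: $A^6$ is covered by $K^5$ translates of $A$, and a fixed translate $aA$ can meet at most one set $a_i(A^2\cap B^2)$... no — here is the cleaner route: cover $A^4\cap B^4$ by $K^3$ translates $g_jA$; each $g_jA$ contributes at most $|g_jA|=|A|$ elements, so $|A^4\cap B^4|\le K^3|A|$, and similarly $\le L^3|B|$; meanwhile $|A^2\cap B^2|\ge |A^2\cap B^2|$ and one needs a \emph{lower} bound on $|A^2\cap B^2|$ in terms of $|A|$ and $|B|$. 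This last lower bound (that the intersection is not too small, so that the packing number $N$ is not too large) is exactly what I expect to be the main obstacle.

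To overcome it I would invoke the standard fact, provable by a pigeonhole/Ruzsa argument as in the remarks after Proposition \ref{approxlem}, that if $A,B$ are approximate groups then $|A^2\cap B^2|\ge |A\cap B|$ and, more usefully, a double-counting argument on the number of ways to write elements as products shows $|A^4\cap B^4|\le (KL)^3|A^2\cap B^2|$ — i.e. $A^4\cap B^4$ is covered by $(KL)^3$ translates of $A^2\cap B^2$. Concretely: pick $x_1,\dots,x_N$ maximal with $x_i(A^2\cap B^2)$ disjoint inside $A^4\cap B^4$; then $N|A^2\cap B^2|\le |A^6\cap B^6|\le \min(K^5|A|,L^5|B|)$, while one also has $|A^2\cap B^2|\ge$ a comparable quantity via the orbit-counting bound $|A^2\cap B^2|\ge |A||B|/|A^2 B^2|\ge |A||B|/|(AB)^{?}|$ — this is getting delicate. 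The honest summary is: the routine steps are (i) symmetry and identity, (ii) $A^n$ covered by $K^{n-1}$ translates of $A$, (iii) a maximal-disjoint-translates selection giving the covering set $X$; the delicate step, and the one I would spend the most care on, is producing the numerical bound $|X|\le(KL)^3$, which requires the non-obvious lower bound on $|A^2\cap B^2|$ relative to $|A^4\cap B^4|$ — and for that I would lean on the Ruzsa-covering lemma applied with $A^2\cap B^2$ in the role of $B$, together with the fact that $A^4\cap B^4\subset A^4$ is $K^3$-covered by $A$ and $A$ is in turn close to $A^2\cap B^2$ after one more covering step, so that the product $(A^2\cap B^2)(A^2\cap B^2)$ is covered by at most $K^3\cdot L^3\cdot(\text{one more factor})$ translates, the total telescoping to $(KL)^3$.
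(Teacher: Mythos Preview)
Your approach has a genuine gap, and in fact heads in the wrong direction. You try to bound the size of the covering set $X$ via a Ruzsa-type maximal-disjoint-translates argument, which is fundamentally a \emph{cardinality} argument: you want $N\cdot|A^2\cap B^2|\le |A^6\cap B^6|$ and then a matching lower bound on $|A^2\cap B^2|$. But the lemma explicitly allows $A$ and $B$ to be infinite, so cardinalities need not make sense at all. Even in the finite case, there is no general lower bound on $|A^2\cap B^2|$ in terms of $|A|$ and $|B|$ (the intersection can be just $\{1\}$), and the inequality $|A^4\cap B^4|\le (KL)^3|A^2\cap B^2|$ that you eventually aim for is essentially equivalent to what you are trying to prove. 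Your own write-up signals this: the argument keeps circling without closing.

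The missing idea is a purely set-theoretic observation that avoids counting entirely. Write $A^2\subset XA$ and $B^2\subset YB$ with $|X|\le K$, $|Y|\le L$. Then
\[
(A^2\cap B^2)^2 \subset A^4\cap B^4 \subset X^3A \cap Y^3B \subset \bigcup_{x\in X^3,\; y\in Y^3} (xA\cap yB).
\]
Now for each pair $(x,y)$ with $xA\cap yB\neq\varnothing$, pick any $z_{x,y}\in xA\cap yB$. Since $z_{x,y}\in xA$ we have $x\in z_{x,y}A^{-1}=z_{x,y}A$, hence $xA\subset z_{x,y}A^2$; similarly $yB\subset z_{x,y}B^2$. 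Therefore
\[
xA\cap yB \subset z_{x,y}A^2 \cap z_{x,y}B^2 = z_{x,y}(A^2\cap B^2).
\]
Collecting these gives $(A^2\cap B^2)^2\subset Z(A^2\cap B^2)$ with $|Z|\le |X^3|\cdot|Y^3|\le K^3L^3=(KL)^3$. This is the paper's proof; note that no finiteness of $A$ or $B$ is used anywhere.
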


\begin{proof} Write $A^2 \subset XA$, $B^2 \subset YB$, so that $(A^2 \cap B^2)^2 \subset A^4 \cap B^4 \subset X^3A \cap Y^3B$. But every set of the form $xA \cap yB$ for $x \in X^3$, $y \in Y^3$ is contained in $z_{x,y}(A^2 \cap B^2)$ for some $z_{x,y}$ (pick any $z_{x,y}$ in $xA \cap yB$). So $(A^2 \cap B^2)^2 \subset Z(A^2 \cap B^2)$ for $Z=\{z_{x,y}\}_{x \in X, y\in Y}$ of size $\leq (KL)^3$.
\end{proof}

In particular if $H$ is a subgroup of $G$ and $A$ a $K$-approximate subgroup, then $A^2 \cap H$ is again a $K^3$-approximate subgroup.




\subsection{Nilprogressions and Tointon's theorem}

The structure theorem (Theorem \ref{BGT}) gave the key information about sets with bounded doubling, namely that they have a large intersection with a coset of a virtually nilpotent group. In order to get more precise information on the structure of these sets, or equivalently on the structure of approximate groups, we need to introduce the notion of \emph{nilprogression}. This generalizes to nilpotent groups the $d$-dimensional arithmetic progressions described above.\\

First we make the following general definition:

\begin{definition}[Progression in a general group] Let $G$ be a group and $x_1,\ldots,x_r$ be elements in $G$. Let $L_1,\ldots,L_r$ be non-negative integers. The set $P=P(x_1,\ldots,x_r; L_1,\ldots,L_r)$ of all group elements in $G$ which can be written as a word in the $x_i$'s and $x_i^{-1}$'s involving at most $L_i$ letters $x_i^{\pm 1}$ for each $i=1,\ldots, r$ is called a progression of rank $r$ and lengths $\{L_i\}_i$.
\end{definition}

For example if $L_1=\ldots = L_r=N$, then $P=P(x_1,\ldots,x_r; N,\ldots,N)$ is the ball of radius $N$ for the ``$\ell^\infty$-word metric'' with generating set $S:=\{x_1^{\pm 1},\ldots,x_r^{\pm 1}\}$, which we denote by $d_S^{\infty}$. Note that this is closely related to the usual word metric $d_S$ (or $\ell^1$-word metric), because obviously $d_S^{\infty} \leq d_S \leq r d_S^{\infty}$.\\

Similarly we define:

\begin{definition}[Nilprogression] A progression $P(x_1,\ldots,x_r; N,\ldots,N)$ is said to be a nilprogression with step $\leq s$ and rank $\leq r$ if the $x_i$'s generate  a nilpotent group with nilpotency class $\leq s$.
\end{definition}

and:

\begin{definition}[Coset nilprogression] A coset nilprogression is the inverse image of a nilprogression by a group homomorphism with finite kernel.
\end{definition}

Side remark: the ball-box principle (see \cite{gromov-cara}) regarding word balls in nilpotent groups (i.e. that they are sandwiched in between two ``nilpotent boxes'' of comparable size) remains valid for nilprogressions. This and some routine nilpotent algebra yields:

\begin{proposition}\label{nilapprox} There is $K=K(r,s)>0$ such that every coset nilprogression of rank $\leq r$,  step $\leq s$  and large enough side length (i.e. $L_i \geq C(r,s)$ for some constant $C(r,s)>0$) is a $K$-approximate subgroup.
\end{proposition}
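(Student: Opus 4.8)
The plan is to reduce the claim to the case of an honest nilprogression (rather than a coset nilprogression) and then to the case of a torsion-free finitely generated nilpotent group, where the ball-box principle applies cleanly. First I would handle the finite kernel: if $\pi\colon \tilde G \to G$ has finite kernel $F$ and $\tilde P = P(x_1,\dots,x_r;N,\dots,N)$ is a nilprogression with $P = \pi^{-1}(\pi(\tilde P))$ the corresponding coset nilprogression, it suffices to show $\tilde P$ is a $K_0$-approximate subgroup for some $K_0 = K_0(r,s)$ (after symmetrizing and adjoining the identity, which only changes constants); then pulling back the covering $\tilde P^2 \subset \tilde X \tilde P$ and multiplying by $F$ gives $P^2 \subset \pi^{-1}(\tilde X)P$ with a covering set of size $\le |F|\cdot|\tilde X|$. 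To control $|F|$ one uses that $F$ injects into a bounded-size piece of the nilprogression: more precisely, by the ball-box principle below, $\tilde P$ already contains a subprogression comparable to all of $\langle x_1,\dots,x_r\rangle \cap (\text{a fixed box})$, and $F$ being finite and normal lies in the torsion of $\langle x_1,\dots,x_r\rangle$, which in a finitely generated nilpotent group of step $\le s$ generated by $r$ elements has bounded order — so $|F| \le C(r,s)$. Hence it is enough to treat $\tilde P$.

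Next I would invoke the ball-box principle for nilprogressions, alluded to in the side remark: there is a finite generating set realizing coordinates of the second kind $y_1,\dots,y_m$ (the $y_j$ obtained by iterated commutators of the $x_i$, with $m = m(r,s)$) and exponents $e_1,\dots,e_m$ (the "weights"), together with constants $c = c(r,s) \in (0,1)$ and $C = C(r,s) > 1$, such that
$$
\{\, y_1^{a_1}\cdots y_m^{a_m} : |a_j| \le c N^{e_j}\,\} \ \subset\ P(x_1,\dots,x_r;N,\dots,N)\ \subset\ \{\, y_1^{a_1}\cdots y_m^{a_m} : |a_j| \le C N^{e_j}\,\}.
$$
This is exactly "word balls are sandwiched between two nilpotent boxes of comparable size," extended from $d_S$-balls to the $d_S^\infty$-balls defining $P$, and requires $N \ge C(r,s)$ so that the box on the left is nonempty and the commutator collecting process produces no spurious lower-order terms. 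Granting this, the doubling and covering estimates are routine nilpotent algebra: the product of two elements of the outer box, when collected back into coordinate-of-the-second-kind form, lands in a box with each exponent multiplied by an absolute constant depending only on $(r,s)$ (the Baker–Campbell–Hausdorff / collection formula contributes only finitely many commutator terms of controlled weight), hence $P^2$ is contained in a box $\mathcal B'$ with $|a_j| \le C' N^{e_j}$; and such a box is covered by $O_{r,s}(1)$ translates of the inner box of $P$, because $\mathcal B'$ is covered by $\prod_j O(C'/c)$ translates of $\{|a_j| \le cN^{e_j}\}$ along the grid, each of which is a left translate of the inner subprogression. Taking $X$ to be this covering set and noting the inner box $\subset P$ gives $P^2 \subset XP$ with $|X| \le K(r,s)$, after also symmetrizing; since $P = P^{-1}$ and $1 \in P$ by construction, $P$ is a $K(r,s)$-approximate subgroup.

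The main obstacle is the ball-box principle itself at the level of generality needed — specifically, that it holds for the $\ell^\infty$-word balls $P(x_1,\dots,x_r;N,\dots,N)$ and uniformly over all nilpotent groups of step $\le s$ and rank $\le r$, including those with torsion, with constants depending only on $(r,s)$ and not on the group. For torsion-free finitely generated nilpotent groups this is classical (Gromov, and the explicit treatment in \cite{gromov-cara}), using Mal'cev coordinates; the point requiring care is passing to groups with torsion and keeping the constants uniform, which is why the statement of Proposition \ref{nilapprox} only asserts existence of $K(r,s)$ and imposes the lower bound $L_i \ge C(r,s)$ on the side lengths. I would isolate this as the single "routine nilpotent algebra" input and derive the approximate-subgroup conclusion from it as above. (A small additional bookkeeping point: the definition of nilprogression above uses equal side lengths $N$, so one should either reduce unequal $L_i$ to this case by rescaling generators, or note that the argument goes through verbatim with a box of exponents adapted to the $L_i$; either way the constants remain functions of $r$ and $s$ only.)
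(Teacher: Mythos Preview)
Your overall strategy---reduce to honest nilprogressions, invoke the ball-box principle to sandwich $P$ between two Mal'cev-type boxes of comparable shape, and deduce the covering via nilpotent collecting---is exactly what the paper's side remark indicates (``the ball-box principle \dots\ and some routine nilpotent algebra'') and is the line taken in the references it cites.

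However, your reduction step from coset nilprogressions to nilprogressions contains a genuine error. You claim that the finite kernel $F$ has order bounded by $C(r,s)$ because it lies in the torsion of a rank-$r$, step-$s$ nilpotent group, and that this torsion has bounded order. This is false: $\Z/n\Z$ is rank $1$, step $1$, with torsion of arbitrary order $n$. Moreover there is no reason $F$ should sit inside $\langle x_1,\dots,x_r\rangle$ at all---the coset nilprogression lives in an arbitrary ambient group. Fortunately the bound on $|F|$ is entirely unnecessary. If $Q^2\subset XQ$ in the nilpotent quotient and $P=\pi^{-1}(Q)$, then choosing one lift $x'\in\pi^{-1}(x)$ for each $x\in X$ gives $P^2=\pi^{-1}(Q^2)\subset\pi^{-1}(XQ)=\bigcup_{x\in X}x'P$, so $P^2\subset X'P$ with $|X'|=|X|$. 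No factor of $|F|$ enters. (Also: a nilprogression as defined here is already symmetric and contains $1$, so your symmetrization remark is moot.)

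A smaller point: your ``grid translate'' covering of the large box by the small one is not quite as automatic as in the abelian case, since coordinates of the second kind are not a homomorphism and a left translate of a box is not literally a shifted box. The clean way around this---which you announce in your first sentence but never actually carry out---is to work in the \emph{free} nilpotent group $F_{r,s}$ of rank $r$ and step $s$, where Mal'cev coordinates give a bijection with $\Z^m$ and the boxes have exactly the product cardinality; establish the approximate-group property there (e.g.\ via Ruzsa covering, Lemma~\ref{ruzsa}, comparing $|\mathcal B(M')\mathcal B(M)|$ with $|\mathcal B(M)|$), and then push forward along the canonical surjection $F_{r,s}\to\langle x_1,\dots,x_r\rangle$. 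This simultaneously handles the torsion issue you flag at the end and makes the covering argument honest.
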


For the proof, see either \cite[Prop. 2.2.3]{tao-hilbert} or \cite[Cor. 3.6]{breuillard-tointon}. Following previous work of Green-Ruzsa \cite{green-ruzsa} on the abelian case, Green and the author \cite{breuillard-green} on torsion-free nilpotent groups, and Tao \cite{tao} on $2$-step groups, M. Tointon recently generalized the Freiman-Ruzsa-Chang theorem (Theorem \ref{freiman}) to an arbitrary nilpotent group. Using a clever lifting of approximate subgroups and induction on nilpotency class he proved:

\begin{theorem}[Tointon's theorem \cite{tointon}] Let $A$ be a $K$-approximate subgroup of a nilpotent group $G$ with nilpotency class $s$. Then there is a coset nilprogression $P$ with rank $\leq r=O_s(K^{C(s)})$ and step $\leq s$ such that $A \subset P \subset XA$ for some $X \subset G$ of size $|X|\leq O_s(K^{C(s)})$.
\end{theorem}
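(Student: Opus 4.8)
The plan is to argue by induction on the nilpotency class $s$. The base case $s=1$ is the abelian Freiman--Ruzsa--Chang theorem (Theorem \ref{freiman}, in the Green--Ruzsa form for general abelian groups, which one reduces to from $\Z$ by the usual Freiman-isomorphism trick), giving a coset progression of rank $O(K^{O(1)})$. For the inductive step, suppose the result holds for nilpotent groups of class $<s$ and let $A$ be a $K$-approximate subgroup of $G$ with class exactly $s$. The natural move is to look at the image $\bar A$ of $A$ in $\bar G := G/C^s(G)$, which is an approximate subgroup of a class-$(s-1)$ group, apply the inductive hypothesis to get a coset nilprogression $\bar P = P(\bar x_1,\ldots,\bar x_r; N_1,\ldots,N_r)$ of rank $r = O_s(K^{C(s-1)})$ with $\bar A \subset \bar P \subset \bar X \bar A$, and then try to lift the generators $\bar x_i$ to elements $x_i \in A^{O(1)}$ (possible since $\bar A$ is covered by boundedly many translates of $\bar A$'s image of $A$, so one can pull back the $\bar x_i$ into a bounded power of $A$). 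One then needs to understand the ``fiber'' $A^{O(1)} \cap C^s(G)$, which is a central -- hence abelian -- approximate subgroup, and apply the base case to it to get a progression $Q$ of bounded rank in the center. The candidate output is then the set $P$ of words in the lifted $x_i$'s with length bounds $N_i$, suitably enlarged, multiplied by $Q$; one checks $A \subset P$ and $P$ is itself contained in a bounded number of translates of $A$.

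The two places where real work is needed are: (i) controlling the commutators. When one multiplies out a word in the lifted $x_i$'s of length $\le N_i$ in each letter, the error term relative to the ``ordered'' monomial $x_1^{a_1}\cdots x_r^{a_r}$ lands in $C^2(G)$, and iterating down the lower central series one must check that these commutator corrections stay inside a nilprogression of controlled rank and length -- this is exactly the ``routine nilpotent algebra'' underlying the ball-box principle (Proposition \ref{nilapprox}) and must be run in reverse, as a structural statement rather than just a covering estimate. The bookkeeping here is what forces the rank bound to deteriorate from $C(s-1)$ to $C(s)$ at each step, and it is the technical heart of the argument. (ii) The lifting itself: the preimage in $A^{O(1)}$ of the abstract progression $\bar P$ need not be a progression, only $\bar P$-fibered over $C^s(G)$; one has to choose the lifts $x_i$ coherently enough that the set of words they generate, together with the central progression $Q$, actually contains $A$ and is not much bigger than $\bar P$ times $Q$. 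This is the step that Tointon's ``clever lifting'' refers to.

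I expect the main obstacle to be step (i) combined with the interaction between the lift of $\bar P$ and the central fiber $Q$: one must ensure that when commutators of the $x_i$ are pushed into $C^s(G)$, they are absorbed by $Q$ after only a bounded enlargement of the length parameters of $Q$, so that the final rank and covering number remain $O_s(K^{C(s)})$. Making the constants uniform in this recursion -- i.e. getting a genuine function $C(s)$ rather than a tower that blows up -- requires care about how many commutator corrections appear and how the approximate-subgroup property of $A^2 \cap C^s(G)$ (which follows from Lemma \ref{subgroup}) feeds into the base case with only a polynomial loss in $K$. Once that is arranged, the inclusions $A \subset P \subset XA$ follow by the Ruzsa covering lemma (Lemma \ref{ruzsa}) applied to $P$ and $A$, using that $P$ has bounded doubling by Proposition \ref{nilapprox}.
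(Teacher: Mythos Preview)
The paper does not contain a proof of this statement. Tointon's theorem is stated as a cited external result, with only the one-line description ``Using a clever lifting of approximate subgroups and induction on nilpotency class he proved\ldots'' preceding the statement; the paper then \emph{uses} the theorem (in deriving Theorem~\ref{strong} from the weak form) but never argues it.

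That said, your proposal is a faithful expansion of exactly that one-line description: induction on the class $s$, with the Green--Ruzsa abelian theorem as base case, quotienting by the last term $C^s(G)$ of the lower central series for the inductive step, lifting the generators of the resulting coset nilprogression, and combining with an abelian progression in the central fibre $A^{O(1)}\cap C^s(G)$ (which is a $K^{O(1)}$-approximate group by Lemma~\ref{subgroup}). You have also correctly identified the genuine content --- the commutator bookkeeping that keeps the lifted words inside a controlled nilprogression, and the coherence of the lift so that the central corrections are absorbed by the fibre progression with only polynomial loss. This is consistent with what the paper sketches and with the actual strategy in \cite{tointon}; there is nothing further in the present paper to compare against.
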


\subsection{Stronger form of the structure theorem}

We now are in a position to state the stronger form of our structure theorem.

\begin{theorem}[BGT strong form]\label{strong} Suppose $A$ is a $K$-approximate subgroup in an ambient group $G$. Then $A \subset XP$, where $X \subset G$ has size $|X|\leq C(K)$ and $P$ is a coset nilprogression whose rank and step are bounded in terms of $K$ only and such that $|P|/|A|$ is bounded in terms of $K$ only.
\end{theorem}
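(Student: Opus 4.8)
The plan is to combine three ingredients already assembled in the lecture: (i) the weak structure theorem (Theorem~\ref{BGT}, equivalently Theorem~\ref{BGTbis}), which pins $A$ inside boundedly many cosets of a \emph{virtually nilpotent} subgroup; (ii) the additive-combinatorics toolkit of \S\ref{tool}, which lets one pass from a set of bounded doubling to a genuine approximate subgroup of a subgroup and back, at the cost of polynomial losses in $K$; and (iii) Tointon's theorem, which handles the genuinely nilpotent case by producing a controlling coset nilprogression. The key point is that Theorem~\ref{strong} is essentially a ``reduction to the nilpotent case'' statement, and all the real work in the nilpotent case has been delegated to Tointon.

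\smallskip

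\textbf{Step 1: Reduce to an approximate subgroup of a virtually nilpotent group.} Start from the $K$-approximate subgroup $A$. Applying Theorem~\ref{BGTbis} (which applies since $|AA|\le K|A|$) we obtain $X_0$ with $|X_0|\le C(K)$ and a virtually nilpotent subgroup $\Gamma_0\le G$ with $A\subset X_0\Gamma_0$. By a standard pigeonhole (cover $A$ by the cosets $x\Gamma_0$, $x\in X_0$, and pick the heaviest one) some translate $A\cap a\Gamma_0$ has size $\ge |A|/C(K)$. Translating, and using Lemma~\ref{subgroup} together with Lemma~\ref{approx5} (or Proposition~\ref{approxlem}) to replace this large piece by an honest $O(K^{O(1)})$-approximate subgroup $A'\subset \Gamma_0$ of comparable size, we are reduced to the case where the ambient group is the virtually nilpotent group $\Gamma_0$ itself. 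Let $N\trianglelefteq\Gamma_0$ be a nilpotent finite-index subgroup, say of class $s$ and index $m$.

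\smallskip

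\textbf{Step 2: Descend to the nilpotent subgroup of finite index.} Intersect with $N$: by Lemma~\ref{subgroup}, $A'^2\cap N$ is again an approximate subgroup of $N$, of parameter $O(K^{O(1)})$, and it retains a definite proportion of $A'$ — here one must be a little careful, since a priori $[\Gamma_0:N]$ is unbounded, but the relevant translates of $N$ that meet $A'$ are boundedly many because $A'$ is itself close to a single coset of $\Gamma_0$ and has bounded doubling; this is where one invokes the refinement (remark d) after Theorem~\ref{BGT}) that the finite normal part can be arranged to sit inside $XA$ with $|X|\le c(K)$. After this we have a $K'$-approximate subgroup $B\subset N$, $K'=O(K^{O(1)})$, with $|B|\ge |A|/C(K)$ and $A'\subset X_1 B$ for some $|X_1|\le C(K)$.

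\smallskip

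\textbf{Step 3: Apply Tointon and lift back up.} Now Tointon's theorem applies to $B$ inside the nilpotent group $N$ of class $s$: there is a coset nilprogression $P$ of rank $\le O_s((K')^{C(s)})$ and step $\le s$ with $B\subset P\subset X_2 B$, $|X_2|\le O_s((K')^{C(s)})$. Since $|P|\le |X_2||B|$ and $|B|\le O(K^{O(1)})|A|$, we get the required bound on $|P|/|A|$; and the rank and step of $P$ are bounded in terms of $K$ (and $s$). Finally one chains the inclusions backwards: $A\subset X_0\Gamma_0$, the chosen coset piece sits inside $X_1 B\subset X_1 X_2 B\subset X_1 X_2 P$ up to translation, and the remaining (boundedly many) translates of the coset of $\Gamma_0$ are absorbed into the covering set $X$; a coset nilprogression stays a coset nilprogression after left-translation (absorb the translate into $X$). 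Collecting the covering sets gives $|X|\le C(K)$ and $A\subset XP$.

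\smallskip

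\textbf{Main obstacle.} The delicate part is \emph{not} the nilpotent case (Tointon does that) but the bookkeeping in Steps 1--2: one needs to know that $s=s(K)$ and that the relevant number of cosets of $N$ meeting the large piece of $A$ is bounded by a function of $K$ alone — i.e. one must genuinely use the finite-by-nilpotent refinement of the weak theorem, with the finite normal subgroup controlled inside a bounded number of translates of $A$, rather than the bare statement of Theorem~\ref{BGT}. Keeping the step and rank bounds honest through the repeated applications of Lemma~\ref{subgroup}, Lemma~\ref{approx5} and the Ruzsa covering lemma (each of which costs a fixed polynomial or power in the approximation parameter) is routine but must be done with care so that the final bounds depend on $K$ only.
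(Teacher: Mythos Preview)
Your overall strategy matches the paper's exactly: use the weak structure theorem (in its finite-by-nilpotent refinement), pass to the nilpotent part, and invoke Tointon. However, Step~2 contains a genuine confusion that, as written, does not go through.

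You let $N\trianglelefteq\Gamma_0$ be a \emph{nilpotent finite-index subgroup} and try to intersect $A'$ with it. The problem you yourself flag --- that $[\Gamma_0:N]$ is unbounded in $K$ --- is not resolved by your argument. The claim that ``the relevant translates of $N$ that meet $A'$ are boundedly many because $A'$ is itself close to a single coset of $\Gamma_0$ and has bounded doubling'' is circular: bounded doubling of $A'$ gives no a priori control on how many cosets of an arbitrary subgroup $N$ it meets, and you would need precisely such control to get $|A'^2\cap N|\ge |A'|/C(K)$. You then invoke Remark~d), but Remark~d) furnishes the \emph{dual} decomposition: a \emph{finite normal} subgroup $H$ (contained in $XA$ with $|X|\le c(K)$) with $\Gamma_0/H$ nilpotent of bounded step --- not a nilpotent subgroup of bounded relevant index. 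These are not interchangeable here.

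The paper's proof uses Remark~d) directly and avoids intersection altogether: one takes the quotient map $\pi:\Gamma_0\to L:=\Gamma_0/H$, observes that $\pi(A^2\cap\Gamma_0)$ is a $K^3$-approximate subgroup of the nilpotent group $L$ (of bounded step), applies Tointon in $L$ to get a coset nilprogression $Q$, and sets $P:=\pi^{-1}(Q)$. The point is that the pullback of a coset nilprogression through a homomorphism with finite kernel is again a coset nilprogression (by definition), so no intersection or coset-counting is needed. The size bound $|P|\le C(K)|A|$ then follows because $Q$ lies in boundedly many translates of $\pi(A^2\cap\Gamma_0)$ and the fibre $H$ lies in boundedly many translates of $A$. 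Replacing your Step~2 intersection by this projection-and-pullback argument closes the gap; everything else in your outline is fine.
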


\noindent {\bf Remarks:}

a) Here again $C(K)$ is some constant depending only on $K$, but the proof gives no explicit estimate in terms of $K$.  It can in fact be shown that $C(K)$ is not upper-bounded by $K^C$ for some absolute constant $C>0$. See \cite{breuillard-tointon} for an example.

b) Below we derive the above strong form from the weak form given earlier in Theorem \ref{BGTbis} using Tointon's theorem. Originally  in \cite{bgt} we proved the strong form directly (thus obtaining as a by-product a new proof of the original abelian Freiman-Ruzsa theorem) and obtained explicit bounds for the rank and step of the nilprogression and its size as well, but not on the constant $C(K)$. In fact we show in \cite{bgt} that one can find a coset nilprogression $P$ as in Theorem \ref{strong} with $P \subset A^{12}$ (and hence $|P|\leq K^{11}|A|$) whose rank and step are $O(K^3)$.\\

\begin{proof} From the weak form of the structure theorem, more precisely Theorem \ref{BGTbis}, we can find a virtually nilpotent subgroup $\Gamma_0 \leq G$ such that $A \subset X\Gamma_0$ for some subset $X$ of size $|X|\leq C(K)$. So  $|A^2 \cap \Gamma_0|\geq |A|/C(K)$. In fact (see Remark d) after Theorem \ref{BGT}) the proof shows that $\Gamma_0$ fits into an exact sequence $1 \to N \to \Gamma_0 \overset{\pi}{\rightarrow} L \to 1$, where $L$ is nilpotent with bounded rank and step, and $N$ is finite with $N \subset XA$ for some bounded set $X$, $|X|\leq c(K)$. By Lemma \ref{subgroup} $A^2 \cap \Gamma_0$ is a $K^3$-approximate subgroup.  Then $\pi(A^2 \cap \Gamma_0)$ is a $K^3$-approximate subgroup of $L$ and Tointon's theorem applies: there is a coset nilprogression $Q$ with bounded rank and step which contains $\pi(A^2 \cap \Gamma_0)$ and is contained in boundedly many translates of $\pi(A^2 \cap \Gamma_0)$. Set $P=\pi^{-1}(Q)$. This is again a coset nilprogression with bounded rank and step. It contains $A^2 \cap \Gamma_0$, so by Ruzsa covering (Lemma \ref{ruzsa}) $A$ lies in $XP$ for some bounded $X$. Moreover, $P$ is contained in boundedly many translates of $A$, because $N$ is.
\end{proof}

We now give an interesting consequence of the strong form of the theorem and in the last lecture, we will give two geometric applications, one to Cayley graphs of finite simple groups, the other to scaling limits of vertex transitive graphs, that both require this strong form.

  Suppose
 $$P=P(x_1,\ldots,x_r;L_1,\ldots,L_r)$$
 is a nilprogression of rank $r$ and step $s$, then for every $n \in \N$ the power set $P^n$ is obviously contained in $P(x_1,\ldots,x_r;nL_1,\ldots,nL_r)$. Using some nilpotent algebra it can be shown (by arguments similar to those in the proof of Proposition \ref{nilapprox}) that if the $L_i$'s are large enough, it also contains $P(x_1,\ldots,x_r;\frac{n}{C}L_1,\ldots,\frac{n}{C}L_r)$ for some $C=C(r,s)>0$ and that the three sets are of comparable size up to multiplicative constants depending only on $r$ and $s$. In particular $|P^{2n}|\leq K|P^n|$ for some constant $K=K(r,s)$ independent of the $L_i$'s and independent of $n$. So the bound on the doubling $P$ is inherited to all powers $P^n$. From this observation (proved in detail in \cite{breuillard-tointon}) and the strong form of the theorem it is then an exercise (similar to the derivation of Theorem \ref{gromov} from Theorem \ref{BGT}) to derive:

\begin{theorem}[doubling at one scale $\Rightarrow$ doubling at all scales, \cite{breuillard-tointon}]\label{bt-thm} Given $K>0$ there is $n_0,C(K)>0$ such that if $S$ is a finite subset of a group $G$ such that $S^{n_1}$ is a $K$-approximate subgroup for some $n_1 > n_0$, then $S^n$ is a $C(K)$-approximate subgroup for every $n \geq n_1$.
\end{theorem}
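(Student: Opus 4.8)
The plan is to mimic the derivation of Theorem \ref{gromov} from Theorem \ref{BGT}, but run in the opposite direction: instead of using a doubling hypothesis to locate an approximate group, we start from an approximate group (at scale $n_1$) and propagate the approximate-group property to all larger scales. The mechanism is that the strong structure theorem replaces $S^{n_1}$ by a coset nilprogression, and coset nilprogressions have the self-similarity property that all their powers have uniformly bounded doubling.

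Here are the steps. First I would apply Theorem \ref{strong} to the $K$-approximate subgroup $A:=S^{n_1}$: there is a coset nilprogression $P = P(x_1,\ldots,x_r; L_1,\ldots,L_r)$ with rank and step bounded in terms of $K$, a set $X$ with $|X| \leq C(K)$, and $S^{n_1} \subset XP$ with $|P|/|S^{n_1}|$ bounded in terms of $K$. Second, I would invoke the nilpotent-algebra fact stated in the excerpt just before the theorem: if the $L_i$ are large enough (which we may arrange, since a coset nilprogression with small side lengths has bounded size and can be absorbed into the constants — or one uses the version with $P \subset S^{12 n_1}$ and notes the $L_i$ grow with $n_1$, so taking $n_0$ large forces $L_i \geq C(r,s)$), then for every $m$ the power $P^m$ is sandwiched between $P(x_i; \tfrac{m}{C}L_i)$ and $P(x_i; m L_i)$ and all three have comparable size, whence $|P^{2m}| \leq K'|P^m|$ with $K' = K'(r,s)$ independent of $m$ and the $L_i$. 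Third — and this is the routine-but-load-bearing bookkeeping step — I would combine these: for $n \geq n_1$ write $n = q n_1 + t$ with $0 \leq t < n_1$, so $S^n \subseteq S^{(q+1)n_1} \subseteq (XP)^{q+1} \subseteq X' P^{q+1}$ where $|X'|$ is bounded by $|X|^{q+1}$... which is \emph{not} bounded, so instead one should absorb $X$ differently: since $P$ is a coset nilprogression containing a translate of $S^{n_1}$ up to the bounded factor, and $S^{n_1} S^{n_1} \subseteq X S^{n_1}$, the cleanest route is to note $S^n \subseteq S^{n_1} \cdot S^{n_1} \cdots S^{n_1}$ ($\lceil n/n_1 \rceil$ factors) and bound $|S^{n}|$ above and below by comparing with $|P^{\lceil n/n_1\rceil}|$, using that $S^{n_1}$ and $P$ are commensurable (each inside boundedly many translates of the other). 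Concretely: $S^{n_1} \subseteq XP$ gives $S^n \subseteq X^{\lceil n/n_1 \rceil} P^{\lceil n/n_1\rceil}$; to control $X^{\lceil n/n_1\rceil}$ one uses that $S^{n_1}$ is a $K$-approximate group so in fact $S^{m n_1} \subseteq Y_m S^{n_1}$ with $|Y_m| \leq K^{m-1}$, which is still unbounded — so the honest move is the one the excerpt flags as "an exercise similar to the derivation of Theorem \ref{gromov}": show $|S^{4n}| \leq C(K)|S^n|$ for all $n \geq n_1$ by the nilprogression comparison, and then deduce the approximate-group conclusion via Lemma \ref{approx5} (or Proposition \ref{approxlem}) applied at the appropriate scale. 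That is, once one has $|S^{5n}| \leq C(K)|S^n|$ for all $n \geq n_1$, the set $(S^n)$ has bounded tripling/quintupling at every scale, and $S^{2n}$ (say) is a $C(K)$-approximate subgroup by Lemma \ref{approx5}; a final reindexing covers every $n \geq n_1$ rather than only even multiples.

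I expect the main obstacle to be precisely this last bookkeeping: passing from "$S^{n_1}$ is commensurable with a coset nilprogression $P$ all of whose powers have bounded doubling" to "$|S^{Cn}| \leq C(K)|S^n|$ for every $n \geq n_1$" uniformly, without the translation sets $X$ proliferating as one takes higher powers. The resolution is that commensurability of $S^{n_1}$ with $P$ upgrades, via Ruzsa covering (Lemma \ref{ruzsa}) applied at each scale together with the small-doubling of $P^m$, to commensurability of $S^{mn_1}$ with $P^m$ with \emph{bounded} (in $K$, not in $m$) covering constants — this is where one genuinely uses that $S^{n_1}$ is an approximate group and not merely a set of bounded doubling, since $A A \subseteq X A$ is what lets products telescope with controlled overhead. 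Once that uniform commensurability is in hand, the doubling of $S^n$ at scale $n \in [m n_1, (m+1)n_1)$ is read off from the doubling of $P^m$, giving the desired $C(K)$, and Lemma \ref{approx5} finishes. I would not belabor the nilpotent algebra behind the $|P^{2m}| \leq K'|P^m|$ estimate, citing \cite{breuillard-tointon} as the excerpt does.
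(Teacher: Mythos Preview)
Your proposal is correct and follows precisely the route the paper indicates: apply Theorem \ref{strong} to $S^{n_1}$ to obtain a commensurable coset nilprogression $P$, invoke the uniform doubling $|P^{2m}|\le K'|P^m|$ of nilprogression powers, and then carry out the bookkeeping to transfer this to $S^n$ for all $n\ge n_1$ (the paper itself only says this last step is ``an exercise similar to the derivation of Theorem \ref{gromov} from Theorem \ref{BGT}'' and defers details to \cite{breuillard-tointon}). You correctly locate the load-bearing point---preventing the covering sets from proliferating as $m$ grows---and your resolution via Ruzsa covering at each scale, using that $P\subset S^{12n_1}$ so that powers of $P$ sit inside powers of $S^{n_1}$ and conversely, together with Lemma \ref{schreier} / Lemma \ref{approx5} to close, is the intended one.
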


\section{Hilbert's fifth problem and approximate groups}

In this third lecture, we outline the proof of the structure theorem (Theorem \ref{BGT}). A good deal of this lecture is devoted to explaining the key step of the Gleason-Yamabe theorem on the structure theory of locally compact groups.

\subsection{Introduction} Approximate subgroups of specific groups (such as linear groups, free groups, finite simple groups, etc...) had been studied by various authors relying on techniques specific to the given class of groups they were considering, often yielding explicit bounds. But no approach to tackle the general problem of understanding the structure of approximate subgroups of arbitrary groups existed prior to the work of Hrushovski \cite{hrushovski}.

In his paper \cite{hrushovski}, Hrushovski proposed an approach based on an analogy between this question and various problems and results from Stability Theory, a branch of Model Theory. His theorem, which we state below in a form that avoids the language of model theory, is the first step of the proof. For readers interested in the connection with model theory, we recommend van den Dries' Bourbaki report \cite{DriesBourbaki} as well as Hrushovski's own lecture notes.

Combined with the Gleason-Yamabe structure theorem for locally compact groups (Hilbert 5th problem) Hrushovski's theorem already gives non trivial information on the structure of an arbitrary approximate group. Roughly speaking Hrushovski derives from it a weaker structure theorem than Theorem \ref{BGT} in which approximate groups are showed to admit a nested sequence $S_{n+1}^2 \subset S_n$ of large approximate subgroups $S_n$ of $A$, which exhibits the following weak form of nilpotence: $[S_n,S_n] \subset S_{n+1}$, see \cite[Theorem 1.1]{hrushovski}.

In order to obtain the finer structure given by our Theorem \ref{BGT}, one has to go inside the proof of the Gleason-Yamabe theorem and adapt arguments discovered there, more than sixty years ago, in the context of topological groups to the setting of approximate groups.

Going through the proof of the Gleason-Yamabe theorem in the original papers of Gleason and of Yamabe or in the subsequent exposition in the book by Montgomery and Zippin can be daunting. Nevertheless the task has been greatly eased by a number of people over the years, including I. Kaplansky \cite{kaplansky} and especially J. Hirschfeld \cite{hirschfeld} who identified and formulated the key claims (the Gleason-Yamabe lemmas) and gave a much streamlined treatment of the proof using ultra-products (recently simplified further by model theorists  L. van den Dries and I. Goldbring in a forthcoming article \cite{goldbring-van-den-dries}). The whole thing is now pretty demystified, and a clear and complete modern treatment is now available in Tao's book \cite{tao-hilbert}.

\subsection{Proof strategy}

We turn to the proof of the structure theorem. As in Gromov's proof of his theorem \cite{gromov} on groups with polynomial growth, one needs to pass from a discrete object (in our case a finite approximate group) to an infinite object on which we can perform analysis. While Gromov used his notion of Gromov-Hausdorff convergence to define his limit space (i.e. the asymptotic cone = the Cayley graph viewed from infinity) van den Dries and Wilkie \cite{dries-wilkie} later suggested to construct this space using ultra-filters. We will follow the same route and start with a non-principal ultrafilter $\mathcal{U}$ on the set of positive integers.

Fix $K \geq 1$. Given any family of groups $G_n$ and a family of finite $K$-approximate subgroups $A_n \leq G_n$, we form the ultra-product:

$$\A := \prod_{\mathcal{U}} A_n, \textnormal{ } \G:=\prod_{\mathcal{U}} G_n.$$

The first observation is that $\A$ is a $K$-approximate subgroup of $\G$. Indeed the ultra-product of a sequence of sets of cardinal at most $K$ has cardinal at most $K$. We call such a set $\A$ an \emph{ultra approximate group}.\\

The proof has essentially three main steps:\\

Step 1 : Build a topology.

Step 2 : Adapt the Gleason-Yamabe lemmas.

Step 3 : Conclude.\\

Step 1: This step is due to Hrushovski \cite{hrushovski}. While Gromov started with a sequence of metric spaces (Cayley graphs), we do not have any distance given a priori with our sequence of sets $A_n$. So the first goal here is to define a notion of closeness or nearness for elements of $A$. One would like to say that an element $g$ is close to the identity if the symmetric difference $gA \Delta A$ is very small in size. While this does not quite work on the nose, a variation of this idea can implemented successfully. This ``distance'' being defined for each $A_n$, it passes to the ultra-product to give rise to a genuine left invariant pseudo-distance $d(\cdot, \cdot)$ on the group $\langle \A \rangle$. One then shows that $G:=\langle \A \rangle/ \{ \a ; d(\a,1)=0\}$ is a locally compact second countable group. \\

Step 2: This part consists in adapting the Gleason-Yamabe lemmas from locally compact groups to approximate groups. The Gleason-Yamabe lemmas lie at the heart of the proof of the structure theorem for locally compact groups proved by Gleason and Yamabe in the early 1950's. According to this theorem, every locally compact group has an open subgroup which is a projective limit of Lie groups, i.e. has arbitrarily small normal compact subgroups modulo which it is a Lie group with a finite number of connected components. The Gleason-Yamabe lemmas serve two purposes in the proof: they allow to reduce to the NSS case (NSS= no small subgroups) and to show that the set of one-parameter subgroups of an NSS group is a vector space, thus giving rise to a Lie algebra and a Lie structure. It turns out that under some assumptions on the finite approximate group $A$ (which are anyway satisfied by a large approximate subgroup of $A$ as follows from Step 1), these lemmas can be adapted almost word for word to $A$ and yield  a finite subgroup $H$ of $\langle A \rangle$ contained in $A$ and normalized by $A$, together with an element $a \in A \setminus H$ which commutes with $A$ modulo $H$. This element will serve as a stem from which to build the nilprogression.\\

Step 3: Performing Step 2 for each $A_n$, we obtain an ultra finite subgroup $\HH:=\prod_{\mathcal{U}} H_n \vartriangleleft \langle \A \rangle$ and an ultra cyclic subgroup $\langle \a \rangle \leq \langle \A \rangle$, where $\a$ is the class of $(a_n)_{n}$. By Step 1 and the structure theorem for locally compact groups, we may assume after passing to an open subgroup that $G$ is a projective limit of Lie groups. Moding out by the maximal normal compact subgroup of $G$, we obtain a connected Lie group $L$. If $\dim L=0$, then $\A$ is covered by finitely many translates of a compact open subgroup and this means $A_n$ is covered by boundedly many translates of a finite subgroup for $\mathcal{U}$-almost every $n$. If $\dim L>0$, the projection of $\langle \a \rangle$ to $L$ is a central one-parameter subgroup. The rest of the proof proceeds by induction on $\dim L$, moding out $\langle \A \rangle$ by $\langle \a \rangle$ and starting all over again. A careful treatment of this step requires the introduction of \emph{local groups}\footnote{they can be avoided if one only shoots for Theorem \ref{BGT}, see the proof in \cite{DriesBourbaki}, but seem necessary for the finer structure described in Remark d) after Theorem \ref{BGT}, and hence for the strong form Theorem \ref{strong}.}. At the end we obtain that the $A_n$'s are covered by boundedly many translates of a finite-by-nilpotent subgroup, the finite part being itself contained in boundedly many translates of $A_n$. The theorem then follows by contradiction, picking a sequence of potential counter-examples $A_n$ to the statement of the theorem and carefully negating the quantifiers. \\

\noindent N.B. In Gromov's theorem the induction in the final step is done on the exponent of polynomial growth. In his paper \cite{hrushovski}, Hrushovski also gave another proof of Gromov's theorem and even a strengthening of it (which is now a simple consequence of Theorem \ref{BGT}) using induction on $\dim L$ as in Step 3 above. The novelty of \cite{bgt}, say compared to \cite{hrushovski}, consists mainly in Step 2.\\

Below we describe Step 1 and Step 2 and refer the reader to the original paper for Step 3, whose details are a bit more technical.

\subsection{Building a locally compact topology (Step 1)}

The goal here is to define a non-trivial topology on $\langle \A \rangle$ which, when a certain normal subgroup is quotiented out, turns $\langle \A \rangle$ into a locally compact group. We need to define a base of neighborhoods of the identity. The main tool for this is the following lemma:

\begin{lemma}[Sanders \cite{sanders}, Croot-Sisask \cite{croot-sisask}, Hrushovksi \cite{hrushovski}] If $A$ is a $K$-approximate group and $k \geq 1$, then there is $S \subset A^4$ such that
\begin{itemize}
\item $1 \in S$, $S=S^{-1},$
\item $S^k \subset A^4$,
\item $|S|\geq |A|/C(K,k)$
\end{itemize}
\end{lemma}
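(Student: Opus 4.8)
The plan is to reduce the statement to a Croot--Sisask type almost-invariance lemma and then read off the conclusion ``$S^k\subset A^4$'' from a support-disjointness argument. The one genuine idea is the choice of an auxiliary function: let $\mu=\tfrac1{|A|}1_A$ be the uniform probability measure on $A$ and set $\nu:=\mu*1_A$, so that (using $A=A^{-1}$) $\nu(x)=|A\cap xA|/|A|$. Three elementary observations about $\nu$: its support is exactly $AA^{-1}=A^2$ (so $\operatorname{supp}\nu\cdot(\operatorname{supp}\nu)^{-1}=A^4$, which is where the ``$A^4$'' will come from); $\|\nu\|_1=|A|$ and $0\le\nu\le1$; and, by Cauchy--Schwarz together with $|A^2|=|AA|\le K|A|$, one has $\|\nu\|_2^2\ge\|\nu\|_1^2/|\operatorname{supp}\nu|=|A|^2/|A^2|\ge|A|/K$. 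Working with $\nu$ rather than with $1_A$ itself is essential: being an average, $\nu$ is robust under random sampling, while $1_A$ is not.

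The main input I would invoke is the Croot--Sisask lemma (equivalently Sanders' Fourier argument in the abelian case, or Hrushovski's definable-stabilizer construction): for every $\delta>0$ the set of $\ell^2$-almost right-symmetries
\[
S:=\bigl\{\,t\in G\ :\ \|\rho_t\nu-\nu\|_2\le\delta|A|^{1/2}\,\bigr\},\qquad (\rho_tf)(x):=f(xt),
\]
satisfies $|S|\ge c(K,\delta)\,|A|$. Its proof is by random sampling and pigeonhole: take $x_1,\dots,x_m\in A$ i.i.d.\ uniform with $m$ of order $\delta^{-2}$, form $F_{\vec x}:=\tfrac1m\sum_i\rho_{x_i^{-1}}1_A$, note $\E F_{\vec x}=\nu$ and (using $|A^2|\le K|A|$) $\E\|F_{\vec x}-\nu\|_2^2\le|A|/m$, so most tuples are ``good''; since translating $F_{\vec x}$ on the right by $t$ merely shifts each coordinate of $\vec x$, a density argument over the good tuples produces a positive-density set of $t$ with $\|\rho_t\nu-\nu\|_2$ small. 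I would use this with $\delta:=\tfrac1{2k}K^{-1/2}$. Two features of $S$ come for free: it contains $1$; and it is symmetric, because $\rho_t$ is an $\ell^2$-isometry, so $\|\rho_{t^{-1}}\nu-\nu\|_2=\|\nu-\rho_t\nu\|_2=\|\rho_t\nu-\nu\|_2$.

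It remains to check $S\subset A^4$ and $S^k\subset A^4$. The point of using \emph{right} translations is that they compose without conjugation, $\rho_{t_1}\rho_{t_2}\cdots\rho_{t_k}=\rho_{t_1t_2\cdots t_k}$, so for $t_1,\dots,t_k\in S$, writing $s:=t_1\cdots t_k$, a telescoping sum gives
\[
\|\rho_s\nu-\nu\|_2\ \le\ \sum_{i=1}^{k}\bigl\|\rho_{t_1\cdots t_{i-1}}\bigl(\rho_{t_i}\nu-\nu\bigr)\bigr\|_2\ =\ \sum_{i=1}^{k}\|\rho_{t_i}\nu-\nu\|_2\ \le\ k\delta|A|^{1/2}\ =\ \tfrac12\sqrt{|A|/K}.
\]
On the other hand $\operatorname{supp}(\rho_s\nu)=A^2s^{-1}$, which meets $\operatorname{supp}\nu=A^2$ iff $s\in A^2(A^2)^{-1}=A^4$; so if $s\notin A^4$ then $\rho_s\nu$ and $\nu$ have disjoint supports and, $\rho_s$ being an isometry, $\|\rho_s\nu-\nu\|_2^2=2\|\nu\|_2^2\ge2|A|/K$, contradicting the display. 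Hence $s\in A^4$, i.e.\ $S^k\subset A^4$; the case $k=1$ gives $S\subset A^4$. This finishes the argument, with $C(K,k):=c\bigl(K,\tfrac1{2k}K^{-1/2}\bigr)^{-1}$.

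The main obstacle is the Croot--Sisask lemma itself. A single pass --- e.g.\ averaging $\|1_A-\rho_g1_A\|_2^2$ over $g\in A^2$ and applying Markov --- only yields $\|1_A-\rho_g1_A\|_2^2\lesssim K|A|$ for a positive density of $g$, which is far too weak to survive multiplying $k$ elements together (it does not even forbid disjoint supports). The amplification to the $\delta|A|^{1/2}$ bound via $m$ of order $\delta^{-2}$ independent samples (equivalently, working in $\ell^p$ for large $p$) is the real content; it is why $c(K,\delta)$ is only exponentially small in $1/\delta^2$, hence why $C(K,k)$ comes out of roughly the shape $K^{O(k^2K)}$ --- ineffective-looking but entirely adequate downstream. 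Everything else (the support trick, the symmetry of $S$, the conjugation-free telescoping for right translations) is bookkeeping.
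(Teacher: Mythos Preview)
Your argument is correct. Note, however, that the paper does not actually supply a proof of this lemma: it is stated with attribution to Sanders, Croot--Sisask and Hrushovski, and followed only by a one-sentence description of each of the three independent approaches. So there is no detailed ``paper's proof'' to compare against; your write-up is already considerably more detailed than what the paper offers.

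Your route is precisely the Croot--Sisask one that the paper alludes to (``use the probabilistic method to find the subset $S$''). The paper's slightly longer sketch is reserved for Sanders' combinatorial argument: one finds $B\subset A$ such that a positive proportion of $g\in A^4$ satisfy $|gAB\,\Delta\,AB|\le\tfrac1k|AB|$, an $\ell^1$ almost-invariance statement obtained by direct iteration rather than random sampling. Both land on the same conclusion with comparable (explicit, roughly $K^{O(k^2K)}$-type) bounds; Hrushovski's model-theoretic proof is the one that is ineffective.

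One small slip worth fixing: with your convention $\nu=\mu*1_A$ one has $\nu(z)=|A\cap zA|/|A|$, but your sampled average $\mathbb{E} F_{\vec x}(z)=\mathbb{E}_{x\in A}1_A(zx^{-1})$ equals $|A\cap Az|/|A|$, which in a non-abelian group is a different function. Either redefine $\nu$ as $1_A*\mu$ (so $\nu(z)=|A\cap Az|/|A|$) or sample with left translates instead; every property you use (support $=A^2$, $\|\nu\|_1=|A|$, $\|\nu\|_2^2\ge|A|/K$, the disjoint-support and telescoping computations) holds verbatim for either choice, so the argument is unaffected.
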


In other words, this lemma allows to take a ``square root'', or $k$-th root, of any given approximate group. The lemma was proved independently by Sanders using a combinatorial argument (showing the existence of a subset $B \subset A$ such that there is a non-trivial proportion of elements $g \in A^4$ such that $|gAB \Delta AB|$ is at most $\frac{1}{k}|AB|$ say), by Croot-Sisask (who use the probabilistic method to find the subset $S$) and by Hrushovski (using tools from Model Theory). \\

Using this lemma, we can define a base of neighborhoods of the identity in $\langle \A \rangle$ as follows. For each $A_n$ and each $k \geq 1$ pick a subset $S_{n,k}$ as given by the above lemma. In fact a rather straightforward strengthening of the above lemma ensures that we can pick $S_{n,k}$ in such a way that $S_{n,k+1}^2 \subset S_{n,k}$ and $gS_{n,k+1}g^{-1} \in S_{n,k}$ for all $g \in A$.\\

Set $\SSS_k:= \prod_{\mathcal{U}} S_{n,k}$. This defines a topology on $\langle \A \rangle$ with the $\SSS_k$ as a base of neighorhoods of the identity. Moreover the intersection $\Gamma:= \cap_{k \geq 1} \SSS_k$, where $\SSS_k:= \prod_{\mathcal{U}} S_{n,k}$ is a normal subgroup of $\langle \A \rangle$. We define $G$ as the quotient of $\langle \A^4 \rangle/\Gamma$. In this topology $\A$ mod $\Gamma$ is a compact neighborhood of the identity.\\

Given a Hausdorff topological group with a nested sequence of symmetric neighborhoods of the identity $S_{i+1}^2 \subset S_i$ a classical construction of Birkhoff and Kakutani \cite{birkhoff, montgomery-zippin} yields a left invariant distance compatible with the topology (in which the $S_i$ are essentially the balls of radius $\frac{1}{2^i}$ around the identity). We can do this here with the $\SSS_k$'s and obtain a left invariant distance $d$ on $G$.\\

Then the local compactness of $G$ is proven in the usual way by establishing completeness with respect to $d$ and total boundedness (i.e. $\forall \textnormal{ }0< \eps < r$ every ball of radius $r$ can be covered by $\leq C(\eps,r)$ balls of radius $\eps$). This follows from the Ruzsa covering lemma. Completeness follows by the countable saturation property of ultrafilters. \\

In the Birkhoff-Kakutani construction of the left invariant distance $d$, balls can be approximated by finite products of the $S_i$'s. Since the $\SSS_k$'s are ultra-product sets, i.e. given by the ultra-product of subsets of $A_n$, we obtain that balls in $G$, and more generally any compact subset of $G$, can be approximated by ultra-product sets.

We summarize the above discussion in the following statement:

\begin{theorem}{(Hrushovski local model theorem, \cite[Theorem 4.2.]{hrushovski})}\label{hrushov} There exists a locally compact group $G$ and a group homomorphism
$$\pi: \langle \A \rangle \to G$$
such that $\pi(\A)$ is a compact neighborhood of the identity in $G$, and for every $F \subset O \subset \pi(\A^4)$ with $F$ closed and $O$ open subsets, there are $B_n \subset A_n^4$ such that
$$\pi^{-1}(F) \subset \B \subset \pi^{-1}(O),$$
where $\B:=\prod_{\mathcal{U}} B_n$.
\end{theorem}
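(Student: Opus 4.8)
The plan is to turn the construction sketched above into a rigorous argument: build a group topology on $\langle\A\rangle$ from iterated ``square roots'' of $\A$, pass to its maximal Hausdorff quotient $G$, verify that $G$ is locally compact by combining the Ruzsa covering lemma with the countable saturation of ultraproducts, and finally read off the approximation property from the Birkhoff--Kakutani construction of an invariant metric.

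\emph{Step A (the topology).} First I would apply the Sanders--Croot-Sisask--Hrushovski lemma in its strengthened form to obtain, for each $n$, a nested family of symmetric sets $1\in S_{n,k}=S_{n,k}^{-1}\subset A_n^4$ with $S_{n,k+1}^2\subset S_{n,k}$, with $aS_{n,k+1}a^{-1}\subset S_{n,k}$ for every $a\in A_n$, and with $|S_{n,k}|\geq|A_n|/C(K,k)$. Setting $\SSS_k:=\prod_{\mathcal U}S_{n,k}$, the size bound is inherited by the ultraproduct, so each $\SSS_k$ is large in $\A$, and the other two relations give $\SSS_{k+1}^2\subset\SSS_k$, $\SSS_k=\SSS_k^{-1}\ni 1$, and $\a\SSS_{k+1}\a^{-1}\subset\SSS_k$ for all $\a\in\A$. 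Iterating the last relation along a word in $\A$ shows that for every $g\in\langle\A\rangle$ and every $k$ there is $\ell$ with $g\SSS_\ell g^{-1}\subset\SSS_k$; together with the other two relations this is exactly the standard criterion for $\{\SSS_k\}$ to be a neighbourhood base at the identity of a left-invariant group topology on $\langle\A\rangle$. The set $\Gamma:=\bigcap_k\SSS_k$ is then a closed normal subgroup (normality because conjugation by a generator maps $\SSS_{k+1}$ into $\SSS_k$), and I take $G:=\langle\A\rangle/\Gamma$, which equals $\langle\A^4\rangle/\Gamma$, with $\pi$ the quotient homomorphism. By construction $G$ is Hausdorff, $\pi(\A)=\A/\Gamma$ is a neighbourhood of $1$, and $\{\SSS_k/\Gamma\}$ is a countable base at $1$.

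\emph{Step B (local compactness).} By the Birkhoff--Kakutani theorem applied to the nested family $\SSS_k/\Gamma$ there is a left-invariant metric $d$ on $G$ compatible with the topology, in which $\SSS_k/\Gamma$ is comparable to the $2^{-k}$-ball. It then suffices to show $\pi(\A)$ is compact, i.e. complete and totally bounded. For total boundedness one covers each $S_{n,k}$ by a bounded number of translates of $S_{n,\ell}$: since $S_{n,k}S_{n,\ell}\subset A_n^8$ has size at most $K^7|A_n|\leq K^7C(K,\ell)|S_{n,\ell}|$, Lemma \ref{ruzsa} gives $S_{n,k}\subset X_nS_{n,\ell}S_{n,\ell}^{-1}\subset X_nS_{n,\ell-1}$ with $|X_n|$ bounded in terms of $K,\ell$; taking ultraproducts covers $\SSS_k/\Gamma$ by boundedly many $2^{-(\ell-1)}$-balls. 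Completeness of $(G,d)$ follows from the $\aleph_1$-saturation of the ultraproduct: a $d$-Cauchy sequence of $\Gamma$-cosets is encoded by a countable family of conditions over the index set which is finitely satisfiable for $\mathcal U$-almost all $n$, hence realised by an element of $\langle\A\rangle$. A complete totally bounded metric space is compact, so $\pi(\A)$ is compact, $G$ is locally compact, and second countable by the countable base. I expect this step --- the completeness argument via saturation, and the verification that the $\SSS_k$-relations genuinely yield a topological group --- to be the main obstacle.

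\emph{Step C (approximation).} In the Birkhoff--Kakutani metric every ball is, up to the usual two-sided comparison, a finite product $\SSS_{k_1}\cdots\SSS_{k_m}$, so any compact subset of $G$ lies between finite unions of $\A^4$-translates of such products. Now left translation by an element of $\A^4=\prod_{\mathcal U}A_n^4$, together with finite products and finite unions, all commute with $\prod_{\mathcal U}(\cdot)$; and since each $S_{n,k}\subset A_n^4$ and $A_n^4$ absorbs any bounded product of the $S_{n,k}$ into itself up to the covering bound, every such finite union of translated products has the form $\prod_{\mathcal U}B_n$ with $B_n\subset A_n^4$ (replacing $B_n$ by $B_n\cap A_n^4$ if need be). Thus, given a closed set $F$ and an open set $O$ with $F\subset O\subset\pi(\A^4)$, compactness of $\pi^{-1}(F)/\Gamma$ and openness of $O$ let me interpose such a basic set $V$ with $F$ contained in the interior of $V$ and $V\subset O$; then $\B:=\pi^{-1}(V)=\prod_{\mathcal U}B_n$ with $B_n\subset A_n^4$ satisfies $\pi^{-1}(F)\subset\B\subset\pi^{-1}(O)$, as required.
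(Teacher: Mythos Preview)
Your proposal is correct and follows essentially the same route as the paper: the paper presents Theorem~\ref{hrushov} as a summary of the construction in \S3.3, namely the Sanders--Croot-Sisask--Hrushovski nested sets $\SSS_k$, the quotient by $\Gamma=\bigcap_k\SSS_k$, the Birkhoff--Kakutani metric, local compactness via Ruzsa covering (total boundedness) plus countable saturation (completeness), and the approximation property from the fact that metric balls are sandwiched between finite products of the $\SSS_k$'s. Your Steps A--C flesh out precisely these points; the only place to be slightly more careful than ``replace $B_n$ by $B_n\cap A_n^4$'' is in Step~C, where one should first choose the interpolating basic set $V$ so that its preimage already sits inside $\A^4$ (e.g.\ by picking the translating elements in $\A^4$ and taking $k$ large enough that the relevant product of $\SSS_k$'s stays in $\A^4$), but this is a routine adjustment and does not affect the argument.
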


\noindent { \bf Remarks.}

a) If $G$ is the trivial group, then the theorem implies that the $A_n$'s are finite groups for almost all indices $n$ w.r.t the ultrafilter $\mathcal{U}$. Indeed take $O=F=\{1\}$. Then according to the theorem there is $\B= \prod_{\mathcal{U}} B_n \subset \A$ such that $B = \ker \pi = <A>$. Thus A is stable under multiplication. Hence so are the $A_n$ for $\mathcal{U}$-almost every $n$.

b) Similarly if the ``model group'' $G$ is totally disconnected, then we can conclude that there is a finite group $H_n \subset A_n^{C}$ for some bounded $C$ such that $A_n$ is contained in boundedly translates of $H_n$ (exercise: this implies that $H_n$ and $A_n^2$ are commensurable). Indeed a totally disconnected locally compact group contains an open compact subgroup (van Dantzig). According to the theorem it must be an ultra-product set $\HH:=\prod_{\mathcal{U}} H_n$ and the $H_n$ must be stable under multiplication for $\mathcal{U}$-almost every $n$, i.e. be finite subgroups. Since $\A$ is compact in projection to $G$, it must be finitely covered by $\HH$.

c) Recall the Gleason-Yamabe structure theorem for locally compact groups:

\begin{theorem}[Gleason-Yamabe \cite{gleason-annals,yamabe2}] \label{gleason-yamabe} If $G$ is a locally compact group, then $G$ has an open subgroup $G'$ which is a ``generalized Lie group'' in the sense that every identity neighborhood contains a compact subgroup $H \vartriangleleft G'$ such that $G'/H$ is a Lie group with finitely many connected components.
\end{theorem}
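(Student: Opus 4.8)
The plan is to follow the classical route, isolating the \emph{Gleason--Yamabe lemmas} as the technical core and treating the two extreme regimes separately. First I would reduce to a compactly generated group: replacing $G$ by the open subgroup generated by a compact symmetric identity neighbourhood $V$, it suffices to prove the statement for $G=\langle V\rangle$, since an open subgroup of $\langle V\rangle$ is open in $G$. Two special cases are then essentially known. If $G$ is compact, the Peter--Weyl theorem embeds $G$ into a product of unitary groups, so every identity neighbourhood contains a closed --- hence compact --- normal subgroup $H$ with $G/H$ a compact Lie group, and $G$ is a projective limit of Lie groups. If $G$ is totally disconnected, van Dantzig's theorem provides a compact open subgroup $G'$, and we are back in the compact case. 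The real content is to interpolate between these, i.e.\ to control the ``small subgroups.''

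The heart of the matter is to construct, after passing to an open subgroup and quotienting by a suitable compact normal subgroup, a \emph{Gleason metric}: a left-invariant metric $d$ on $G$ for which small elements satisfy the escape estimate $d(g^n,1)\gtrsim n\,d(g,1)$ for $n$ up to a fixed fraction of $1/d(g,1)$, together with the commutator estimate $d([g,h],1)\lesssim d(g,1)\,d(h,1)$. This is precisely what the Gleason--Yamabe lemmas deliver. I would build $d$ by the Birkhoff--Kakutani procedure from a nested sequence of neighbourhoods produced by \emph{Gleason functions} --- bump functions on $G$ obtained by convolving indicators of small neighbourhoods against Haar measure and averaging, whose translates move in a controlled, Lipschitz fashion --- and then verify the escape and commutator inequalities for the resulting metric. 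Arranging in addition that the compact normal subgroup one quotients by can be chosen inside any prescribed identity neighbourhood is the main obstacle: this is where local compactness (Haar measure and the compactness of $V$) is genuinely exploited, and it is exactly the step the later lectures will transplant to approximate groups.

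Granting a Gleason metric on $G$ modulo a compact normal subgroup, the last step is to show that a locally compact group with a Gleason metric and no small subgroups is a Lie group. The escape property forces every sufficiently small element to lie on a unique one-parameter subgroup, so the set $L$ of one-parameter subgroups is in bijection with a neighbourhood of $1$; the commutator estimate upgrades this to make $L$ a real vector space equipped with a Lie bracket defined as a renormalised limit of group commutators, and $L$ is finite-dimensional because it is locally compact. The adjoint action gives a continuous representation $G\to\GL(L)$ whose kernel is central and itself NSS, hence Lie, while $\exp\colon L\to G$ is a local homeomorphism intertwining the Baker--Campbell--Hausdorff product with the group law; this exhibits $G$ near the identity as an analytic local group, so $G$ is a Lie group. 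Finitely many connected components is then arranged by passing once more to the open subgroup generated by $G^0$ together with a compact identity neighbourhood, and $H$ can be forced into any prescribed neighbourhood by iterating the construction inside the compact kernels that appear (again via Peter--Weyl). Assembling these steps yields the open subgroup $G'$ and the family of compact normal subgroups $H$ claimed in the statement.
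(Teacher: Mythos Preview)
Your outline follows essentially the same classical route as the paper: isolate the Gleason--Yamabe lemmas, reduce to the NSS case, then show an NSS group is Lie via one-parameter subgroups and the adjoint representation. The terminology differs (you speak of a ``Gleason metric'' in the style of Tao's book, while the paper works directly with escape norms $\|g\|_U = 1/n_U(g)$), but the substance is the same, and your endgame for the NSS case matches the paper's almost exactly.

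The one place where your proposal is noticeably vaguer than the paper is the reduction to NSS. You correctly flag ``arranging that the compact normal subgroup one quotients by can be chosen inside any prescribed identity neighbourhood'' as the main obstacle, but you do not say how to overcome it beyond invoking the lemmas and Peter--Weyl. The paper's mechanism is explicit and worth naming: from the first Gleason--Yamabe lemma one derives a \emph{subgroup trapping} corollary --- for every neighbourhood $V$ of $1$ there is a smaller $U$ such that the subgroup generated by \emph{all} subgroups contained in $U$ has compact closure $H\subset V$. One then applies Peter--Weyl not to $G$ but to this trapped compact group $H$, obtaining a closed $N\lhd H$ with $H/N$ NSS; one checks $N$ is normalised by a small open neighbourhood $W$, and then --- having first passed to an \emph{almost connected} open subgroup (via van Dantzig on $G/G^0$) rather than merely a compactly generated one --- uses that $\langle W\rangle$ has finite index to intersect the finitely many conjugates of $N$ into a genuinely normal $N'$ with $G/N'$ NSS. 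Your initial reduction to $\langle V\rangle$ compactly generated would not by itself give this finite-index step. None of this is a wrong turn on your part, but the subgroup-trapping step is precisely the bridge between the bump-function estimates and the NSS reduction, and it is also the step the paper later transplants verbatim to approximate groups, so it deserves to be stated rather than absorbed into ``this is what the lemmas deliver.''
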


N.B. If $G$ is compact, the above boils down to the Peter-Weyl theorem (which is itself an application of the spectral theorem for self-adjoint convolution operators acting on $L^2(G)$). It is worth noting that the proof of Theorem \ref{gleason-yamabe} for non-compact groups is not independent of the Peter-Weyl theorem as it makes use of it in the crucial reduction step to the NSS (No Small Subgroup) case.\\

A consequence of this is that attached to every non-compact locally compact group there is a canonically defined non-compact connected Lie group $L$ with no non-trivial compact normal subgroups. Indeed $L$ is the connected component of the identity of $G'/H$ modulo its maximal normal compact subgroup. Its isomorphism class is independent of $G'$ and $H$. The connected component $G^0$ is compact if and only if $L$ is trivial.\\

In the setting of Theorem \ref{hrushov}, we call $L$ the \emph{Hrushovski Lie group} associated to the $A_n$'s. If it is trivial, then the $A_n^2$ are commensurable to finite groups (this is Remark b) above). In order to understand what happens if $L$ is non-trivial, we need to go into the proof of the Gleason-Yamabe theorem. This is the aim of Step 2. At the end (Step 3) we establish that $L$ is a simply connected nilpotent Lie group.

\subsection{The Gleason-Yamabe theorem}

 Let us digress for a little while  and discuss the proof of the Gleason-Yamabe structure theorem for locally compact groups (Theorem \ref{gleason-yamabe}). At the heart of it lie the so-called Gleason-Yamabe lemmas (Lemmas \ref{GY1} and \ref{GY2} below). They serve a dual purpose:

 \begin{enumerate}
 \item reduce the structure theorem to the NSS case (i.e. the case when $G$ has an identity neighborhood with no non-trivial subgroup).
 \item show that if $G$  is NSS, the set of one-parameter subgroups (i.e. homomorphisms from $(\R, +)$ to $G$) forms a vector space, thus giving rise to the sought-after Lie algebra and Lie group structure.
 \end{enumerate}

Historically, Gleason \cite{gleason-annals} used a construction closely related to Lemma \ref{GY2} below to prove Theorem \ref{gleason-yamabe} in the case of NSS connected locally compact groups of finite topological dimension. It was Yamabe \cite{yamabe1, yamabe2} who first understood how to deal with the general (non NSS) case by reducing it to the Peter-Weyl theorem using a variant of Gleason's argument (encapsulated in Lemma \ref{GY1}). We stress that there are two important cases in which the theorem is much easier to prove: the compact case (Peter-Weyl), and the totally disconnected case (van Dantzig).\\

Suppose $G$ is a locally compact group and pick a compact identity neighborhood $V$ in $G$.
Given a subset $Q \subset V$, let $n_V(Q)$ be the largest integer such that $1,Q,\ldots,Q^{n-1}$ are all contained in $V$. Clearly $n_V(Q) >1$ iff $Q \subset V$, and $n_V(Q)=+\infty$ iff the subgroup $\langle Q \rangle$ lies in $V$ entirely. Set $n_V(g)=n_V(\{g\})$.

\begin{lemma}[Gleason-Yamabe 1]\label{GY1} There is $C=C_V>0$ such that for every $K \geq 1$, there is an identity neighborhood $U \subset V$, such that for every subset $Q \subset U$,
\begin{itemize}
\item either $n_{V^4}(Q) \geq K n_V(Q),$
\item or there is $g \in Q$ such that $n_U(g) \leq KC_V n_V(Q)$.
\end{itemize}
\end{lemma}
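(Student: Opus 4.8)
The plan is to derive the dichotomy from a suitable \emph{Gleason function} adapted to $V$, followed by an elementary telescoping and pigeon-hole argument; this is in essence Yamabe's quantitative form of Gleason's escape argument. First I would invoke the construction of such a function: for the fixed compact identity neighbourhood $V$ there is a continuous $\phi\colon G\to[0,1]$ with $\phi(1)=1$ and $\Supp\phi\subset V^4$ which is \emph{regular} in the following sense. Writing
$$\|g\|:=\sup_{x\in G}|\phi(gx)-\phi(x)|,$$
there are a constant $C_0=C_0(V)>0$ and an identity neighbourhood $W\subset V$ such that the second-difference estimate
$$\sup_{x\in G}\bigl|\phi(ghx)-\phi(gx)-\phi(hx)+\phi(x)\bigr|\le C_0\,\|g\|\,\|h\|$$
holds for all $g,h\in W$. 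Such a $\phi$ is produced by convolving indicator functions of a nested family of small symmetric neighbourhoods against (a truncation of) Haar measure; I would import its construction and this estimate as a black box, since it is the genuinely hard classical core of Gleason--Yamabe theory (Gleason \cite{gleason-annals}, \cite{montgomery-zippin}, \cite{tao-hilbert}, cf.\ Lemma~\ref{GY2}). What one gets for free is that $\|\cdot\|$ is a continuous, symmetric, subadditive length ($\|1\|=0$, $\|g^{-1}\|=\|g\|$, $\|gh\|\le\|g\|+\|h\|$) and, taking $x=1$ in the supremum, that $\|g\|\ge|\phi(g)-1|=1$ whenever $g\notin V^4$.

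Next I would extract from the second-difference estimate a quantitative single-element escape bound: for $g\in W$ and all $n\in\N$,
$$\|g^n\|\ge n\|g\|-\binom n2 C_0\|g\|^2 .$$
This follows by two telescopings: with $\psi(y):=\phi(gy)-\phi(y)$ one has $\phi(g^nx)-\phi(x)=\sum_{j=0}^{n-1}\psi(g^jx)$, and a further telescoping together with the estimate above applied with both arguments equal to $g$ gives $\sup_x|\psi(g^jx)-\psi(x)|\le jC_0\|g\|^2$; summing over $j$ and using $\sup_x|\psi|=\|g\|$ yields the bound. In particular, if $n\le 1+\tfrac1{C_0\|g\|}$ then $\|g^n\|\ge\tfrac12 n\|g\|$.

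To finish, set $C_V:=2/C_0$. Given $K\ge 1$, I would choose an identity neighbourhood $U\subset W$ so small that $\sup_{h\in U}\|h\|<\tfrac1{2C_0}$, which is possible since $\|\cdot\|$ is continuous and vanishes at $1$ (in fact this $U$ need not depend on $K$). Let $Q\subset U$, and put $m:=n_V(Q)$ and $M:=n_{V^4}(Q)$; as $Q\subset V$ we have $M\ge m$, and if $m=\infty$ the first alternative is automatic, so I may assume $m<\infty$ and that the first alternative fails, i.e.\ $M<Km$ (so $M<\infty$ too). By definition of $M$ there is a word $w=g_1\cdots g_M$ with each $g_i\in Q$ and $w\notin V^4$, so
$$1\le\|w\|\le\sum_{i=1}^{M}\|g_i\|\le M\max_i\|g_i\|<Km\max_i\|g_i\|,$$
and hence some $g:=g_{i_0}\in Q$ satisfies $\|g\|>\tfrac1{Km}$. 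I then bound $n_U(g)$: if $n_U(g)>N:=\lceil\tfrac1{C_0\|g\|}\rceil$, then $1,g,\dots,g^N\in U\subset W$, so the escape bound (valid since $N\le1+\tfrac1{C_0\|g\|}$) gives $\|g^N\|\ge\tfrac12 N\|g\|\ge\tfrac1{2C_0}$, contradicting $g^N\in U$. Therefore $n_U(g)\le N\le\tfrac2{C_0\|g\|}<\tfrac2{C_0}Km=K\,C_V\,n_V(Q)$, which is the second alternative. The main obstacle lies entirely in the first paragraph: constructing the Gleason function $\phi$ and proving its second-difference estimate — Gleason's lemma itself — which I would not reprove; everything after it is routine bookkeeping.
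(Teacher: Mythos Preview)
Your argument has a genuine gap in the very black box you identify as the ``hard core'': the second-difference estimate
\[
\sup_{x}\bigl|\phi(ghx)-\phi(gx)-\phi(hx)+\phi(x)\bigr|\le C_0\,\|g\|\,\|h\|
\]
for a single $Q$-independent function $\phi$ (with $\|g\|:=\|\partial_g\phi\|_\infty$) is precisely the content of Gleason's lemma, which in all the references you cite --- Gleason, Montgomery--Zippin, Tao's book, and indeed Lemma~\ref{GY2} of the present paper --- is proved \emph{under the NSS hypothesis}. But Lemma~\ref{GY1} is exactly the tool used to reduce the Gleason--Yamabe theorem to the NSS case; invoking an NSS-dependent estimate to prove it is circular. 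The convolution construction you sketch does not give the multiplicative bound in general: writing $\phi=\psi_1*\psi_2$, one gets $\|\partial_h\partial_g\phi\|_\infty\lesssim\|\partial_g\psi_1\|_\infty\|\partial_h\psi_2\|_\infty$, but there is no way to control $\|\partial_g\psi_i\|_\infty$ by $\|\partial_g\phi\|_\infty$ --- convolution only goes the wrong way. A further symptom: you note that your $U$ ``need not depend on $K$'', whereas in the paper $U$ genuinely depends on $K$; this independence is an artefact of the illegitimate black box.

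The paper's proof avoids this circularity by the key idea you are missing: it makes $\phi$ \emph{depend on $Q$}. Specifically, one of the convolution factors is the escape-time function for $Q$ itself, $\psi_1(x)=(1-\tfrac{1}{n_V(Q)}\inf\{n:x\in Q^nV\})^+$, which gives $\|\partial_g\psi_1\|_\infty\le 1/n_V(Q)$ for $g\in Q$ directly by construction. The other factor $\psi_2$ is a generic escape-time function for a neighbourhood $W$ chosen with $W^L\subset V$ and $L$ large in terms of $K$. This yields only the weaker inequality $\|\partial_g\phi\|_\infty\le C_V/n_U(g)+1/(2Kn_V(Q))$, but that is enough. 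Your telescoping/Taylor computation after the black box is correct and is essentially the same device the paper uses; the whole difficulty, however, lies in producing the right $\phi$, and for that the $Q$-dependent construction is not a detail but the point.
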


In other words, unless it takes a very long time for powers of $Q$ to venture outside of $V^4$ after they first pop out of $V$, the time it takes for powers of $Q$ to escape $V$ is comparable to the maximal time needed for powers of individual members of $Q$ to escape $U$.


A direct consequence of this is:
\begin{corollary}[subgroup trapping]\label{trapping} Let $G$ be a locally compact group. For every identity neighborhood $V$ in $G$, there is an identity neighborhood $U \subset V$ such that the subgroup $H$ generated by all subgroups contained in $U$ is contained in $V$.
\end{corollary}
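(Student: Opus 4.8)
\noindent\emph{Proof proposal.}
The plan is to derive this directly from Lemma~\ref{GY1}; the key point is that a set which is a \emph{union of subgroups} is, from the point of view of that lemma, never in the ``escape'' case. Replacing $V$ by a smaller compact identity neighborhood if necessary, we may assume $V$ is compact. For an identity neighborhood $U$ write $Q_U := \bigcup\{S : S \text{ a subgroup of } G,\ S\subseteq U\}$, so that $Q_U=Q_U^{-1}$, $1\in Q_U$, and the subgroup in the statement is $\langle Q_U\rangle$. Since $\langle Q_U\rangle=\bigcup_{j\ge 0}Q_U^{\,j}$, we have $\langle Q_U\rangle\subseteq V$ if and only if $n_V(Q_U)=+\infty$, so it suffices to produce $U\subseteq V$ with $n_V(Q_U)=+\infty$.

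The basic observation is that every $g\in Q_U$ lies in a subgroup contained in $U$, hence $\langle g\rangle\subseteq U$ and $n_U(g)=+\infty$. Consequently, applying Lemma~\ref{GY1} to $Q=Q_U$, the second alternative can never hold (unless $n_V(Q_U)=+\infty$, in which case we are done), so we always land in the first: $n_{V^4}(Q_U)\ge K\,n_V(Q_U)$. To iterate this I would fix a decreasing sequence of compact identity neighborhoods $V=V_0\supseteq V_1\supseteq\cdots$ with $V_{i+1}^4\subseteq V_i$, apply Lemma~\ref{GY1} at each scale $V_{i+1}$ with parameter $K=2$ to obtain a neighborhood $T_i\subseteq V_{i+1}$, and set $W_m:=T_0\cap\cdots\cap T_{m-1}$ (a neighborhood, being a finite intersection). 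For $Q=Q_{W_m}$ the observation above applies at every level $i<m$ (as $Q_{W_m}\subseteq W_m\subseteq T_i$), giving $n_{V_i}(Q_{W_m})\ge n_{V_{i+1}^4}(Q_{W_m})\ge 2\,n_{V_{i+1}}(Q_{W_m})$ for $i<m$; chaining down from $i=0$ and using $Q_{W_m}\subseteq W_m\subseteq V_m$,
$$
n_V(Q_{W_m})\ =\ n_{V_0}(Q_{W_m})\ \ge\ 2^{m}\,n_{V_m}(Q_{W_m})\ \ge\ 2^{m+1}.
$$

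It remains to upgrade ``arbitrarily large'' to ``infinite''. I would argue by contradiction: if $j_m:=n_V(Q_{W_m})$ were finite for every $m$ (necessarily $j_m\ge 2^{m+1}$), then for each $m$ we could pick $h_m\in Q_{W_m}^{\,j_m}\setminus V$; since $Q_{W_m}^{\,j_m-1}\subseteq V$ and $Q_{W_m}\subseteq W_m$, we get $h_m\in V\cdot W_m$. As $\overline V$ is compact and $W_m$ shrinks to $\{1\}$ (along the sequence, or along a net if $G$ is not metrizable), a convergent subnet of $(h_m)$ has its limit in $\overline V$; on the other hand each $h_m$ lies outside the target neighborhood, so the limit does too, and choosing the target neighborhood open with compact closure strictly inside $V$ makes these two facts contradictory. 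I expect this last step --- turning ``$h_m$ escapes'' into a genuine contradiction with ``the limit is trapped'', i.e.\ the handling of the boundary of $V$ and of the compactness of the ambient neighborhood --- to be the only delicate point. The algebraic core, namely that the escape alternative of Lemma~\ref{GY1} is invisible to unions of subgroups, so that the escape times can only grow as the neighborhood shrinks, is what makes the corollary a genuinely \emph{direct} consequence.
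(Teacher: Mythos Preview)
Your observation that a union of subgroups never triggers the ``escape'' alternative of Lemma~\ref{GY1} is the right starting point, and your iterated application through a chain $V=V_0\supseteq V_1\supseteq\cdots$ with $V_{i+1}^4\subseteq V_i$ legitimately yields $n_V(Q_{W_m})\ge 2^{m+1}$. This is close in spirit to the paper's first move (the paper instead fixes the scale and sends $K\to\infty$ in Lemma~\ref{GY1} to get $n_{B(4s)}(Q_t)/n_{B(s)}(Q_t)\to\infty$).

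The gap is in the final compactness step, and it is not merely a matter of choosing open versus closed neighborhoods. From $h_m\in V\cdot W_m$ and $h_m\notin V$ (with $W_m\to\{1\}$) any subnet limit $h$ can only be pinned to a \emph{boundary}: if the neighborhood is closed you get $h$ inside it but draw no contradiction from $h_m$ lying outside; if it is open you get $h$ outside it but then cannot place $h$ inside via the decomposition $h_m=v_mw_m$. Juggling two nested neighborhoods does not help --- one always ends up on $\partial V$, which is not contradictory.

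The paper closes this gap with a genuinely additional ingredient. Rather than tracking a single escaping element, it uses the unbounded ratio $n_{V^4}(Q_t)/n_V(Q_t)$ to show that the whole sets $Q_t^{\,n_V(Q_t)}$ subconverge (in the Hausdorff sense) to a compact \emph{subgroup} $H_s\subset V^4$ not contained in a smaller ball; Peter--Weyl is then applied to this compact group to pass to an NSS quotient and reach a contradiction. The paper says explicitly that the derivation ``uses Peter--Weyl's theorem one more time''. So what you flag as ``the only delicate point'' is in fact where a new structural idea enters, not a boundary technicality; your pointwise compactness argument cannot be completed as it stands.
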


\begin{proof} Take for $Q$ the union of all subgroups contained in $U$ and apply Lemma \ref{GY1}.
\end{proof}


\begin{proof}[Proof of Theorem \ref{gleason-yamabe} : reduction to the NSS case.] We claim that there exists an open subgroup $G'$ in $G$ and, for each compact neighbourhood $V$ of the identity in $G'$, a normal subgroup $N' \lhd G'$ such that $N' \subset V$ and such that $G'/N'$ is NSS. This is clearly sufficient to reduce Theorem \ref{gleason-yamabe} to the NSS case.

Up to passing to an open subgroup, we may assume that $G$ is almost connected, in the sense that its connected component of the identity is co-compact. Van Dantzig's theorem ensures that this is possible: indeed, every totally disconnected locally compact group has an open compact subgroup, so the pre-image in $G$ of a compact open subgroup of $G/G^0$, where $G^0$ is the connected component of $G$, is an almost connected open subgroup of $G$.

Let $V$ be a compact neighborhood of the identity in $G$. By Corollary \ref{trapping}, there is an open neighborhood of the identity $U \subset V$ such that the closure $H$ of the subgroup generated by all subgroups contained in $U$ is contained in $V$. Since $H$ is a compact group, we may apply the Peter-Weyl theorem to it. In particular, we may find a closed normal subgroup $N \vartriangleleft H$ contained in $U$ such that $H/N$ is NSS. This means that there is an open neighborhood of the identity $W_0$ in $G$ containing $N$ such that every subgroup of $H$ containing $N$ and contained in $W_0$ is in fact a subgroup of $N$. We can pick a smaller identity neighborhood $W$ of $G$, which contains $N$, and is such that $W^4 \subset W_0$. Then we see that every subgroup of $H$ contained in $W^3$ must lie in $N$, and in particular $gNg^{-1} \subset N$ for every $g \in W$, because $gNg^{-1}$ lies in $W^3 \subset U$ so $gNg^{-1}$ is indeed a subgroup of $H$. This means that $N$ is normalised by $\langle W \rangle$  and that $\langle W \rangle /N$ is NSS.

However $\langle W \rangle$ is an open subgroup of the almost connected group $G$. Hence it has finite index in $G$ (it contains the connected component $G^0$ and is open modulo $G^0$ in the profinite group $G/G^0$).  Thus there are only finitely many conjugates $g_iNg_i^{-1}$ of $N$ in $G$. So $N':=\cap g_iNg_i^{-1}$ is normal in $G$ and contained in the neighborhood of the identity $W':=\cap g_iWg_i^{-1}$. Moreover any subgroup contained in $W'$ is contained in $N'$, so $G/N'$ is NSS.
\end{proof}

\begin{proof}[Proof of  Corollary \ref{trapping}] We now derive Corollary \ref{trapping} from Lemma \ref{GY1}. This uses Peter-Weyl's theorem one more time. Let $B(t)$ be the closed ball of radius $t$ around the identity (for a fixed left invariant distance), let $Q_t$ be the union of all subgroups contained in $B(t)$. Fix $s>0$. For $t<s$, $Q_t \subset B(s)$, and $n_{B(s)}(g)=+\infty$ for $g \in Q_t$. So Lemma \ref{GY1} implies that either $n_{B(s)}(Q_t)=+\infty$ in which case $\langle Q_t \rangle$ is relatively compact and we are done, or $\frac{n_{B(4s)}(Q_t)}{n_{B(s)}(Q_t)}$ is unbounded as $t \to 0$. This implies that $Q_t^{n_{B(s)}(Q_t)}$ converges along a sequence of $t \to 0$ towards a compact subgroup $H_s \subset B(4s)$ not contained in $B(s/2)$. By diagonal process we obtain this way a nested family of groups $H_s \leq H_{s'}$ if $s\leq s'$. But by Peter-Weyl, since $H_1$ is compact, it has a closed normal subgroup $N_1$ such that $H_1/N_1$ is NSS. Now doing the above construction with $B(s)N_1$ in place of $B(s)$ we obtain a nested family of subgroups $H'_s \subset (B(s)N_1)^4 \cap H_1 \subset B(4s)N_1 \cap H_1$ such that $H'_s$ is not contained in $B(s/2)N_1$. This contradicts the fact that $H_1/N_1$ is NSS. \end{proof}

We now pass to the second part of the Gleason-Yamabe lemmas and thus assume that $G$ is NSS. It is then easy to see (arguing by contradiction and taking a limit) that $n_U$ and $n_V$ are comparable if $U$ and $V$ are any two small enough identity neighborhoods, i.e.
\begin{equation}\label{comp}
\frac{1}{C}n_U(Q) \leq n_V(Q) \leq C n_U(Q)
\end{equation}
for some $C=C(U,V)>0$ and all subsets $Q$. We then define for $g \in G$
$$\|g\|_U:= \frac{1}{n_U(g)},$$
and call it the \emph{escape norm} associated to $U$.

\begin{lemma}[Gleason-Yamabe 2]\label{GY2} Suppose $G$ is a locally compact group which is NSS and $V$ is a neighborhood of the identity. Then $V$ contains a compact identity neighborhood $U$ such that, for some $C=C_U>0$, and all $g,h \in U$,
\begin{enumerate}
\item $\|h^{-1}gh\|_U \leq C \|g\|_U$
\item $\|gh\|_U \leq C(\|g\|_U + \|h\|_U)$
\item $\|[g,h]\|_U \leq C \|g\|_U \cdot \|h\|_U$
\end{enumerate}
\end{lemma}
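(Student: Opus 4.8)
\textbf{Proof strategy for the Gleason--Yamabe lemma (Lemma \ref{GY2}).} The plan is to build the neighborhood $U$ by a fixed-point/averaging construction modeled on Gleason's original argument, which converts the escape norm into something approximately subadditive and conjugation-invariant. First I would fix a small compact identity neighborhood $V_0 \subset V$ on which the group is NSS, so that by \eqref{comp} the escape norms associated to any two sufficiently small neighborhoods are comparable up to a multiplicative constant; this reduces all three inequalities to statements about a single norm $\|\cdot\| := \|\cdot\|_{V_0}$, at the cost of absorbing constants. The key analytic device is to pass from the ``combinatorial'' escape norm to a genuine function on $G$: choose a continuous compactly supported bump function $\phi$ supported near the identity and consider, for each $g$, the left-translation operator acting on $L^2(G)$ (w.r.t.\ Haar measure) by $g$, and measure $\|\lambda(g)\phi - \phi\|_2$. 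The NSS hypothesis, fed through Lemma \ref{GY1} (applied to cyclic $Q = \langle g \rangle$, which cannot be contained in a small neighborhood), gives the crucial two-sided comparison
$$c\,\|g\| \;\le\; \frac{\|\lambda(g)\phi-\phi\|_2}{\|\phi\|_2} \;\le\; C\,\|g\|$$
for $g$ in a suitable neighborhood $U \subset V_0$ and a suitable choice of $\phi$. Granting this comparison, properties (1) and (2) are essentially automatic: $\lambda$ is a unitary representation, so $\|\lambda(gh)\phi-\phi\|_2 \le \|\lambda(g)(\lambda(h)\phi-\phi)\|_2 + \|\lambda(g)\phi-\phi\|_2 = \|\lambda(h)\phi-\phi\|_2 + \|\lambda(g)\phi-\phi\|_2$, which yields subadditivity (2), and conjugation by the unitary $\lambda(h)$ followed by the same triangle-inequality manipulation yields the quasi-invariance (1).

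\textbf{The commutator estimate (3) is the main obstacle} and the reason a plain $L^2$ bump is not enough: the naive triangle inequality only gives $\|[g,h]\| \lesssim \|g\| + \|h\|$, not the product bound. The standard fix is to use a \emph{smooth} bump, i.e.\ to first replace $\phi$ by a function that has been regularized by convolution so that its left-translates vary in a Lipschitz, indeed ``$C^1$-like'', manner — concretely, one wants the right-translate-difference operator applied to $\lambda(g)\phi-\phi$ to pick up a second factor of a norm. One runs the Gleason argument twice: convolve $\phi$ with a second bump to obtain $\psi$ such that not only $\|\lambda(g)\psi - \psi\|_2 \asymp \|g\|\cdot\|\psi\|_2$ but also the ``mixed'' quantity $\|\lambda(g)\rho(h)\psi - \lambda(g)\psi - \rho(h)\psi + \psi\|_2$ (where $\rho$ is right translation, which commutes with $\lambda$) is bounded by $C\|g\|\cdot\|h\|\cdot\|\psi\|_2$. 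Since $\lambda([g,h])\psi - \psi$ can be algebraically rearranged, modulo terms controlled by this mixed second difference, into $\lambda(ghg^{-1}h^{-1})\psi - \psi$ with the first-order parts cancelling because left and right translations commute, one extracts exactly the product bound. The delicate point is that the regularized $\psi$ still satisfies the lower bound $\|\lambda(g)\psi-\psi\|_2 \gtrsim \|g\|\|\psi\|_2$ on a possibly smaller neighborhood; this again uses NSS and Lemma \ref{GY1}, plus a compactness/contradiction argument to see that the comparison constant can be taken uniform.

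\textbf{Assembling the pieces.} After constructing $\psi$ and verifying the two-sided comparison on a neighborhood $U$, I would define $C_U$ to be the maximum of the finitely many multiplicative constants accumulated (from \eqref{comp}, from the upper and lower $\psi$-comparisons, and from the explicit triangle-inequality manipulations), shrink $U$ once more if needed so that $U, U^{-1}, U^2$ all lie inside the domain where every inequality used is valid, and replace $U$ by a compact identity neighborhood contained in it (possible since $G$ is locally compact). All three claimed inequalities then follow for $g,h \in U$ by the computations sketched above. I expect the routine-but-lengthy parts to be: (i) checking that convolution against a compactly supported bump does not destroy the escape-norm lower bound, which is where one genuinely invokes NSS rather than just formal Hilbert-space manipulations; and (ii) the bookkeeping in the algebraic rearrangement of $\lambda([g,h])\psi - \psi$ into first differences plus a mixed second difference. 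Neither presents a conceptual difficulty once the regularized bump $\psi$ is in hand; the whole content of the lemma is really the existence of $\psi$ with the matching upper and lower $L^2$ bounds, which is Gleason's insight.
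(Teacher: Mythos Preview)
Your overall instinct---compare the escape norm to translation-differences of a carefully built bump function, and use convolution to control second differences---is exactly the Gleason mechanism the paper invokes. But two points deserve comment.

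\medskip

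\textbf{Items (1) and (2) are cheaper than you make them.} The paper does not route these through any bump-function comparison. Item (1) is immediate from the definition of $n_U$ together with the comparability \eqref{comp}: for $h$ ranging over the compact set $U$, the conjugates $hUh^{-1}$ all contain a fixed neighborhood, so $n_{hUh^{-1}}(g)$ and $n_U(g)$ are uniformly comparable. Item (2) is a direct consequence of Lemma \ref{GY1} applied to the two-element set $Q=\{g,h\}$: in the NSS regime \eqref{comp} forces $n_{V^4}(Q)\le C\,n_V(Q)$, so for $K>C$ the first alternative of Lemma \ref{GY1} is excluded and the second gives $\min(n_U(g),n_U(h))\le C'\,n_V(Q)$; since $(gh)^k$ is a word of length $2k$ in $Q$ one has $n_V(gh)\ge n_V(Q)/2$, and \eqref{comp} finishes. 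No Hilbert-space machinery is needed here.

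\medskip

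\textbf{Your route to (3) has a genuine gap.} You propose to control $\lambda([g,h])\psi-\psi$ by the mixed second difference $(\lambda(g)-I)(\rho(h)-I)\psi$, arguing that ``the first-order parts cancel because left and right translations commute''. But that commutativity is exactly why this cannot work: since $[\lambda(g),\rho(h)]=0$ as operators, the left--right mixed difference carries no information about the \emph{group} commutator $[g,h]=ghg^{-1}h^{-1}$. Concretely, one has $\lambda([g,h])-I=(\lambda(gh)-\lambda(hg))\lambda((hg)^{-1})=[L_g,L_h]\lambda((hg)^{-1})$ with $L_g=\lambda(g)-I$, so what must be bounded is the commutator of two \emph{left} difference operators applied to $\psi$. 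Your mixed quantity $(\lambda(g)-I)(\rho(h)-I)\psi$ does factor nicely through a convolution, but there is no algebraic identity converting $[L_g,L_h]\psi$ into it.

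The mechanism the paper points to (``similar to the proof of Lemma \ref{GY1}'') is the $L^\infty$ version: with $\phi=\psi_1*\psi_2$ one has the identity
\[
\partial_h\partial_g(\psi_1*\psi_2)(x)=\int(\partial_g\psi_1)(y)\,(\partial_{h^y}\psi_2)(y^{-1}x)\,dy,
\]
so the second \emph{left} difference of the convolution splits as a product of first differences of the factors (with a conjugate $h^y=y^{-1}hy$ appearing). Choosing $\psi_1,\psi_2$ to be staircase functions as in the proof of Lemma \ref{GY1} gives $\|\partial_h\partial_g\phi\|_\infty\le C\|g\|\|h\|$. The link to the commutator is then purely algebraic: $\partial_h\partial_g\phi-\partial_g\partial_h\phi$ equals, up to a translation of the argument, $\partial_{c}\phi$ for a suitable group commutator $c$ of $g^{\pm1},h^{\pm1}$, and the lower bound $\|c\|\le\|\partial_c\phi\|_\infty$ closes the argument. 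If you want to stay in $L^2$, the same convolution identity is available and the same algebra goes through; the point is that both differences must be on the same side.
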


Item 1) is clear from the definition and $(\ref{comp})$. Item 2) is an immediate consequence of the first Gleason-Yamabe lemma, i.e. Lemma \ref{GY1}, and of $(\ref{comp})$ (take $Q=\{g,h\}$). Item 3) can be proved by an argument similar that used in the proof of Lemma \ref{GY1} (see below).\\

Lemma \ref{GY2} is used to prove that the set $L(G)$ of one-parameter subgroups of $G$ forms a vector space on which (an open subgroup of) $G$ acts by conjugation with central kernel. From this point the end of the proof of Theorem \ref{gleason-yamabe} is fairly standard: one shows that $L(G)$ is locally compact, hence finite dimensional, thus (an open subgroup of) $G$ embeds in $\GL(\R^n)$ modulo the center and one concludes by the Cartan-von-Neumann theorems that $G$ is a Lie group.

The NSS assumption easily implies (taking the cyclic group generated by group elements closer and closer to the identity and passing to a limit) that $L(G)$ is non-trivial. In order to prove that $L(G)$ is a vector space, one needs to make sense of the formula:

$$\lim_{n \to +\infty} \left(X(\frac{t}{n}) Y(\frac{t}{n})\right)^n = (X + Y)(t)$$
given two one-parameter subgroups $X,Y \in L(G)$. This is where Item 2) of Lemma \ref{GY2} comes into play, because it allows to conclude that $(X(\frac{t}{n}) Y(\frac{t}{n}))^n$ remains in a fixed compact part of $G$ when $n$ tends to $+\infty$ and $t$ remains bounded. In fact, using Item 3), i.e. the commutator shrinking property, one shows that this is a Cauchy sequence (NSS groups are metrisable) and hence converges.

Finally, using Item 3) again, one proves that every small enough element of $G$ lies on a one-parameter subgroup. Then after passing to an open subgroup of $G$, we see that the kernel of the adjoint representation is central. This concludes the proof of Theorem \ref{gleason-yamabe}.\\

\noindent \emph{The proof of the Gleason-Yamabe lemmas.} Having explained how the Gleason-Yamabe theorem is derived from the Gleason-Yamabe lemmas, let us now say a few words about the proof of the lemmas themselves. As said earlier Items 1) and 2) follow easily from the NSS condition and Lemma \ref{GY1}, so we are left with proving Lemma \ref{GY1} and Item 3) of Lemma \ref{GY2}.

Before we start, it is instructive to notice that if $G$ were a Lie group, then both statements would be easy to prove using the exponential map $\exp: L(G) \to G$, as $\|\exp(X)\|_U$ is then comparable to $||X||$ for any fixed norm on $L(G)$. But we are precisely trying to establish the existence of an exponential map...

We now focus on Lemma \ref{GY1}. There are two bright ideas involved here: the first, say in the setting of Lemma \ref{GY1}, is to introduce a continuous non-negative bump function $\phi$, which vanishes outside $V^4$ and is equal to $1$ at the identity, and compare $\|g\|_U$ to the sup norm $||\partial_g \phi||_\infty$, where $\partial_g \phi(x):=\phi(g^{-1}x) - \phi(x)$. We make the following simple observations:

\begin{enumerate}
\item We have the triangle inequality $||\partial_{gh} \phi ||_\infty \leq ||\partial_g \phi ||_\infty + ||\partial_h \phi ||_\infty$,
\item If $||\partial_g \phi ||_\infty < 1$, then $g \in Supp(\phi) \subset V^4$.
\end{enumerate}
from which it follows immediately that $||g||_V \leq ||\partial_g \phi||_\infty$. We cannot expect that an inequality in the other direction holds for all $g$, because that would imply that $\phi$ is invariant under every subgroup contained in $V$. But, for every  $K \geq 1$ we will find $U$ small enough and build a function $\phi$ (depending on $K$,$V$) supported in $V^4$, such that $\phi(1)=1$,  and for which the following inequality holds for all  $Q \subset U$, all $g \in Q$, and  some $C_V>0$
\begin{equation}\label{eqq}||\partial_g \phi||_\infty \leq  C_V\frac{1}{n_U(g)} +  \frac{1}{2K }\cdot\frac{1}{n_V(Q)}.\end{equation}
Lemma \ref{GY1} follows easily. Indeed, by definition of $n_{V^4}(Q)$ there is a word $w$ of length $n_V(Q)$ with letters in $Q$ such that $w \notin V^4$.  Hence $||\partial_w \phi||_\infty \geq 1$ by $(2)$ above, and by the triangle inequality $(1)$ there must be $g \in Q$ such that $||\partial_g \phi||_\infty \geq \frac{1}{n_{V^4}(Q)}$. Suppose $n_{V^4}(Q)\leq K n_V(Q)$, then we conclude from $(\ref{eqq})$ that $n_U(g) \leq KC_V n_V(Q)$ as desired.\\

So things boil down to finding $\phi$ such that $(\ref{eqq})$ holds. If we were on a Lie group and $\phi$ were say of class $C^2$, because the second derivatives are controlled, we would have $\partial_{g^n} \phi \simeq n \partial_g \phi$ as long as $g^n \in U$ if $U$ is small enough. This follows immediately from the Taylor expansion of $\phi$ near the identity, or from the following abstract form of the Taylor formula, which always holds (direct check):
\begin{equation}\label{taylor}
\partial_{g^n} \phi = n \partial_g \phi + \sum_{i=0}^{n-1}  \partial_{g^i} \partial_g \phi
\end{equation}
The second term in the right hand side would then be negligible and it would follow that $||\partial_g \phi||_\infty \leq C||g||_U$, which is a strong form of $(\ref{eqq})$.\\

This is where the second bright idea comes in: making use of a (left-) Haar measure, we can define $\phi$ as the convolution of two bump functions $\phi(x):=\frac{\psi_1 *\psi_2(x)}{\psi_1 *\psi_2(1)}$, where $\psi_1 * \psi_2(x)=\int \psi_1(xy)\psi_2(y^{-1})dy$. The following simple formula allows to control the second derivatives of $\phi$ in terms of the first derivatives of $\psi_1$ and $\psi_2$:

$$\partial_h \partial_g (\psi_1 * \psi_2) = \int (\partial_g \psi_1)(y) \cdot (\partial_{h^y} \psi_2)(y^{-1}x) dy.$$
where $h^y=y^{-1}hy$. We take
$$\psi_1(x):=(1 - \frac{1}{n_V(Q)} \inf \{n\geq 0, x \in Q^nV\})^+$$
and
$$\psi_2(x):=(1-\frac{1}{L} \inf \{n\geq 0, x \in W^nV\})^+,$$ where $y^+:=\max\{0,y\}$ and $W$ is an identity neighborhood such that $W^{L} \subset V$ for some integer $L > 2K vol(V^3)/vol(V \cap V^{-1})$.

It is clear that $\psi_1$ and $\psi_2$ are equal to $1$ on $V$ and their supports are contained in $V^2$. Hence the support of $\phi$ lies in $V^4$, $||\psi_1 * \psi_2|| \leq vol(V^2)$ and $\psi_1*\psi_2(1) \geq vol(V \cap V^{-1})$.  Hence

$$||\phi||\leq \frac{vol(V^2)}{vol(V \cap V^{-1})}$$

We also have Lipschitz control on both $\psi_1$ and $\psi_2$, because if $g \in Q$, then $||\partial_g \psi_1 || \leq \frac{1}{n_V(Q)}$, while if $h \in W$, then $||\partial_h \psi_2 || \leq \frac{1}{L}$. Moreover if $g \in V$, then the support of $\partial_g \psi_1$ is contained in $V^3$.

Setting $U = \cap_{y \in V^3} y^{-1} W y$, we conclude that for all $g \in Q$ and $h \in U$:

\begin{equation}\label{dou}
||\partial_h \partial_g (\psi_1 * \psi_2) || \leq vol(V^3) \frac{1}{n_V(Q)} \frac{1}{L}.\
\end{equation}
Finally plugging $(\ref{dou})$ into the Taylor formula $(\ref{taylor})$, and writing $||\partial_{g^n} \phi|| \leq 2 ||\phi||$, we get that as long as $g^i \in U$ for all $i=0,\ldots,n$,

$$ ||\partial_g \phi|| \leq \frac{||\partial_{g^n} \phi||}{n} + \sum_0^{n-1} ||\partial_{g^i} \partial_g \phi|| \leq  C_V \frac{1}{n} + \frac{vol(V^3)}{vol(V \cap V^{-1})} \frac{1}{n_V(Q)} \frac{1}{L} \leq C_V \frac{1}{n} +\frac{1}{2Kn_V(Q)}$$
since $L > 2K vol(V^3)/vol(V \cap V^{-1})$, and where $C_V=2\frac{vol(V^2)}{vol(V \cap V^{-1})}$. We thus obtain $(\ref{eqq})$ and we are done.\\

This concludes the proof of Lemma \ref{GY1}. The proof of the commutator shrinking property, Item 3) of Lemma \ref{GY2}, is similar and we do not include it here. We also leave out the details of the remainder of the proof of Theorem \ref{gleason-yamabe}, namely the proof that the set of one-parameter subgroups is finite dimensional space on which $G$ acts with central kernel. We refer the reader to Tao's book \cite{tao-hilbert} for these missing items.

\subsection{The Gleason-Yamabe lemmas for approximate groups (Step 2)}

We now return to our main theorem (Theorems \ref{BGT} and \ref{strong}) and to the second step of its proof. Here the intent is to adapt the Gleason-Yamabe lemmas to the setting of approximate groups. In much the same way as these lemmas were used in the locally compact setting to exhibit a small normal subgroup modulo which the group is Lie, the adaptation of the Gleason-Yamabe lemmas to the approximate group setting will yield a finite normal subgroup modulo which the approximate group centralizes a non-trivial subgroup.

\bigskip

Given $g$, let $n_A(g)$ be the largest integer such that $1,g,\ldots,g^n$ belong to $A$.

\bigskip

In order to implement the Gleason-Yamabe lemmas in the approximate group setting, we need an assumption on the approximate group.

\begin{definition}[Strong approximate group]
A $K$-approximate group $A$ is said to be a strong approximate group if
\begin{enumerate}
\item if $1,g,\ldots , g^{1000} \in A^{100}$, then $g \in A$.
\item there is a subset $S=S^{-1} \subset A$ such that $(S^{A^4})^{10^6K^3} \subset A$ and if $1,g,\ldots, g^{10^6K^3} \in A$, then $g \in S$.
\end{enumerate}
\end{definition}

Here $S^{A^4}$ denotes the set of elements of the form $gsg^{-1}$ with $s \in S$ and $g \in A^4$. Observe that, as follows from this definition, if $A$ is a strong $K$-approximate subgroup of $G$, then

\begin{itemize}
\item $\forall g$, $n_A(g) \leq n_{A^{100}}(g) \leq 10^3 n_A(g)$,
\item  $10^6K^3n_S(g) \geq n_A(g)$.
\end{itemize}

 Set

$$||g||_{A}=\frac{1}{n_A(g)}$$
the \emph{escape norm} of $g$ w.r.t $A$. Note that $\|g\|_A=0$ iff the cyclic group $\langle g \rangle$ lies in $A$ entirely.

\begin{lemma}[Gleason-Yamabe lemmas, approximate group version] Given $K\geq 1$, there is $C=C(K)>0$ such that if $A$ is a strong $K$-approximate subgroup of a group $G$, then $\forall g,h \in A$,

\begin{enumerate}
\item $\|h^{-1}gh\|_A \leq C \|g\|_A$
\item $\|gh\|_A \leq C(\|g\|_A + \|h\|_A)$
\item $\|[g,h]\|_A \leq C \|g\|_A \cdot \|h\|_A$
\end{enumerate}

\end{lemma}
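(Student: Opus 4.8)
The strategy is to mimic, essentially line by line, the proof of the Gleason--Yamabe lemmas for locally compact groups given above, replacing the compact identity neighborhood $V$ and its Haar measure by the finite sets $A$, $A^4$, $A^{100}$ and counting measure. The two structural axioms built into the definition of a strong approximate group are precisely what is needed to make the locally compact argument go through: axiom (1) gives the comparability $n_A(g)\leq n_{A^{100}}(g)\leq 10^3\, n_A(g)$ (the analogue of estimate (\ref{comp}) which in the topological setting came from the NSS hypothesis and a compactness argument), while axiom (2) furnishes the conjugation-stable subset $S$ with $(S^{A^4})^{10^6K^3}\subset A$, playing the role of the auxiliary neighborhood $W$ with $W^L\subset V$ and $U=\cap_{y\in V^3} y^{-1}Wy$. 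Item (1) of the lemma then follows immediately from these comparabilities, exactly as in the topological case: conjugation by $h\in A$ sends a power word of $g$ in $A$ into $A^{?}$ controlled by a bounded power, so $n_{A^{100}}(h^{-1}gh)$ is comparable to $n_A(g)$ up to a factor depending only on $K$.

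For items (2) and (3) I would set up the combinatorial analogue of the convolution bump function. Define, for a word-counting function analogous to $\psi_1$,
$$\psi_1(x):=\Bigl(1-\tfrac{1}{n_A(Q)}\inf\{n\geq 0:\ x\in Q^nA\}\Bigr)^+,\qquad
\psi_2(x):=\Bigl(1-\tfrac{1}{L}\inf\{n\geq 0:\ x\in S^nA\}\Bigr)^+,$$
where $Q=\{g,h\}$, $L=10^6K^3$, and $(\cdot)^+$ denotes positive part, and then $\phi:=\psi_1*\psi_2/(\psi_1*\psi_2)(1)$ with convolution taken against counting measure on $\langle A\rangle$. The same elementary identities hold: the discrete Taylor formula (\ref{taylor}), the triangle inequality $\|\partial_{gh}\phi\|_\infty\leq\|\partial_g\phi\|_\infty+\|\partial_h\phi\|_\infty$, and the bound that $\|\partial_g\phi\|_\infty<1$ forces $g\in\Supp(\phi)\subset A^{C}$ for a bounded $C$. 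One checks $\|\partial_g\psi_1\|_\infty\leq 1/n_A(Q)$ for $g\in Q$ and $\|\partial_h\psi_2\|_\infty\leq 1/L$ for $h\in S^{A^4}$, and that $\|\psi_1*\psi_2\|_\infty$ and $\psi_1*\psi_2(1)$ are within a factor $O(K^{O(1)})$ of $|A|$ by Ruzsa covering (Lemma \ref{ruzsa}), giving $\|\phi\|_\infty\leq C(K)$. The mixed-derivative formula $\partial_h\partial_g(\psi_1*\psi_2)(x)=\sum_y(\partial_g\psi_1)(y)(\partial_{h^y}\psi_2)(y^{-1}x)$ then yields $\|\partial_h\partial_g\phi\|_\infty\leq C(K)\,\tfrac{1}{n_A(Q)}\cdot\tfrac{1}{L}$ provided $h^y\in S$ for all relevant $y$, which is guaranteed by the conjugation-invariance $S^{A^4}$ in axiom (2). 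Feeding this into (\ref{taylor}) with $\|\partial_{g^n}\phi\|_\infty\leq 2\|\phi\|_\infty$, and choosing $L=10^6K^3$ large enough to kill the error term as in the topological computation, produces the escape-norm estimate $\|\partial_g\phi\|_\infty\leq C(K)\|g\|_A+\tfrac{1}{2}\|h\|_A$-type inequalities, from which items (2) and (3) follow exactly as Lemma \ref{GY1} and Item (3) of Lemma \ref{GY2} were deduced above; item (3) additionally uses that a commutator word can be arranged so that the relevant first-derivative supports multiply the two small quantities $\|g\|_A$ and $\|h\|_A$.

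The main obstacle, and the place where genuine care is required rather than mechanical translation, is bookkeeping the powers of $A$: in the topological setting one freely enlarges $V$ to $V^4$ or shrinks to $U$ without cost, but here every enlargement of the ambient set multiplies cardinalities by a factor that must remain bounded by a function of $K$ alone. This is exactly why the definition of strong approximate group is rigged with the specific exponents $100$, $1000$, $10^6K^3$: one must verify that all the set-inclusions used (images of power-words under conjugation, supports of the $\partial\psi_i$, the support of $\phi$, the domain on which the Taylor formula is applied) stay inside $A^{O(1)}$ with absolute exponents, and that the covering-number estimates invoked via Lemma \ref{ruzsa} only ever cost $O(K^{O(1)})$. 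Once one is disciplined about carrying these exponents through, the analysis is a faithful discrete shadow of the Gleason--Yamabe argument; I would refer to \cite{bgt} or \cite{tao-hilbert} for the exhaustive verification of the constants.
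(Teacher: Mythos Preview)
Your proposal is correct and matches the paper's approach exactly: the paper itself omits the proof, saying only that it ``is essentially identical to the original Gleason--Yamabe lemma that we proved above,'' and your sketch is precisely a faithful outline of that translation---replacing Haar measure by counting measure, the neighborhoods $V,U,W$ by the sets $A,A^{100},S$, and invoking the two strong-approximate-group axioms in exactly the roles you identify (comparability of escape norms in place of the NSS compactness argument, and the conjugation-stable set $S$ in place of $U=\bigcap_{y\in V^3}y^{-1}Wy$). Your remark that the only genuine work is tracking the exponents so that everything stays inside $A^{O(1)}$ with covering costs $O(K^{O(1)})$ is also on the mark; for the bookkeeping the paper likewise defers to \cite{bgt} and \cite{tao-hilbert}.
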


We do not repeat the proof, which is essentially identical to the original Gleason-Yamabe lemma that we proved above. The key point here is the second item. For example it implies that if the $\langle g \rangle$ and $\langle h \rangle$ lie both in $A$, then the subgroup generated by $gh$ is also contained in $A$.

Of course in order to be able to apply this lemma, we need to be able to produce a strong approximate subgroup out of the original approximate subgroup $A$. Indeed we prove:

\begin{lemma}[existence of strong approximate subgroups] Given $K>0$, there is $C(K)>0$ such that for every $K$-approximate subgroup $A$ of a group $G$ there is a strong $C(K)$-approximate subgroup $A' \subset A^4$, such that $A$ is contained in a most $C(K)$ left translates of $A'$.
\end{lemma}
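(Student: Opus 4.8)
The plan is to take for $A'$ a sufficiently deep term of a tower of iterated ``roots'' of $A$, produced by the Sanders--Croot--Sisask lemma in its nested, conjugation-equivariant form. The guiding principle is that the three axioms of a strong approximate group are exactly assertions of the absence of small-subgroup pathology up to a controlled scale: axiom (1) is the comparability $n_{(A')^{100}}(g)\le 10^{3}\,n_{A'}(g)$, the approximate-group incarnation of $(\ref{comp})$, and axiom (2) is the existence of a Gleason-type neighbourhood $S\subset A'$, the analogue of the neighbourhood $U\subset V$ of Lemma \ref{GY1}; passing to roots is precisely the operation that removes such pathology, in the same way that rooting underlies the NSS reduction behind Corollary \ref{trapping}.

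In detail I would proceed as follows. First, apply the strengthened Sanders--Croot--Sisask lemma to $A$ to obtain a nested family of subsets of $A^{4}$,
$$A^{4}\ \supseteq\ S_{1}\ \supseteq\ S_{2}\ \supseteq\ \cdots,\qquad S_{k+1}^{2}\subseteq S_{k},\quad gS_{k+1}g^{-1}\subseteq S_{k}\ \ (g\in A),$$
with $|S_{k}|\ge |A|/C(K,k)$ and --- this is the crucial extra input --- with each $S_{k}$ itself a $C(K)$-approximate group whose complexity depends on $K$ alone, not on the depth. The nesting gives $S_{k}^{2^{\,k-j}}\subseteq S_{j}$, and since $(A')^{4}$ will lie in $A^{4}$ (a product of four copies of $A$), conjugation by $(A')^{4}$ carries $S_{k+4}$ into $S_{k}$. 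Set $A':=S_{j}^{2}$ for an index $j\ge 2$; then $A'\subset S_{1}\subset A^{4}$, $A'$ is a $C(K)$-approximate group, and $A\subseteq XA'$ with $|X|\le C(K)$ by the Ruzsa covering Lemma \ref{ruzsa} applied with $B:=S_{j}$ (so $BB^{-1}=A'$ and $|A\cdot S_{j}|\le|A^{5}|\le C(K)|S_{j}|$). Now let $K':=C(K)$ be this uniform complexity bound, put $m:=10^{6}(K')^{3}$, and only at this point fix the depth of the tower $L=L(K)$ large in terms of $m$. Choose a middle index $j$ (with $c(K)\le j\le L-c(K)$), and take $A':=S_{j}^{2}$ together with the witness set $S:=S_{j+c}^{2}$, $c=c(K)=O(\log m)$. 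The nesting collapses everything that appears in the axioms into coarser levels of the tower: $(A')^{100}=S_{j}^{200}\subseteq S_{j-O(1)}$, while $S\subseteq A'$ and $(S^{(A')^{4}})^{m}\subseteq S_{j+1}\subseteq A'$. Axiom (2) thus reduces to the assertion that the root $S_{j+c}$ contains every element whose first $m$ powers lie in $S_{j}^{2}$, and axiom (1) reduces to the assertion that $S_{j}^{2}$ contains every element whose first $1000$ powers lie in $S_{j-O(1)}$.

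The step I expect to be the real obstacle is the last one: showing that a Croot--Sisask root at one level of the tower captures the slowly-escaping elements of the coarser level a bounded number of steps above it. This is where the root structure is genuinely used --- one runs down the finitely many intervening levels, improving the escape estimate at each one by means of the defining (in)equalities of the roots; this is nothing but the approximate-group transcription of the escape-norm arguments behind Lemmas \ref{GY1}--\ref{GY2}, and is carried out in detail in \cite{bgt} and \cite{tao-hilbert}. Everything else is scaffolding: iterated application of the Sanders--Croot--Sisask lemma; control of the approximate-subgroup constant using the small tripling / Pl\"unnecke--Ruzsa inequalities together with the fact that the roots have complexity bounded by $K$ alone (only the density $|S_{k}|/|A|$ degrades with the depth), which is what keeps $K'$ a function of $K$ and thereby breaks the apparent circularity (the depth and root parameter are chosen in terms of $m=10^{6}(K')^{3}$, but $K'$ does not depend on that choice); and the Ruzsa covering lemma for the inclusion $A\subseteq XA'$.
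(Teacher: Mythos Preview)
Your approach is genuinely different from the paper's, and the difference matters. The paper does \emph{not} prove this lemma combinatorially: it passes to the ultraproduct, applies Hrushovski's local model theorem (Theorem~\ref{hrushov}) to obtain a locally compact model group $G$, applies the full Gleason--Yamabe structure theorem to $G$ so that (after passing to an open subgroup) $G$ is a projective limit of Lie groups, observes that small balls in exponential coordinates on a Lie group trivially satisfy the strong approximate group axioms, and then pulls these balls back to the $A_n$'s. The paper says explicitly, just before the proof, that the authors ``were not able to give a purely combinatorial proof of this lemma'' and that ``there lies the source of ineffectiveness of our main theorem.'' Your proposal is exactly such a purely combinatorial proof; if it worked it would make Theorem~\ref{BGT} effective.

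The gap is precisely at the step you flag as ``the real obstacle.'' The nesting $S_{k+1}^{2}\subset S_{k}$ controls \emph{products} of elements from a deep level in terms of a shallower level; the strong approximate group axioms go the other way, asking that an element whose \emph{powers} lie in a shallow level must itself lie in a deep level. There is no mechanism in the Sanders--Croot--Sisask tower that forces this, and your appeal to ``the approximate-group transcription of the escape-norm arguments behind Lemmas~\ref{GY1}--\ref{GY2}\ldots carried out in detail in \cite{bgt} and \cite{tao-hilbert}'' is circular: in those references the approximate-group Gleason--Yamabe lemmas are proved \emph{assuming} $A$ is already a strong approximate group (that is how they are stated in this very paper, just above the lemma in question). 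In the locally compact setting the corresponding step---trapping slowly escaping elements, i.e.\ Corollary~\ref{trapping} and the NSS reduction---uses the Peter--Weyl theorem in an essential way; it is exactly this ingredient that has no known finitary analogue, and it is what the paper imports wholesale by going through the model group. A secondary issue is your ``crucial extra input'' that the $S_k$ are $C(K)$-approximate groups with $C(K)$ independent of the depth: from $S_k\subset A^4$ and $|S_k|\ge|A|/C(K,k)$ one only gets $|S_k^3|/|S_k|\le K^{O(1)}C(K,k)$, which does depend on $k$, so this too would need an argument you have not supplied.
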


This allows to work with $A'$ instead of $A$, and thus apply the Gleason-Yamabe lemmas to it. Unfortunately, we were not able to give a purely combinatorial proof of this lemma. It is a consequence of Hrushovski's local model theorem (Theorem \ref{hrushov}) and of the original Gleason-Yamabe structure theorem for locally compact groups (applied to the model group $G$ from Theorem \ref{hrushov}). Consequently $C(K)$ is ineffective, and there lies the source of ineffectiveness of our main theorem.

\begin{proof} Apply the Gleason-Yamabe structure theorem for locally compact groups to the model group $G$ from Theorem \ref{hrushov}. Up to passing to an open subgroup, we may assume (up to changing $A$ into a commensurate approximate subgroup) that $G$ is a projective limit of Lie groups. But it is easy to see that sufficiently small balls around the identity in exponential coordinates in a connected Lie group (say for a choice of a Euclidean metric on the Lie algebra) satisfy the strong approximate group axioms. Hence we can pull back those sets to $\A^4$ via Theorem \ref{hrushov}, and we are done.
\end{proof}

Note that this argument actually shows that $\{g \in A ; \|g\|_A < \eps\}$ is a positive proportion of $A$, for every $\eps>0$. So up to passing to that subset for $\eps=\frac{1}{C}$, we may assume that $\|g\|_A <\frac{1}{C}$ for all $g \in A$. It is then easy to exploit the approximate group version of the Gleason-Yamabe lemmas and conclude Step 2. Indeed set $H:=\{g \in A; ||g||=0\}$. By $(1)$ and $(2)$, this forms a subgroup of $G$, which is normalized by $A$. Moreover set pick $a \in A$ with $\|a\|_A$ minimal. Then $(3)$ implies that $a$ commutes with every element of $A$ modulo $H$. This concludes Step 2.

For Step 3 and the induction on $\dim L$, we refer the reader to our original paper \cite{bgt}, or to Tao's book \cite{tao-hilbert}.

\section{Applications to diameter bounds and scaling limits of finite groups}

\subsection{Diameter of finite simple groups}

A celebrated conjecture of Babai \cite{babai-seress} proposes the following universal polylogarithmic upper bound on the diameter of all Cayley graphs of finite simple groups:

$$\diam_S(G) \leq C \log(|G|)^D$$
for every finite simple group $G$ with symmetric generating set $S$, where $\diam_S(G):=\inf\{n \in \N ; S^n =G\}$, and absolute constants $C,D>0$.

All attempts to this conjecture so far are based on the classification of finite simple groups, by examining case by case various infinite families, such as the alternating groups (for which the conjecture is still open, see \cite{h-s} for the best available bounds), or finite simple group of Lie type in the $q$ large limit (such as $\PSL_n(q)$ for $n$ fixed). Actually in the latter case, when the rank is bounded, the conjecture has been solved by Helfgott \cite{helfgott-sl2} for $\PSL_2(p)$ and by Pyber-Szabo \cite{pyber-szabo}, and independently Green, Tao and the author \cite{bgt1} in general. In fact it is conjectured that $D=1$ for bounded rank groups (\cite{breuillard-icm}). It has also been extended to some non simple perfect groups such as $\PSL_n(\Z/q\Z)$ with $q$ arbitrary by Varj\'u \cite{varju} (see also \cite{bradford}).

On the other hand, one can very easily derive from our structure theorem for approximate groups, the following weaker, but very general bound.

\begin{theorem}[Diameter bound for finite simple groups \cite{breuillard-tointon}] Given $\eps>0$, there are only finitely many non-abelian finite simple groups admitting a symmetric generating set $S$ with diameter $$\diam_S(G) \geq |G|^\eps.$$
\end{theorem}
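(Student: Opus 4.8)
The plan is to argue by contradiction. Fix $\eps>0$ and suppose there are infinitely many pairs $(G,S)$ with $G$ a non-abelian finite simple group, $S$ a symmetric generating set, and $\diam_S(G)\geq|G|^\eps$; along any such sequence $|G|\to\infty$. Choose a constant $K=K(\eps)$ so large that $\log 5/\log K<\eps/2$, let $C(K),c(K)$ be the constants provided by the structure theorem in its strong form (Theorem \ref{strong} together with the refinement of Remark d) after Theorem \ref{BGT}, which controls the finite normal part), and set $M_0:=\lceil K^3C(K)\rceil$. I will show that every such pair with $|G|$ exceeding a threshold depending only on $\eps$ yields a contradiction, which proves the theorem.

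\emph{Finding a good scale.} Consider the radii $M_0,5M_0,5^2M_0,\dots$. If one had $|S^{5m}|>K|S^m|$ for every $m=5^iM_0$ with $i$ up to $\lfloor\log|G|/\log K\rfloor$, then $|S^{5^{i+1}M_0}|$ would exceed $|G|$, which is absurd; hence there is $i_0\leq\log|G|/\log K$ with $|S^{5m}|\leq K|S^m|$ for $m:=5^{i_0}M_0$. Thus $m\geq M_0$ and $m\leq M_0|G|^{\log 5/\log K}<M_0|G|^{\eps/2}$. Applying Lemma \ref{approx5} to $S^m$, the set $B:=S^{2m}$ is a $K$-approximate subgroup, so $|B^2|=|S^{4m}|\leq K|B|$ and $|B^4|=|S^{8m}|\leq K^3|B|$. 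Since $2m<|G|^\eps\leq\diam_S(G)$ once $|G|$ is large, $B$ is a \emph{proper} subset of $G$ (while $S^{2m}$ is still symmetric and generates $G$).

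\emph{Applying the structure theorem.} By the structure theorem there are $X_0\subset G$ with $|X_0|\leq C(K)$ and a subgroup $\Gamma_0\leq G$ with $B\subset X_0\Gamma_0$, where $\Gamma_0$ is finite-by-nilpotent: $1\to N\to\Gamma_0\to L\to 1$ with $L$ nilpotent and $N$ finite satisfying $N\subset X_1B$ for some $X_1$ with $|X_1|\leq c(K)$. Choosing the coset of $\Gamma_0$ that meets $B$ in the most elements and translating it by a point of $B$ it contains gives $|B^2\cap\Gamma_0|\geq|B|/C(K)$. Now split into two cases. If $\Gamma_0\neq G$, then combining $|S^{4m}\cap\Gamma_0|=|B^2\cap\Gamma_0|\geq|B|/C(K)\geq|S^{8m}|/(K^3C(K))$ with $4m\geq4M_0\geq2K^3C(K)$ lets us invoke Lemma \ref{schreier} (with $k=4m$ and $C=2K^3C(K)$) to get $[G:\Gamma_0]\leq2K^3C(K)$; as $G$ is simple and $\Gamma_0$ is proper, the normal core of $\Gamma_0$ is trivial, so $G$ embeds in the symmetric group on its at most $2K^3C(K)$ cosets and $|G|\leq(2K^3C(K))!$, a bound depending only on $\eps$. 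If instead $\Gamma_0=G$, then $N\lhd G$ forces $N=\{1\}$ or $N=G$: when $N=\{1\}$ we get $G\cong L$ nilpotent, impossible for a non-abelian simple group; when $N=G$ we get $G=N\subset X_1S^{2m}$, so $|S^{2m}|\geq|G|/c(K)$, and Remark \ref{diambound} applied to the symmetric generating set $S^{2m}$ gives $(S^{2m})^{\ell}=G$ for some $\ell=O(\log c(K))$, whence $\diam_S(G)\leq2m\ell\leq 2M_0|G|^{\eps/2}\cdot O(\log c(K))$, contradicting $\diam_S(G)\geq|G|^\eps$ for $|G|$ large. In all cases $|G|$ is bounded in terms of $\eps$ only, contradicting $|G|\to\infty$.

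The step I expect to be the main obstacle is this final dichotomy, because the bare Theorem \ref{BGT} carries no information when $G$ is finite (one may take $\Gamma_0=G$): it is essential to use the refined output in which $\Gamma_0$ is finite-by-nilpotent with the finite part $N$ covered by boundedly many translates of the approximate group, which is what excludes the degenerate case $\Gamma_0=G$. This is also the reason the scale $m$ must be chosen \emph{polynomially} small in $|G|$, with exponent strictly below $\eps$: only then does a covering of $G$ by boundedly many translates of $S^{2m}$ contradict the diameter lower bound through Remark \ref{diambound}. Reconciling this upper bound on $m$ with the lower bound $m\geq M_0$ needed for Lemma \ref{schreier} is the one point where the choice of $K=K(\eps)$ has to be tuned carefully.
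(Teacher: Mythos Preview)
Your proof is correct and follows essentially the same route as the paper: find a scale where doubling is controlled, apply the structure theorem, use Lemma~\ref{schreier} and simplicity to force the structural subgroup to be all of $G$, then exploit the finite normal part to bound $|G|$ by a constant times $|S^{2m}|$ and derive a diameter contradiction via Remark~\ref{diambound}. The only cosmetic differences are that the paper invokes the coset-nilprogression form (Theorem~\ref{strong}) and works with $\langle P\rangle$ and its finite kernel $H$, whereas you invoke the equivalent finite-by-nilpotent form (Theorem~\ref{BGTbis} together with Remark~d) and work with $\Gamma_0$ and $N$; and the paper takes the scale $n\leq\sqrt{\diam_S(G)}$ rather than your $m\lesssim|G|^{\eps/2}$, which leads to the same contradiction.
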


\begin{proof}We argue as in the proof of Theorem \ref{gromov}, but instead of Theorem \ref{BGT}, we use the strong form of our structure theorem, i.e. Theorem \ref{strong}. By contradiction, suppose there were infinitely many such groups. Then there would be arbitrarily large integers $n$ and simple groups $G$ with generating sets $S$, such that $|S^{5n}| \leq K|S^n|$, where $K=5^{4/\eps}$, and $n \leq \sqrt{\diam_S(G)}$. For otherwise there would be $n_0 \ge 1$ independent of $G,S$ such that $|G| \geq |S^{5^m n_0}| \geq K^m |S^{n_0}| \geq K^m$ for all $m$ with $5^m n_0 \leq \sqrt{\diam_S(G)}$, but this is not compatible with $\diam_S(G) \geq |G|^\eps$, when $|G|$ is large. By Lemma \ref{approx5}, $S^{2n}$ is a $K^2$-approximate subgroup. We may then apply Theorem \ref{strong} and conclude that $S^{n} \subset XP$, where $|X| \leq C(K)$ and $P$ is a coset nilprogression of size $|P| \leq C(K) |S^{2n}|$. In particular $|S^{2n} \cap \langle P \rangle| \geq |S^{5n}|/KC(K)^2$. By Lemma \ref{schreier}, if $n$ is large enough, $\langle P \rangle$ has index at most $KC(K)^2$ in $G$. But $G$ is a non-abelian simple group, so it has no non-trivial subgroup of bounded index (for every subgroup of index $d$ contains a normal subgroup of index at most $d!$). Hence $G=\langle P \rangle$. However, by definition, $P$ contains a subgroup $H$ normalized by $P$, such that $\langle P \rangle /H$ is nilpotent. Since $G$ is simple, we must have $G=H$, and hence $|G| \leq |XP| \leq C(K)^2 |S^{2n}|$, and hence applying Remark \ref{diambound} $ \diam_S(G) \leq O_K(n)$, which is eventually a contradiction, for $n \leq \sqrt{\diam_S(G)}$.
\end{proof}

The above argument can be strengthened to arbitrary finite groups. For example, in \cite{breuillard-tointon} we prove:

\begin{theorem}[\cite{breuillard-tointon}] Assume that $G$ is a finite group with symmetric generating set $S$ such that $\diam_S(G) \geq |G|^\eps$. Then $G$ has a quotient $Q$ with diameter at least $c_\eps \diam_S(G)$, which contains a cyclic subgroup of index at most $C_\eps$. Here $c_\eps,C_\eps$ are positive constants depending on $\eps$ only.
\end{theorem}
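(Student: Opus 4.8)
The plan is to mimic the proof of the preceding theorem to reduce to a virtually nilpotent group of bounded complexity, and then to strip off ``dimensions'' one at a time, through a bounded number of quotients, until only a virtually cyclic one is left, losing merely a constant factor of the diameter at each step. Write $d:=\diam_S(G)$; the finitely many $G$ with $|G|$ (equivalently $d$) bounded in terms of $\eps$ are absorbed into $C_\eps$ by taking $Q=G$, so I would assume $d$ large. Exactly as above, with $K:=5^{4/\eps}$ the polynomial-growth pigeonhole argument produces a scale $n$ with $n_0(\eps)\le n\le\sqrt d$ and $|S^{5n}|\le K|S^n|$; by Lemma~\ref{approx5} $S^{2n}$ is a $K^2$-approximate group, and Theorem~\ref{strong} gives $S^n\subseteq XP$ with $|X|\le C(K)$ and $P$ a coset nilprogression of rank $\le r$ and step $\le s$ ($r,s$ bounded in terms of $\eps$), $P\subseteq S^{O(n)}$, $|P|\le C(K)|S^n|$. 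Ruzsa covering (Lemma~\ref{ruzsa}) yields $|S^{2n}\cap\langle P\rangle|\ge|S^n|/C(K)\ge|S^{4n}|/C$, so Lemma~\ref{schreier} gives $[G:\langle P\rangle]\le C_1(\eps)$; the normal core $N\lhd G$ has index $\le C_2(\eps):=C_1!$, is finite-by-(nilpotent of step $\le s$ and rank $\le r':=r+C_2$), and the finite kernel $\kappa$ of the coset nilprogression satisfies $\kappa\subseteq P\subseteq S^{O(n)}$.

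Next I would kill the finite part cheaply. Since $\kappa$ is normalized by $\langle P\rangle\supseteq N$, it has at most $C_2$ conjugates in $G$, each normal in $N$ and each contained in $S^{O_\eps(n)}$; hence their product $\kappa^G$, the normal closure of $\kappa$ in $G$, is a finite normal subgroup of $G$ contained in $S^{O_\eps(n)}$. Using the elementary fact that $\diam_{\overline S}(G/M)\ge\diam_S(G)-\rho$ whenever $M\lhd G$ and $M\subseteq S^\rho$, together with $n\le\sqrt d=o(d)$, passing to $\overline G:=G/\kappa^G$ costs only $o(d)$, so $\diam_{\overline S}(\overline G)\ge(1-o(1))d$, while $\overline G$ now has a genuinely nilpotent normal subgroup of step $\le s$, rank $\le r'$ and index $\le C_2$. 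It therefore suffices to prove: a finite group $H=\langle S\rangle$ with a normal nilpotent subgroup of step $\le s$, rank $\le r'$ and index $\le C_2$ has a quotient $Q$ with a cyclic subgroup of index $\le C_2$ and $\diam(Q)\ge c(r',s,C_2)\,\diam_S(H)$.

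I would prove this by induction on $r'$. If $r'\le1$, the nilpotent part has cyclic abelianization, hence is cyclic, so $H$ is already virtually cyclic and $Q=H$ works. If $r'\ge2$, the goal is a normal subgroup $M\lhd H$ with $H/M$ again of the same type but of rank $\le r'-1$ and $\diam_{\overline S}(H/M)\ge(1-\eta)\diam_S(H)$ for some $\eta=\eta(r',s,C_2)<1$: reapply the reduction above to present $H$ (up to bounded index and, as in the previous two steps, a thin finite part) by a coset nilprogression $P(x_1,\dots,x_{r'};L_1,\dots,L_{r'})$ with $L_1\ge\dots\ge L_{r'}$, change basis in the abelianization so that the $x_i$ are independent, and take $M$ to be the normal closure of the shortest direction $x_{r'}$, together with the normal closures of the bounded set of coset representatives and of the thin finite part. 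The ball--box principle for nilprogressions (Proposition~\ref{nilapprox} and the growth estimates preceding Theorem~\ref{bt-thm}) and the uniform polynomial growth of bounded-complexity nilpotent groups then give $\diam_S(M)\le\eta\,\diam_S(H)$, since the $x_{r'}$-direction and all iterated commutators involving it account for at most a bounded fraction of the total word-length (the commutator layers being filled on strictly sublinear scales). Iterating at most $r'$ times produces $Q$ with a cyclic subgroup of index $\le C_2=:C_\eps$ and $\diam(Q)\ge(1-\eta)^{r'}(1-o(1))d\ge c_\eps d$.

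Everything up to the induction is routine bookkeeping around the strong structure theorem. The hard part will be the diameter estimate in the inductive step: showing that the normal closure of a single ``dimension'' of a bounded-complexity virtually nilpotent group is metrically smaller than the whole group by a definite factor, and doing so while keeping $M$ normal in $H$ (rather than merely in the finite-index nilpotent subgroup) and the quotient of strictly smaller rank and controlled complexity. Making this quantitative forces one to carry along the explicit side-length data $L_1\ge\dots\ge L_{r'}$ coming from Theorem~\ref{strong}, to track how quotient maps interact with the ball--box estimates, and to do a little linear algebra over the abelianization (with the module structure given by the bounded finite-index part) to handle the change-of-basis and independence issues; this is where the bulk of the argument of \cite{breuillard-tointon} is spent.
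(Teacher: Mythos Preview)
The paper does not prove this theorem; it is merely stated with a citation to \cite{breuillard-tointon}, so there is no in-paper argument to compare your outline against.

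Your reduction---pigeonhole for a scale of small doubling, Theorem~\ref{strong}, Lemma~\ref{schreier}, passage to the normal core, and quotient by $\kappa^G$---is correct and is exactly the expected extension of the preceding simple-groups proof. You rightly isolate the new content as: a finite group with a normal nilpotent subgroup of bounded rank, step and index admits a virtually cyclic quotient of comparable diameter.

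Two points in your inductive sketch need repair. First, ``reapply the reduction above to present $H$'' is dangerous if taken literally: re-running Theorem~\ref{strong} on each successive quotient yields a rank bounded only in terms of $\eps$, not a rank that has dropped by one, so the induction need not terminate. You must instead fix the nilpotent structure obtained from the first application and verify directly that killing the normal closure of one generator lowers the rank of the nilpotent part. Second, ``together with the normal closures of the bounded set of coset representatives'' cannot mean what it says---the normal closure in $H$ of a single coset representative may well be all of $H$. The mechanism you actually want is that the $\le C_2$ conjugates $g x_{r'} g^{-1}$, with $g$ running over coset representatives chosen in $S^{C_2}$, are all short; hence the $H$-normal closure of $x_{r'}$ coincides with the $N$-normal closure of a bounded set of short elements, and the ball--box estimates then confine it to $S^{\eta\,\diam_S(H)}$. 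With these fixes your strategy is the natural one, and, as you yourself acknowledge, the genuine work is the quantitative ball--box control in the inductive step carried out in \cite{breuillard-tointon}.
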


On a Cayley graph, the diameter and the first eigenvalue of the Laplacian enjoy the following general inequality (see \cite{diaconis-saloff} of \cite[Lemma 5.1]{breuillard-tointon}:

$$\lambda_1 \geq \frac{1}{8\diam_S(G)^2}$$

As a consequence of this and the above theorem, we see that every Cayley graph of a finite group $G$ satisfies a spectral gap lower bound of the form

$$\lambda_1 \geq \frac{1}{|G|^\eps}$$
unless it admits a large virtually cyclic quotient with diameter comparable to that of $G$. Such bounds are weaker than what is required for expander graphs, but they can be useful (they can be used for example in the work of Ellenberg-Hall-Kowalski \cite{ehk}). For additional results in this vein, we refer the reader to \cite{breuillard-tointon}.

\subsection{Scaling limits of vertex transitive graphs}

In \cite{bft} Benjamini, Finucane and Tessera considered the problem of determining the scaling limits of finite vertex transitive graphs satisfying the volume bound $(\ref{cond})$ below. It turns that this problem can be analysed thanks to our structure theorem for approximate subgroups. Given constants $C,d>0$, consider the family $\mathcal{F}_{C,d}$ of all finite, vertex transitive connected graphs $X$ such that
\begin{equation}\label{cond}
\frac{|X|}{\deg(X)} \leq C \cdot \diam(X) ^d
\end{equation}
where $|X|$ is the number of vertices of $X$ and $\deg(X)$ is the number of neighbors of a given vertex in $X$ (all vertices have the same number of neighbors, since $X$ is assumed vertex transitive). We denote by $\widehat{X}$ the finite metric space defined on the set $X$ by the graph distance renormalized by a factor $\frac{1}{\diam(X)}$ so that $\widehat{X}$ has diameter $1$.
They prove:

\begin{theorem}[\cite{bft}]\label{toruslimit} For every sequence $\{X_n\}_{n \geq 1}$ of finite graphs belonging to $\mathcal{F}_{C,d}$ for some fixed $C,d$, and such that $\diam(X_n) \to +\infty$, there is a subsequence $\{X_{n_k}\}$ such that $\widehat{X_{n_k}}$ converges in the Gromov-Hausdorff topology towards a torus $\R^q/\Z^q$, endowed with a left-invariant Finsler metric.
\end{theorem}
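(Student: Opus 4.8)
The plan is to combine the strong structure theorem (Theorem~\ref{strong}) with the ``doubling at one scale $\Rightarrow$ doubling at all scales'' principle (Theorem~\ref{bt-thm}), reduce the problem to understanding rescaled Cayley graphs of finite \emph{virtually nilpotent} groups of bounded complexity, and then show that at the macroscopic scale such a Cayley graph collapses onto a torus because all of its finite, ``virtual'' and higher-commutator parts have diameter negligible compared with that of the whole graph.

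\emph{Step 1 (reduction and location of a doubling scale).} Keep the graph metric but divide all distances by $R_n:=\diam(X_n)\to+\infty$, so that we study $\widehat{X_n}$. First, by the reduction carried out in \cite{bft} --- pass to a transitive group of automorphisms and then, at the cost of a modification negligible after rescaling, to a Cayley graph --- we may assume $X_n=\Cay(G_n,S_n)$ for a finite group $G_n$ with symmetric generating set $S_n$, with $\diam_{S_n}(G_n)\asymp R_n$ and with the volume hypothesis inherited as $|G_n|\le C'\,|S_n|\,R_n^{d}$. Exactly as in the derivation of Gromov's theorem~\ref{gromov} from Theorem~\ref{BGT}, a pigeonhole over the scales $1,5,5^2,\dots$ yields, for each $n$, a scale $\rho_n\to+\infty$ with $|S_n^{5\rho_n}|\le K|S_n^{\rho_n}|$ for a fixed $K=K(C',d)$; by Lemma~\ref{approx5}, $S_n^{2\rho_n}$ is a $K$-approximate subgroup. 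Theorem~\ref{bt-thm} then gives uniform doubling at all scales between $2\rho_n$ and $R_n$, so $|S_n^{R_n/10}|\asymp|G_n|$; applying Theorem~\ref{strong} to $S_n^{R_n/10}$ produces a coset nilprogression $P_n$ of rank $\le r$ and step $\le s$ (with $r,s$ bounded in terms of $K$) with $S_n^{R_n/10}\subset X_n'P_n$, $|X_n'|\le C(K)$, $|P_n|\asymp|G_n|$, and by Lemma~\ref{schreier} (as in the proof of Theorem~\ref{gromov}) $\langle P_n\rangle$ has index $\le C(K)$ in $G_n$. Thus $G_n$ is finite, and virtually nilpotent of complexity bounded in terms of $K$: it is (virtually) finite-by-nilpotent with the nilpotent quotient of rank $\le r$ and step $\le s$.

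\emph{Step 2 (collapse onto the abelianization --- the heart of the matter).} The key geometric input, which follows from the ball-box principle for (coset) nilprogressions (cf. \cite{breuillard-tointon}) together with routine nilpotent algebra, is that whenever a finite group $N$ carries a word metric with $\diam(N)\to+\infty$ and admits a coset nilprogression of bounded rank and step filling it, the finite kernel of the defining homomorphism, each term $C^{j}(N)$ of the lower central series with $j\ge2$, and the subgroup generated by the bounded-order generators all have diameter $o(\diam(N))$ inside $N$ --- reaching ``depth'' $m$ in the $j$-th layer of the nilpotent structure costs only $\asymp m^{1/j}$ steps, via iterated commutators, while every relevant depth is at most $\diam(N)^{O(1)}$. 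Given a normal subgroup $N'\lhd G_n$ with $\diam_{S_n}(N')=o(R_n)$, the quotient map $\phi\colon G_n\to G_n/N'$ is a $(1,\diam_{S_n}(N'))$-quasi-isometry onto $\Cay(G_n/N',\phi(S_n))$: indeed $d_{\phi(S_n)}(\phi g,\phi h)\le d_{S_n}(g,h)$ always, and lifting a geodesic in the quotient and correcting by an element of $N'$ gives $d_{S_n}(g,h)\le d_{\phi(S_n)}(\phi g,\phi h)+\diam_{S_n}(N')$. After dividing by $R_n$ the two metrics differ by $o(1)$, so $\widehat{X_n}$ and $\widehat{\Cay(G_n/N',\phi(S_n))}$ are at Gromov--Hausdorff distance $o(1)$. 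Iterating --- quotient out the virtual/finite part, then the lower central series from the bottom up, then the bounded-order generators --- we arrive at a finite \emph{abelian} group $A_n$ generated by $q_n\le r$ elements, with $\diam(A_n)\asymp R_n$, such that $\widehat{X_n}$ is Gromov--Hausdorff close to $\widehat{\Cay(A_n,T_n)}$ where $T_n$ is the (symmetric) image of $S_n$ in $A_n$.

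\emph{Step 3 (passage to the limit).} Write $A_n=\Z^{q_n}/\Lambda_n$ with $\Lambda_n$ of finite index, so that, after a suitable $\GL_{q_n}(\Z)$-normalisation ensuring the generators are not too spread out, the word metric of $\Cay(A_n,T_n)$ is within $O(1)$ of the quotient by $\Lambda_n$ of $(\R^{q_n},\nu_n)$, where $\nu_n$ is the norm with unit ball $\mathrm{conv}(T_n\cup(-T_n))$; hence $\widehat{\Cay(A_n,T_n)}$ is within $O(1/R_n)$ of $(\R^{q_n}/\tfrac{1}{R_n}\Lambda_n,\nu_n)$. Pass to a subsequence along which $q_n\equiv q$, $\nu_n\to\nu$ a norm on $\R^q$ (using that $q$ and the number of generators are bounded), and, by a compactness argument for closed subgroups (Chabauty, or Mahler's criterion), $\tfrac{1}{R_n}\Lambda_n\to\Lambda$, a closed subgroup of $\R^q$; since $A_n$ is finite and the diameter has been normalised to $\asymp1$, the quotient $\R^q/\Lambda$ is compact, hence a torus (of dimension $\le q$, possibly smaller if $\Lambda$ contains a linear subspace). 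Therefore $\widehat{X_{n_k}}\to(\R^q/\Lambda,\nu)$ in the Gromov--Hausdorff topology, and after a linear change of coordinates $(\R^q/\Lambda,\nu)$ is $\R^{q'}/\Z^{q'}$ equipped with a left-invariant Finsler metric, as claimed.

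\emph{Main obstacle.} The crux is Step~2: showing that the finite, virtual and higher-commutator parts have \emph{sublinear} diameter. This is precisely where the theorem's conclusion --- a torus rather than a sub-Finsler Carnot group, which is what Pansu-type heuristics for nilpotent groups would suggest --- is forced: the finiteness of $G_n$ ``saturates'' the higher layers of the nilpotent structure and collapses them at the macroscopic scale. Making this uniform over the family, for an arbitrary generating set, by pushing the ball-box description through coset nilprogressions of bounded complexity, is the technically delicate part. The reduction to Cayley graphs in Step~1 and the control of the integrality gap in Step~3 are also non-trivial but are more routine bookkeeping, for which we refer to \cite{bft}.
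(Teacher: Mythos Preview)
Your approach differs substantially from the paper's, and is closer in spirit to the original argument in \cite{bft}. After establishing relative compactness via Theorem~\ref{bt-thm} and Gromov's criterion (essentially your Step~1), the paper identifies the limit $X$ by working with the limit object rather than the approximating sequence: $X$ is connected, locally connected, finite-dimensional, and homogeneous under $G=\mathrm{Isom}(X)$; a Montgomery--Zippin lemma (a consequence of Peter--Weyl) forces $G$ to be a compact Lie group; equivariant Gromov--Hausdorff convergence then produces $\eps_n$-dense $\eps_n$-representations of the finite groups $\mathrm{Isom}(X_n)$ into $G$; and Turing's Ulam-stability theorem combined with Jordan's theorem on finite linear groups forces $G^0$ to be abelian. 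Since $G^0$ acts faithfully and transitively and is abelian, $X=G^0$ is a torus, and a short lemma identifies translation-invariant length metrics on $\R^k$ as norms. This route bypasses your Step~2 entirely: the collapse of the nilpotent structure is absorbed into the combination Jordan $+$ Turing applied to the limit group. Your route is more constructive and gives more explicit control over the limiting torus, but requires the delicate uniform diameter estimates you flag; the paper's is softer but imports several black boxes from classical Lie theory.

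There is, however, a genuine gap in Step~2 as written. You apply Theorem~\ref{strong} at scale $R_n/10$, so the finite kernel $H_n$ of the resulting coset nilprogression is only guaranteed to lie in $S_n^{O(R_n)}$, giving diameter $O(R_n)$, not $o(R_n)$; your justification ``reaching depth $m$ in the $j$-th layer costs $m^{1/j}$ steps'' concerns the nilpotent layers and says nothing about $H_n$. A given coset nilprogression can certainly have a kernel of linear diameter: in $G=\Z/p^2\Z$ with $S=\{\pm 1\}$, the coset nilprogression $\pi^{-1}([-p/2,p/2])$ for $\pi\colon\Z/p^2\Z\to\Z/p\Z$ has kernel $p\Z/p^2\Z$ of diameter $\asymp p^2\asymp\diam(G)$. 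The fix is to apply the structure theorem instead at a scale $\rho_n$ with $\rho_n\to\infty$ and $\rho_n/R_n\to 0$ (the pigeonhole permits this, as in the paper's existence proof), so that $H_n\subset S_n^{O(\rho_n)}$ has sublinear diameter, and then to use doubling at all scales to propagate this small-scale structure to the whole group. A similar issue arises with ``quotienting out the virtual part'': a subgroup of bounded index need not be normal, so one must first pass to its normal core.
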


In other words: scaling limits of vertex transitive graphs with (polynomially) large diameter are flat tori equipped with a norm.  We give a complete proof of Theorem \ref{toruslimit} in this section. In \cite{bft} the stronger condition $|X| \leq C \cdot \diam(X) ^d$ is assumed, but the proof works just as well assuming the weaker bound $(\ref{cond})$. When $\deg(X)$ is uniformly bounded it is possible to further show that $q \leq d$ and that the limit norm is polyhedral, see \cite{bft}, but we will not pursue this here.

\bigskip

A left-invariant Finsler metric on $\R^k/\Z^k$ is nothing but the choice of a norm $\|\cdot \|$ on $\R^k$, which induces a translation invariant distance on the torus $\R^k/\Z^k$  by setting $d([x],[y]) = \inf_{u \in \Z^k} \|x-y+u\|$.

\bigskip

 Here is a graphic way to illustrate the above theorem: the result implies that it is impossible to approximate the Euclidean sphere $S^2$ by finer and finer finite vertex transitive graphs. The soccer ball (or truncated icosahedron, which is vertex transitive) is perhaps (one of) the best attempts at this game, see \cite{gelander, benjamini-book}.

\subsubsection{Gromov compactness criterion}

The Gromov-Hausdorff metric is the natural distance on the space of compact metric spaces up to isometry extending the Hausdorff topology on compact subsets of a given metric space. We refer the reader to the books \cite{gromov-book, bbi} for background on this notion. For the moment we only recall the well-known:

\bigskip

\noindent{{\bf Gromov compactness criterion:}}\emph{ A family $\mathcal{F}$ of compact metric spaces is relatively compact in the GH-topology if and only if it is uniformly bounded in diameter, and for all $\eps>0$ there is $N(\eps) \in \N$ such that every $X \in \mathcal{F}$ can be covered by at most $N(\eps)$ balls of radius $\eps$.}

\bigskip

It easily follows from this criterion that the space of all compact metric spaces up to isometry is complete for the GH-metric. In fact this is the non-trivial part of the statement (it is easy to see that the above criterion is equivalent to the total boundedness of the family $\mathcal{F}$, and clearly a totally bounded subset of a complete metric space is relatively compact). Completeness is only proved for path metric spaces in Gromov's book \cite[Prop 5.2]{gromov-book}, but it is folklore that it holds in general, see e.g. \cite{rouyer}, or \cite[Prop. 1.8.7]{tao-hilbert} for a proof with ultraproducts.

\bigskip

We prove Theorem \ref{toruslimit} in two stages, first showing the existence of a limit and then identifying it.

\bigskip

First note that pulling back the graph metric on $X \in \mathcal{F}_{C,d}$ to the group of automorphisms of the graph, we get a natural word metric on this group. To this effect pick a base point $x_0 \in X$ and consider the subset $S \subset G:=Aut(X)$ of those $s \in S$ which either fix $x_0$ or send it to a neighbor. Since $X$ is connected and vertex transitive $G$ is generated by $S$ and $S=S^{-1}$. Moreover the preimage under the orbit map $G \to X$, $g \mapsto g\cdot x_0$ of the ball $B(x_0,n)$ of radius $n$ around $x_0$ is precisely $S^n$. In particular $\diam(X)=\diam_S(G)$ and $$\frac{|S^n|}{|S|} = \frac{|B(x_0,n)|}{\deg(X)}$$ for all $n \geq 1$. This allows to easily translate the statements from $X$ to $G$.

\begin{proof}[Proof of the existence of a limit in Theorem \ref{toruslimit}]  To apply the Gromov compactness criterion above to the family $\mathcal{F}=\{\widehat{X_n}\}_{n\geq 1}$, it is enough to check that given $\eps>0$, there is $K_\eps >0$ such that every ball of radius $r \geq \eps$ can be covered by at most $K_\eps$ balls of radius $\frac{r}{2}$.  In fact we will show the stronger property (needed only for the identification of the limit) that $K_\eps$ can be taken to be independent of $\eps$, if $n$ is large enough. This follows easily from Theorem \ref{bt-thm}. Indeed setting $K=9^d$ in Theorem \ref{bt-thm} we get an integer $n_0=n_0(K) \geq 1$. And we see that if $\diam(X_n)$ is large enough, there must be some $n_1 \in [n_0,\eps \diam(X_n)]$ such that $|S^{3n_1}| \leq K |S^{n_1}|$, for otherwise $|S^{3^{m} n_0}| \geq 9^{dm} |S^{n_0}|$ for all $m$ with $3^{m} n_0  \leq \eps \diam(X_n)$, and thus $|G| \geq 9^{dm}|S|$ for the maximal such $m$, which implies $C \diam(X_n)^d \geq 9^{dm} \geq (\eps \diam(X_n) / 3n_0)^{2d}$, a contradiction for $n$ large. So we apply Theorem \ref{bt-thm} and get that $|B(x_0,4k)| \leq K' |B(x_0,k)|$ for all $k \geq \eps \diam(X_n)$ and all $n$ large enough, for some $K'>0$ independent of $n$. From this we conclude that every ball of radius $4k$ can be covered by at most $K'$ balls of radius $2k$, and this ends the proof.
\end{proof}

We remember from this argument, that there exists some $K=K(C,d)>0$ such that if $X$ is a limit of a sequence of $\widehat{X_n}$ with $X_n$ satisfying $(\ref{cond})$ and with $\diam(X_n) \to +\infty$, then $X$ is $K$-doubling in the sense that every ball of radius $r>0$ can be covered by at most $K$ balls of radius $\frac{r}{2}$. It follows immediately from this that $X$ has finite Hausdorff dimension, hence \emph{finite topological dimension}.

\bigskip

\subsubsection{Identification of the limit: outline} We now turn to the identification of the limit. This part is almost purely formal in that it follows from a simple combination of known facts (we differ here from the original treatment of \cite{bft}). Here is the argument. If $\widehat{X_n}$ converges to some compact metric space $X$, then $G:=Isom(X)$ must act transitively on $X$, because $G_n:=Isom(X_n)$ acts transitively on $X_n$. Moreover, $X$ is pathwise connected and locally connected (because the balls in the graphs $X_n$ are connected), and has finite topological dimension. From this, a lemma of Montgomery-Zippin, which is a simple consequence of the Peter-Weyl theorem, tells us that $G$ is a (compact) Lie group. Moreover up to passing to a subsequence (equivariant GH-convergence), we may assume that there is an $\eps_n$-representation of $G_n$ in $G$, which is $\eps_n$-dense, for some $\eps_n \to 0$. A well-known result (the so-called \emph{Ulam stability} of almost representations of finite groups in compact groups, due to Alan Turing in this setting and further generalized by Kazhdan and others) ensures that the $\eps_n$-representation must be close to a genuine representation. Then Jordan's lemma on finite subgroups of Lie groups and the almost density of the representations imply that $G^0$ must be abelian, hence $X=G^0$ is a torus. The metric on $X=G^0$ is geodesic and left-invariant: all such metrics on a torus are Finsler (i.e. given by a norm).

\bigskip

\subsubsection{Montgomery-Zippin lemma} We now pass to the details. Set $G:=Isom(X)$. We already know that $X$ is pathwise connected, locally connected, $G$-transitive, and finite dimensional. We have the following well-known lemma:

\begin{lemma}\label{MZlemma} If $G$ is a compact group acting faithfully and transitively on a finite-dimensional, connected and locally connected topological space, then $G$ is a Lie group with finitely many connected components.
\end{lemma}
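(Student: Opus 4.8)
The plan is to derive this from the Peter--Weyl theorem, i.e.\ the compact case of the Gleason--Yamabe theorem (Theorem~\ref{gleason-yamabe}): a compact group is the projective limit of compact Lie groups. One may assume $X$ is metrizable (as it is in the application, being a Gromov--Hausdorff limit of finite metric graphs); then $G$ is second countable, so one may pick a \emph{decreasing} sequence $N_1\supseteq N_2\supseteq\cdots$ of closed normal subgroups with $\bigcap_i N_i=\{1\}$ and each $G/N_i$ a compact Lie group (the $N$ with Lie quotient form a downward directed family — since $G/(N\cap N')$ embeds in $G/N\times G/N'$ — with trivial intersection, so by compactness arbitrarily small such $N$ exist). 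Since a compact Lie group has only finitely many connected components, it suffices to prove that $N_i=\{1\}$ for some $i$. Suppose, for contradiction, $N_i\ne\{1\}$ for all $i$.

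Fix $x_0\in X$ and let $H:=\{g\in G:gx_0=x_0\}$, a closed subgroup; as $G$ is compact and acts continuously and transitively on the Hausdorff space $X$, the orbit map induces a homeomorphism $G/H\cong X$. The products $HN_i$ are closed subgroups (images of the compact sets $H\times N_i$ under multiplication), they decrease with $i$, and $\bigcap_i HN_i=H$: if $g$ lies in every $HN_i$, choose $n_i\in N_i$ with $gn_i^{-1}\in H$; any cluster point of $(n_i)$ lies in $\bigcap_i N_i=\{1\}$ and in the closed set $gH$, forcing $g\in H$. Because $N_i$ is normal and $G$ is transitive, the set of points fixed by $N_i$ is $G$-invariant, hence empty or all of $X$; thus $N_i$ acts trivially on $X$ iff $N_i\subseteq H$ iff $HN_i=H$. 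So, to contradict faithfulness, it is enough to find $i_1$ with $HN_{i_1}=H$.

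Consider $X_i:=G/HN_i=(G/N_i)/(HN_i/N_i)$, a homogeneous space of the compact Lie group $G/N_i$, hence a compact connected smooth manifold; write $d_i:=\dim X_i$. The natural maps $X_{i+1}\to X_i$ are fiber bundles of homogeneous spaces of Lie groups, so $d_{i+1}\ge d_i$, and $X=\varprojlim_i X_i$. The crux, and the step I expect to be the main obstacle, is the following dichotomy. If $(d_i)$ is unbounded: over a small ball $U$ of the $d_1$-manifold $X_1$ every bundle $X_i\to X_1$ is trivial (a principal bundle over a contractible base is trivial), compatibly in $i$, so $X|_U\cong U\times F$ with $F:=\varprojlim_i\bigl(HN_1/HN_i\bigr)$ a closed subspace of $X$; iterating the same observation inside $F$ over balls of the successive manifolds $HN_1/HN_i$, and using $\dim(U'\times Z)=\dim U'+\dim Z$ when $U'$ is open in a manifold, one gets $\dim X\ge\dim F\ge d_i-d_1$ for every $i$, contradicting finite-dimensionality. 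If instead $d_i=d$ for all $i\ge i_0$, then for such $i$ the bundle $X_{i+1}\to X_i$ has $0$-dimensional, hence finite, fibers, i.e.\ is a finite covering; over a simply connected ball of $X_{i_0}$ each of these covers trivializes, so locally $X$ is a product of that ball with the inverse limit $C$ of the finite fibers, and $C$, being totally disconnected and (since $X$ is) locally connected, is finite. Hence the covering degrees stay bounded, so they are eventually $1$ and $X_{i+1}\to X_i$ is a homeomorphism for $i$ large; then $HN_{i+1}=HN_i$ for $i$ large, whence $HN_{i_1}=\bigcap_i HN_i=H$ for some $i_1$, the desired contradiction. Therefore some $N_i$ is trivial and $G$ is a compact Lie group, with finitely many connected components.

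The genuinely non-formal input is the dimension dichotomy: finite topological dimension of $X$ forbids the fibers of the tower from carrying positive dimension infinitely often, while local connectedness together with connectedness forbids a nontrivial residual profinite cover. This is precisely the classical Montgomery--Zippin lemma, and one may instead simply cite it from their book; everything else is the Peter--Weyl structure of compact groups plus elementary manipulations of coset spaces.
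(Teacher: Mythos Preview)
Your argument is correct and follows essentially the same route as the paper's: Peter--Weyl gives a nested sequence $N_i$ with Lie quotients, the manifolds $X_i$ stabilize in dimension by finite-dimensionality of $X$, and local connectedness kills the residual profinite piece. The main organizational difference is that the paper works directly with the groups $N_i$: once the dimensions stabilize at $i_0$, it observes that $N_{i_0}/N_i$ is a zero-dimensional compact Lie group, hence finite, so $N_{i_0}$ is profinite; then the local product decomposition $\Omega \cong U \times N_{i_0}$ of an open set of $X$ forces $N_{i_0}$ to be finite (hence eventually trivial) by local connectedness. You instead track the overgroups $HN_i$ of the stabilizer and argue via covering degrees of $X_i \to X_{i_0}$, reaching the same conclusion through $HN_{i_1}=H$ and faithfulness. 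Both are fine; the paper's version is a bit shorter because it avoids the explicit $\bigcap_i HN_i = H$ computation and the separate treatment of the unbounded-$d_i$ case (it simply uses that $X$ surjects, locally with a section, onto each $X_i$, so $\dim X \ge d_i$). Two small remarks on your write-up: the bundles $X_i \to X_1$ are not principal in general (the fiber $HN_1/HN_i$ need not be a group), though they are smooth fiber bundles and hence trivial over a ball; and the ``compatibly in $i$'' clause for the trivializations deserves a word (choose a single local section of $G/N_1 \to X_1$, lift once to $G$ using that $N_1$ is compact, and push down to every $X_i$).
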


\begin{proof} The lemma holds for locally compact groups in general and is due to Montgomery-Zippin (see \cite[\S 6.3, p. 243]{montgomery-zippin}). It easily follows from the Gleason-Yamabe theorem. In the case of compact groups, only Peter-Weyl is needed: we include the proof for the reader's convenience.

According to the Peter-Weyl theorem, there is a nested sequence of closed normal subgroups $N_i$ in $G$ such that $G/N_i$ is a Lie group and $\bigcap_i N_i = \{1\}$. We have to show that $N_i$ is trivial for $i$ large. Let $X_i:=X/N_i$ be the quotient space under the action of $N_i$. The space $X$ is naturally a projective limit of the $X_i$'s, i.e. $X=\varprojlim X_i$. Note that the $X_i$'s are manifolds: they are homogeneous spaces of the compact Lie group $G/N_i$. Since $X$ is finite dimensional as a topological space, we must have $\dim(X_i)=\dim(X_{i_0})$ when $i$ is larger than some $i_0$. This implies that $N_{i_0}/N_{i}$ is zero-dimensional if $i \geq i_0$. Being a compact Lie group, it must be finite. Therefore $X_{i}$ is a finite normal cover of $X_{i_0}$ with finite group of deck transformations $N_{i_0}/N_{i}$. The group $N_{i_0}$ is a  projective limit of finite groups, i.e. it is profinite.

We claim that $N_{i_0}$ is finite. This follows from the local connectedness of $X$. Indeed, by covering theory, the universal cover of $X_{i_0}$ is a cover of each $X_i$, $i \geq i_0$. Therefore, given $x \in X_{i_0}$ there exists an open neighborhood $U$ of $x$ such that $\pi_i^{-1}(U)$ is a disjoint union of open subsets of $X_i$ each homeomorphic to $U$ and permuted by the action of $N_{i_0}/N_i$ for $i \geq i_0$. Hence $\Omega:=\varprojlim \pi_i^{-1}(U)$ is homeomorphic to $U \times N_{i_0}$. Note that since $X=\varprojlim X_i$, $\Omega$ is also an open subset of $X$, which is not locally connected at any point, unless $N_{i_0}$ is finite. This ends the proof.
\end{proof}

Note in passing that the hypothesis that $X$ be locally connected is important to avoid examples such as the solenoid $(\R \times \Q_2) / \Z[\frac{1}{2}]$, which is a connected and finite dimensional compact group, but not a Lie group.

Note also that, since $X$ is connected, the connected component of the identity $G^0$ in $G$ acts transitively on $X$.

\bigskip

\subsubsection{Ulam stability and almost representations}

We now recall the notion of almost representation. Given a target group $G$, endowed with a left-invariant distance $d$ making it a metric space, a mapping $\phi$ from a group $\Gamma$ to $G$, is called \emph{an $\eps$-representation} if for every $x,y \in \Gamma$,
$$d(\phi(xy),\phi(x)\phi(y)) \leq \eps.$$

We say moreover that $\phi$ is \emph{$\eps$-dense} if for each $g \in G$ there is $x \in \Gamma$ such that $d(g, \phi(x)) \leq \eps$.

\bigskip

 Ulam stability (see \cite{ulam}) is the general problem of determining to what extent an $\eps$-representation of $\Gamma$ is $\delta(\eps)$-close to a genuine group homomorphism, i.e. does there exists a group homomorphism $\rho: \Gamma \to G$ such that $d(\phi(x), \rho(x)) \leq \delta(\eps)$, for some $\delta(\eps)$ tending to $0$ as $\eps \to 0$. A little known and recently rediscovered \cite{bft,gelander} 1938 paper of Alan Turing \cite{turing} deals precisely with this problem in the case when $\Gamma$ is a finite group and $G$ a connected Lie group (endowed with a left-invariant Riemannian metric):

\begin{theorem}[Turing's theorem] Suppose $G$ is a connected compact Lie group, then there is $C>0$ such that every $\eps$-dense $\eps$-representation of some finite group into $G$ is $C\eps$-close to a group homomorphism. In particular, if there is a sequence of finite groups  $\Gamma_n$  and  $\eps_n$-representations $\phi_n: \Gamma_n \to G$ that are $\eps_n$-dense in $G$ for some $\eps_n \to 0$, then $G$ is a compact abelian Lie group.
\end{theorem}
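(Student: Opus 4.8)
The statement splits into the stability assertion and the ``in particular'' (the abelianness of $G$). I would establish them in that order.

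\textit{Part I: Ulam stability.} The plan is the classical averaging argument. Fix a faithful unitary representation $G \hookrightarrow U(m)$ (Peter--Weyl), with $m = m(G)$, and put on $G$ the metric induced from $U(m)$, so that $\phi$ becomes an $\eps$-representation into $U(m)$. Build the almost-intertwiner $\iota \colon \C^m \to \C^m \otimes \C[\Gamma] = \bigoplus_{g \in \Gamma} \C^m$, $v \mapsto |\Gamma|^{-1/2}\,(\phi(g)^{-1}v)_{g}$; it is an isometry, and because $\phi(h^{-1}g)^{-1}$ is within $O(\eps)$ of $\phi(g)^{-1}\phi(h)$, the image $W := \iota(\C^m)$ is $O(\eps)$-almost invariant under the genuine unitary representation $\lambda$ of $\Gamma$ permuting the summands. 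Averaging the orthogonal projection $P_W$ over $\Gamma$ produces $\bar P := |\Gamma|^{-1}\sum_{h} \lambda(h) P_W \lambda(h)^{-1}$, which commutes with $\lambda$ and satisfies $\|\bar P - P_W\| = O(\eps)$; for $\eps$ below a constant depending on $G$ its spectral projection $Q$ for eigenvalues $> 1/2$ is a genuine $\lambda$-invariant rank-$m$ projection with $\|Q - P_W\| = O(\eps)$. Restricting $\lambda$ to the image of $Q$ and transporting back to $\C^m$ by a unitary $O(\eps)$-close to $\iota$ yields a genuine homomorphism $\rho \colon \Gamma \to U(m)$ with $\sup_x \|\rho(x) - \phi(x)\| = O(\eps)$. (For the conclusion ``close to a homomorphism into $G$'' one adds a standard rigidity step — $\rho$ is conjugate into $G$ by an element $O(\eps)$-close to $1$ — or runs the same scheme intrinsically inside $G$ with a bi-invariant metric; but Part II below only needs $\rho$ with values in $U(m)$. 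The $\eps$-density hypothesis is not used here.)

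\textit{Part II: $G$ is abelian.} Apply Part I to the $\eps_n$-representations $\phi_n$: there are homomorphisms $\rho_n \colon \Gamma_n \to U(m)$ with $\sup_x\|\rho_n(x) - \phi_n(x)\| \le C\eps_n$, so the finite subgroup $F_n := \rho_n(\Gamma_n)$ is $\delta_n$-dense in $G$ with $\delta_n = (C+1)\eps_n \to 0$. By Jordan's theorem for finite subgroups of $\GL_m(\C)$ there is $J = J(m) = J(G)$ and a normal abelian subgroup $A_n \trianglelefteq F_n$ with $[F_n : A_n] \le J$; in particular $x^{J!} \in A_n$ for every $x \in F_n$, since the order of $xA_n$ divides $|F_n/A_n| \le J$ and hence divides $J!$. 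Now fix arbitrary $g, h \in G$. Since $G$ is compact and connected, its exponential map $\g \to G$ is surjective, so $g = u^{J!}$ and $h = v^{J!}$ for some $u, v \in G$. Using $\delta_n$-density pick $\gamma_n, \gamma_n' \in \Gamma_n$ with $\|\rho_n(\gamma_n) - u\| < \delta_n$ and $\|\rho_n(\gamma_n') - v\| < \delta_n$. Then $\rho_n(\gamma_n)^{J!}$ and $\rho_n(\gamma_n')^{J!}$ both lie in the abelian group $A_n$, so they commute; letting $n \to \infty$ and using continuity of $x \mapsto x^{J!}$ and of the commutator map on $U(m)$ gives $[u^{J!}, v^{J!}] = [g,h] = 1$. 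As $g,h$ were arbitrary, $G$ is abelian, hence (being compact connected) a torus.

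\textit{Main obstacle.} Part II is soft: it only combines Jordan's theorem, surjectivity of the exponential on a compact connected Lie group, and a limiting argument. The real content is Part I, i.e.\ the quantitative Ulam stability — getting a \emph{genuine} homomorphism with displacement $O(\eps)$ and a constant that depends only on $G$; the one-shot averaging-projection argument above is the cleanest route, and the only subtlety beyond bookkeeping is the (optional) refinement that lands the homomorphism inside $G$ rather than in the ambient unitary group.
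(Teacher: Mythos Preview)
Your proposal is correct and follows essentially the same route as the paper: embed $G$ in a unitary group via Peter--Weyl, invoke Ulam stability for finite (amenable) groups to upgrade each $\phi_n$ to a genuine homomorphism, and then apply Jordan's theorem to the resulting dense finite subgroups to force $G$ abelian. The paper outsources Part~I to Kazhdan's theorem (with a pointer to the Burger--Ozawa--Thom averaging proof, which is exactly the argument you wrote out), and for Part~II simply says ``follows immediately from Jordan''; your $J!$-th-power trick together with surjectivity of the exponential is a clean way to make that implication explicit.
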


The second part of the statement follows immediately from the combination the first and of Jordan's theorem on finite subgroups of Lie groups: that they contain an abelian subgroup of bounded index (\cite{jordan,curtis-reiner,breuillard-jordan}). The first part actually holds without the assumption of $\eps$-density (we do not need this refinement for the proof of Theorem \ref{toruslimit}), as follows from the following more general result of Kazhdan \cite[Theorem 1]{kazhdan}. Let $G=\mathcal{U}(\mathcal{H})$ be the unitary group of a separable Hilbert space, endowed with the bi-invariant distance $d(g,h):=\|g-h\|$.

\begin{theorem}[Ulam stability of amenable groups]\label{kaz} Every $\eps$-representation $\phi$ of an amenable group $\Gamma$ into $G$ is $\eps$-close to a genuine representation $\rho$, i.e. $d(\phi(\gamma),\rho(\gamma)) \leq \eps$ for all $\gamma \in \Gamma$.
\end{theorem}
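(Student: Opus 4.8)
The plan is to realize a genuine unitary representation close to $\phi$ as a compression of the left regular representation of $\Gamma$ on a Hilbert space manufactured from $\Gamma$ by means of an invariant mean. First I would fix a left-invariant mean $m$ on $\ell^\infty(\Gamma)$, which exists precisely because $\Gamma$ is amenable. Let $\mathcal K$ be the Hilbert space obtained from the space of bounded functions $f\colon \Gamma\to\mathcal H$ by separating and completing with respect to the positive semi-definite form $\langle f,g\rangle:=m_x\langle f(x),g(x)\rangle_{\mathcal H}$; left translation $(\lambda(\gamma)f)(x):=f(\gamma^{-1}x)$ is then a \emph{genuine} unitary representation of $\Gamma$ on $\mathcal K$, by left-invariance of $m$. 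Embed $\mathcal H$ into $\mathcal K$ by the map $V$ with $(V\xi)(x):=\phi(x)^{*}\xi$, which is isometric because each $\phi(x)$ is unitary. Using that $\phi$ is an $\eps$-representation — so that $\phi(e)$, $\phi(\gamma^{-1})^{*}$, and $\phi(\gamma^{-1}x)^{*}$ are within $O(\eps)$ of $I$, of $\phi(\gamma)$, and of $\phi(x)^{*}\phi(\gamma)$ respectively — one checks $\|\lambda(\gamma)V-V\phi(\gamma)\|=O(\eps)$ uniformly in $\gamma$, i.e. $V$ is an $O(\eps)$-equivariant isometric embedding.

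\textbf{Straightening the subspace.} Let $E:=VV^{*}$ be the orthogonal projection of $\mathcal K$ onto $V\mathcal H$. Approximate equivariance of $V$ yields $\|\lambda(\gamma)E\lambda(\gamma)^{*}-E\|=O(\eps)$ for every $\gamma$. Now average: define $\widetilde E$ by $\langle \widetilde E u,v\rangle:=m_\gamma\langle \lambda(\gamma)E\lambda(\gamma)^{*}u,v\rangle$. By left-invariance of $m$ this $\widetilde E$ is exactly $\lambda$-equivariant, $\lambda(g)\widetilde E\lambda(g)^{*}=\widetilde E$ for all $g$; it is positive with $0\le\widetilde E\le I$ and $\|\widetilde E-E\|=O(\eps)$. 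Hence the spectrum of $\widetilde E$ lies in $[0,\delta]\cup[1-\delta,1]$ with $\delta=O(\eps)<\tfrac12$, so the spectral projection $F:=\mathbf 1_{[1/2,1]}(\widetilde E)$ is a genuine projection commuting with every $\lambda(\gamma)$ (since $\widetilde E$ does) and with $\|F-E\|=O(\eps)$. Thus $F\mathcal K$ is a genuine $\lambda$-invariant subspace, and since $\|F-E\|<1$ there is a canonical rotation unitary $W$ of $\mathcal K$ with $WEW^{*}=F$ and $\|W-I\|=O(\eps)$.

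\textbf{Conclusion.} Set $U:=W|_{V\mathcal H}\circ V\colon \mathcal H\to F\mathcal K$, a unitary, and define $\rho(\gamma):=U^{*}\lambda(\gamma)U$; as $\lambda$ restricted to the invariant subspace $F\mathcal K$ is a genuine unitary representation, so is $\rho$. Unwinding $\rho(\gamma)=V^{*}W^{*}\lambda(\gamma)WV$ and using $\|W-I\|=O(\eps)$ together with $\|\lambda(\gamma)V-V\phi(\gamma)\|=O(\eps)$ and $V^{*}V=I$, one gets $\|\rho(\gamma)-\phi(\gamma)\|=O(\eps)$ uniformly in $\gamma$. This establishes the statement with $\eps$ replaced by $C\eps$ for an absolute constant $C$; recovering the sharp bound $d(\phi(\gamma),\rho(\gamma))\le\eps$ would require a more careful accounting of $W$ and of the spectral cut-off (or, alternatively, a fixed-point argument for $\Gamma$ acting on the weak-$*$ compact convex set of admissible corrections of $\phi$), but any $O(\eps)$ bound already suffices for the intended use in Turing's theorem, where one only needs, via Jordan's theorem, that the ambient compact Lie group be abelian.

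\textbf{Where the difficulty lies.} The step that genuinely uses amenability — and the one I expect to need the most care — is the passage from $E$ to the $\lambda$-equivariant $\widetilde E$ by averaging against $m$; this is exactly what cannot be done for non-amenable $\Gamma$, consistently with Kazhdan's counterexamples on free groups. Two technical points have to be handled honestly there: the mean is only finitely additive, so ``averages of operators'' exist only weakly, through sesquilinear forms, and one must check that positivity, self-adjointness and the estimate $\|\widetilde E-E\|=O(\eps)$ all survive this weak averaging; and the elementary but slightly fiddly fact that two orthogonal projections at distance $<1$ are conjugate by an explicit unitary close to the identity.
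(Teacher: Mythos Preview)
The paper does not actually prove this theorem: immediately after stating it, the author writes ``For a very short proof of Kazhdan's theorem, as well as a modern view on Ulam stability, we refer the reader to the paper \cite{bot}, in particular Theorem 3.2 there.'' So there is nothing in the paper to compare your argument against beyond that pointer.

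That said, your proposal is a correct proof and is essentially the argument of Burger--Ozawa--Thom that the paper cites: build the ``mean ultrapower'' Hilbert space $\mathcal K$ on which left translation is a genuine unitary representation, embed $\mathcal H$ nearly equivariantly via $V\xi(x)=\phi(x)^*\xi$, average the projection $VV^{*}$ against the invariant mean to obtain a $\lambda$-invariant operator, take its spectral projection, and rotate back. The technical caveats you flag (weak averaging of operators via sesquilinear forms, and the close-projections-are-unitarily-conjugate lemma) are exactly the two points one has to write out carefully, and both are standard. Your honest admission that the argument yields $\|\rho(\gamma)-\phi(\gamma)\|\le C\eps$ rather than the sharp $\eps$ is appropriate; the constant stated in the paper is somewhat optimistic as phrased, and for the application to Turing's theorem (and hence to Theorem~\ref{toruslimit}) only an $O(\eps)$ bound is needed, as you correctly note.
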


Note that every finite group is amenable, so this applies uniformly to all finite groups $\Gamma$, and also note that every compact Lie group is a subgroup\footnote{Ulam stability may fail for certain target groups other than $\mathcal{U}(\mathcal{H})$, for example it fails for $G$ the $p$-adic integers, see \cite[Prop 1.]{kazhdan}} of the  unitary matrices $\mathcal{U}(\C^n)$ for some $n$ (the above distance being bi-Lipschitz to any left-invariant Riemannian metric), so the result applies to this situation as well and implies Turing's theorem.

For a very short proof of Kazhdan's theorem, as well as a modern view on Ulam stability, we refer the reader to the paper \cite{bot}, in particular Theorem 3.2 there.

\subsubsection{Equivariant GH convergence} When each term of a sequence of compact metric spaces comes equipped with a group action by isometries, one can strengthen the Gromov compactness theorem and argue that under the same hypotheses, one can find a subsequence which converges in an almost equivariant way.  Namely we have:

\bigskip

\noindent{{\bf Equivariant Gromov-Hausdorff compactness:}} \emph{Let $(X_n,d_n)\to (X,d)$ be a converging sequence of compact metric spaces. Let $\Gamma_n:=Isom(X_n)$  Then there is a closed subgroup of isometries $G$ of $X$ and subsequence $\{X_{n_k}\}$, such that $\{(X_{n_k},d_{n_k})\}$ equivariantly GH converges to $(X,G)$, in other words, there are maps $f_k: X_{n_k} \to X$ and $\phi_k: \Gamma_{n_k} \to G$ with $\eps_k$-dense image in $X$ and $G$ respectively (where $\eps_k \to 0)$, such that
\begin{equation}\label{conve}
 \sup_{x \in X_{n_k}, \gamma \in \Gamma_{n_k}} d(f_k(\gamma x) , \phi_k(\gamma) f_k(x)) \xrightarrow[k \to +\infty]{}0
\end{equation}
}
\bigskip

This is a classical fact, which holds more generally for locally compact metric spaces and pointed GH convergence, see for example \cite[Prop. 3.6]{fukaya-yamaguchi}. We give a proof for convenience.

\begin{proof} By definition there are almost isometries $f_n : X_n \to X$ and $g_n:X \to X_n$ such that $f_n \circ g_n$ and $g_n \circ f_n$ are both uniformly close to the identity. Pick a nested sequence of $\eps_k$-dense finite subsets $F_k$ in $X$, where $\eps_k \to 0$. The restrictions of elements in $f_n \Gamma_n g_n$ to $F_k$ form a compact set of maps from $F_k$ to $X$. By the usual diagonal process we can extract a subsequence of $(X_n)_n$ such that these compact sets converge as $n$ tends to infinity for every given $k$. Let $G_k$ be the limit. The restriction map $G_{k+1} \to G_k$ is surjective. The projective limit $G:=\varprojlim G_k$ is a compact set of maps on $\cup_k F_k$. Maps in $G_k$ are distance preserving, hence so are maps in $G$. Thus maps in $G$ extend by continuity to isometries of $X$.

Given $k$ there is $n_k$ such that $G_k$ is $\eps_k$-close to the restriction of $f_n\Gamma_ng_n$ to $F_k$ for $n=n_k$. Thus we may find for each $\gamma \in \Gamma_{n_k}$ an element $\phi_k(\gamma)$ in $G_k$ (hence also one in $G$) such that $f_{n_k}\gamma g_{n_k}$ is $3\eps_k$-close to $\phi_k(\gamma)$ on $X$. Similarly the image of $\phi_k$ is $3\eps_k$-dense in $G$. It is then straightforward to check $(\ref{conve})$ and that $G$ is indeed a subgroup.
\end{proof}

Note that it follows directly from $(\ref{conve})$ that $\phi_{n_k}$ is a $\delta_k$-representation of $\Gamma_{n_k}$ inside $G$ for some $\delta_k \to 0$.

\bigskip

By Lemma \ref{MZlemma}, in the setting of Theorem \ref{toruslimit}, the limit group $G$ is a compact Lie group and the groups $\Gamma_n$ are finite. Therefore Turing's theorem (or the combination of Kazhdan and Jordan) implies that $G^0$ is an abelian compact Lie group, i.e. a torus $\R^k/\Z^k$. We have $X=G^0/H$ for some closed subgroup $H$. But since $G^0$ acts faithfully on $X$ (it is a subgroup of isometries) and $G^0$ is abelian, we must have $H=1$, i.e. $X=G^0$.

\subsubsection{Invariant length metrics on Lie groups} To conclude the proof of Theorem \ref{toruslimit} it remains to identify the metric on the limit space $X$. We know that it is a \emph{length metric} a.k.a intrinsic metric, i.e. the distance between two points coincides with the infimum of the length of paths between two points. This is so, because $X$ is a GH limit of graphs and the graph metric is a length metric. Since $X=\R^k/\Z^k$, the distance on $X$ lifts naturally to a length metric on $\R^k$, which is invariant under translations. So the last part of Theorem \ref{toruslimit} is now a consequence of the following simple and well-known lemma, of which we give a proof for convenience.

\begin{lemma}If $d$ is a translation invariant length metric on $\R^k$, then there is a norm $\|\cdot \|$ on $\R^k$ such that $d(x,y)=\|x-y\|$ for all $x,y$.
\end{lemma}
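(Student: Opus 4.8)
The plan is to show that $\|v\|:=d(0,v)$ is a norm on $\R^k$; since $d$ is translation invariant, $d(x,y)=d(0,y-x)=\|y-x\|$, so this finishes the proof. Three of the norm axioms are immediate: nonnegativity, and $\|v\|=0\iff v=0$, come from the metric axioms; symmetry $\|{-v}\|=\|v\|$ follows from translation invariance together with the symmetry of $d$, since $\|{-v}\|=d(0,-v)=d(v,0)=\|v\|$; and subadditivity follows from the triangle inequality, $\|v+w\|=d(0,v+w)\le d(0,v)+d(v,v+w)=\|v\|+\|w\|$. Thus everything reduces to positive homogeneity, $\|tv\|=|t|\,\|v\|$.

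For homogeneity, note first that subadditivity already gives $\|nv\|\le n\|v\|$ for $n\in\N$, hence also $\|v\|\le n\|v/n\|$; so, granting the continuity of $t\mapsto\|tv\|$ (which follows from $|\,\|tv\|-\|sv\|\,|\le\|(t-s)v\|$ once $\|\cdot\|$ is known to be bounded near the origin), it suffices to establish the reverse bound $\|nv\|\ge n\|v\|$ for every $n\in\N$: that yields $\|qv\|=q\|v\|$ for all rational $q\ge0$, hence for all real $t\ge0$ by continuity, and for negative $t$ using $\|{-v}\|=\|v\|$. The bound $\|nv\|\ge n\|v\|$ is where the \emph{length} hypothesis is used essentially: it is false for general translation invariant metrics, e.g.\ for the (non-length) metric $d(x,y)=\min\{|x-y|_{\mathrm{eucl}},1\}$, for which $\|\cdot\|$ is not a norm. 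The route I would take is: using that $d$ coincides with its own intrinsic metric, show first that $d$ induces the Euclidean topology on $\R^k$ and is bi-Lipschitz to $|\cdot|_{\mathrm{eucl}}$ on a neighbourhood of $0$ --- the existence of $d$-rectifiable curves forces the $d$-length of a short Euclidean segment to be a finite positive multiple of its Euclidean length, which pins $\|\cdot\|$ between two multiples of $|\cdot|_{\mathrm{eucl}}$ near $0$. Then, given $\eps>0$, pick a $d$-rectifiable curve from $0$ to $nv$ of $d$-length $<\|nv\|+\eps$, reparametrise by $d$-arclength, and cut it into a fine chain $0=s_0,s_1,\dots,s_N=nv$ with $\sum_j\|s_j-s_{j-1}\|<\|nv\|+\eps$ and arbitrarily small steps; by the comparison near $0$ the chain is geometrically fine, and one locates indices $0=m_0<m_1<\dots<m_n=N$ with $s_{m_i}$ within a small Euclidean distance of $iv$. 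Then
$$\|nv\|+\eps>\sum_{j=1}^N\|s_j-s_{j-1}\|\ge\sum_{i=1}^n d(s_{m_{i-1}},s_{m_i})=\sum_{i=1}^n\|s_{m_i}-s_{m_{i-1}}\|\ge\sum_{i=1}^n\bigl(\|v\|-o(1)\bigr)=n\|v\|-o(1),$$
with $o(1)\to0$ as $\eps\to0$, and letting $\eps\to0$ gives $\|nv\|\ge n\|v\|$.

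I expect the main obstacle to be exactly this passage from purely metric information to geometric control: showing that a translation invariant length metric on $\R^k$ induces the Euclidean topology and is locally comparable to it, and that near-geodesics from $0$ to $nv$ are then forced to pass near the points $iv$. One has to be careful here because $\|\cdot\|$ is not assumed strictly convex, so the approximate midpoints of $0$ and $nv$ need not lie near $v$; this is why the argument is run through an entire fine chain and an elementary Euclidean estimate rather than through a single bisection point. Once homogeneity is in hand, the remaining verification that $\|\cdot\|$ is a norm with $d(x,y)=\|x-y\|$ is immediate from the reductions above.
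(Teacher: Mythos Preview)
Your reduction to proving $\|nv\|\ge n\|v\|$ is correct, and you rightly flag non-strict convexity as the danger point --- but the fix you propose does not actually work. The assertion ``one locates indices $m_i$ with $s_{m_i}$ within a small Euclidean distance of $iv$'' is false for a general near-geodesic. Take $d$ to be the $\ell^\infty$ metric on $\R^2$ and $v=(1,0)$, $n=2$: the path $(0,0)\to(1,1)\to(2,0)$ is an honest $d$-geodesic of length $2=\|2v\|$, yet it never comes near $(1,0)=v$. Refining this path into a fine chain changes nothing; no point on it is Euclidean-close to $v$. So the displayed chain of inequalities breaks at the step $\|s_{m_i}-s_{m_{i-1}}\|\ge\|v\|-o(1)$, because there is no reason for $s_{m_i}-s_{m_{i-1}}$ to be near $v$. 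Running the argument ``through an entire fine chain'' rather than a single midpoint does not rescue this: the obstruction is that near-geodesics can wander arbitrarily far (in the transverse direction) from the segment $[0,nv]$, not that they fail to hit a single midpoint.

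The paper sidesteps this by defining the norm differently: set $\|x\|:=\lim_{t\to 0}\frac{1}{t}d(0,tx)$, which is positively homogeneous \emph{by construction}, and then prove $d(0,x)=\|x\|$. One inequality $d(0,x)\le\|x\|$ comes from $d(0,x)\le n\,d(0,x/n)$. For the other, assume $d(0,x)\le(1-\eps)\|x\|$, take a fine $d$-chain $0=x_0,\dots,x_n=x$ with $\sum d(x_i,x_{i+1})\le(1-\eps/2)\|x\|$, and use that the convergence $\frac{1}{t}d(0,tw)\to\|w\|$ is \emph{uniform} on the unit sphere (by local compactness) to get $d(x_i,x_{i+1})\approx\|x_{i+1}-x_i\|$; then $\|x\|\le\sum\|x_{i+1}-x_i\|$ gives a contradiction. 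Note that this fine-chain step compares each increment to the \emph{limit} norm of that same increment --- it never needs the chain to pass near any prescribed point, which is exactly the claim your argument cannot supply.
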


\begin{proof}Set $\|x\|:=\limsup_{t \to 0} \frac{1}{t}d(0,tx)$. Clearly, by translation invariance and the triangle inequality $d(0,x) \leq n d(0,\frac{x}{n})$ for every $n \in \N$. It follows that $d(0,x) \leq \|x\|$. It is also straightforward to verify that $\|x+y\| \leq \|x\| + \|y\|$ and that $\|\lambda x\|=\lambda \|x\|$ if $\lambda >0$. Hence $\|x\|$ is a norm on $\R^k$. Moreover we also have $\|x\|:=\liminf_{t \to 0} \frac{1}{t}d(0,tx)$. Indeed if not there would be some $\beta <1$ and arbitrarily small $s,t$ such that $\frac{1}{s} d(0,sx) \leq  \beta \frac{1}{t}d(0,tx)$. If $s$ is chosen much smaller than $t$, say $t=ns + s'$, $0 \leq s' \leq s$, $n\geq 1$, then by the triangle inequality and translation invariance $d(0,tx) \leq nd(0,sx) + d(0,s'x)$, which eventually contradicts the previous inequality.

So $\|x\|:=\lim_{t \to 0} \frac{1}{t}d(0,tx)$. By local compactness of $\R^k$, this limit is uniform over $x$ with $\|x\|=1$. From this it is easy to conclude that $\|x\|=d(0,x)$. Indeed if not, then $d(0,x)\leq (1-\eps)\|x\|$ for some $x$ and $\eps>0$. Moreover given any $\delta>0$, there are points $x_0=0, \ldots, x_n=x$ such that $d(x_i,x_{i+1}) < \delta$ and $d(x_0,x_1)+\ldots+d(x_{n-1},x_n) \leq (1-\frac{\eps}{2})\|x\|$. However $d(x_i,x_{i+1})$ is uniformly very close to $\|x_i - x_{i+1}\|$, but $\|x\| \leq \|x_0-x_1\|+\ldots+\|x_{n-1}-x_n\|$ by the triangle inequality. A contradiction.
\end{proof}

This lemma is a special case of the following much more general result of Berestovski, which gives a complete characterization of left invariant length metrics on Lie groups.

\bigskip

Before we state Berestovski's theorem, let us give a general construction of left invariant length metrics on a Lie group. Pick a norm on the Lie algebra $\g$ of the Lie group $G$ and use it to define the length of piecewise differential paths in exactly the same way as left invariant Riemannian metrics on $G$ are defined, except that the norm is now arbitrary instead of Euclidean. This obviously gives rise to a left invariant (Finsler) length metric on $G$. More generally, we may consider a norm on a vector subspace $V$ of $\g$, which generates $\g$ as a Lie algebra. Similarly this allows to define the length of piecewise differentiable \emph{horizontal} paths, that is paths whose derivative, once translated back to the origin, everywhere belongs to $V$. The property of $V$ that it generates $\g$ can be shown (this is the Chow-Rachevski theorem from subriemannian geometry) to imply that every two points in $G$ can be joined by a horizontal piecewise differentiable continuous path. Hence this yields a well-defined length metric on $G$. Metrics arising from the above construction are called left invariant Carnot-Caratheodory-Finsler metrics.

\bigskip

When $G$ is abelian, then every subspace of $\g$ is a subalgebra, so this refinement does not produce new examples of left invariant metrics. But if $G$ is nilpotent (e.g. the Heisenberg group), then many new length metrics arise this way. In fact it can be shown (\cite{bere}) that $G$ abelian and $G=\R \ltimes \R^n$, where $\R$ acts by homotheties, are the only Lie groups in which all Carnot-Caratheodory-Finsler metrics are Finsler. We can now state:

\begin{theorem}[Berestovski \cite{bere}] Every left invariant length metric on a connected Lie group is Carnot-Caratheodory-Finsler.
\end{theorem}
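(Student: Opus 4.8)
The plan is to recover $d$ from an infinitesimal datum — a linear subspace $V\subseteq\g$ that generates $\g$ as a Lie algebra, together with a norm on $V$ — and to show that $d$ coincides with the associated left-invariant Carnot--Carath\'eodory--Finsler distance $d_{CC}$. One shows first that $(G,d)$ is locally compact and that $d$ induces the Lie group topology; this is part of Berestovski's analysis and is already visible in the $\R^k$ lemma above, where the topology is recovered a posteriori from the norm. I take this as given below, and by left-invariance it suffices to analyze $d$ near the identity $e$.

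First I would introduce the candidate norm. For $v\in\g$ set
$$\|v\|_V:=\limsup_{t\to 0^+}\frac{d(e,\exp(tv))}{t}\in[0,+\infty],$$
and let $V:=\{v\in\g:\|v\|_V<\infty\}$. That $V$ is a linear subspace and $\|\cdot\|_V$ a genuine norm on it is proved by exactly the device used in the $\R^k$ lemma above, now inside a possibly non-abelian group: positive homogeneity $\|\lambda v\|_V=|\lambda|\,\|v\|_V$ is the substitution $t\mapsto\lambda t$ together with $d(e,g^{-1})=d(e,g)$; subadditivity comes from the Lie product formula $\big(\exp(\tfrac tn v)\exp(\tfrac tn w)\big)^n\to\exp(t(v+w))$, which lets one bound $d(e,\exp(t(v+w)))$ by concatenating the $2n$ short left-translated segments, the Baker--Campbell--Hausdorff error being $O(t^2)$ and hence negligible after dividing by $t$ and letting $t\to0$; and positivity $\|v\|_V>0$ for $v\neq0$ uses that $d$ induces the manifold topology.

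The technical heart, and the step I expect to be the main obstacle, is a Rademacher-type statement: every $d$-rectifiable curve $\gamma\colon[0,1]\to G$ is \emph{$V$-horizontal}, in the sense that for a.e.\ $t$ the left-translated increment $\gamma(t)^{-1}\gamma(t+h)=\exp(w(t,h))$ satisfies $w(t,h)/h\to\dot\gamma(t)\in V$ with $\|\dot\gamma(t)\|_V$ equal to the metric speed of $\gamma$ at $t$, and moreover $\mathrm{length}_d(\gamma)=\int_0^1\|\dot\gamma(t)\|_V\,dt$. The difficulty is that $d$ need not be bi-Lipschitz to a left-invariant Riemannian metric — if $V\neq\g$ the metric is genuinely sub-Riemannian and only H\"older-comparable to a Riemannian distance — so classical Rademacher does not apply directly and one must either run a Pansu-type differentiation argument adapted to this setting or use Berestovski's original hands-on subdivision of $\gamma$ into short pieces controlled via BCH. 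Granting this, one obtains immediately that $V$ generates $\g$: any two points of $G$ are joined by a finite-length, hence $V$-horizontal, curve, so by the orbit theorem (Chow--Rashevskii, as mentioned above) the subgroup $\langle\exp V\rangle$ is the connected subgroup with Lie algebra the Lie algebra generated by $V$, and must be all of $G$; thus $d_{CC}$ for $(V,\|\cdot\|_V)$ is finite on all of $G$.

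Finally I would conclude $d=d_{CC}$ by two inequalities, both now routine. For $d\le d_{CC}$: a piecewise-differentiable $V$-horizontal curve $\sigma$ satisfies $d(\sigma(t),\sigma(t+h))\le d(e,\exp(h\dot\sigma(t)))+o(h)\le h\|\dot\sigma(t)\|_V+o(h)$ by the definition of $\|\cdot\|_V$ and smoothness of $\sigma$, so $\mathrm{length}_d(\sigma)\le\int\|\dot\sigma\|_V$ and hence $d\le d_{CC}$. For $d\ge d_{CC}$: choose a curve $\gamma$ from $e$ to $g$ with $\mathrm{length}_d(\gamma)$ arbitrarily close to $d(e,g)$; by the length formula of the previous paragraph $\gamma$ is $V$-horizontal with $\int\|\dot\gamma\|_V=\mathrm{length}_d(\gamma)$, whence $d_{CC}(e,g)\le\mathrm{length}_d(\gamma)\to d(e,g)$. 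Together with left-invariance this identifies $d$ with the left-invariant Carnot--Carath\'eodory--Finsler metric of $(V,\|\cdot\|_V)$, as claimed; all the genuine work is concentrated in the a.e.\ differentiability and length formula for $d$-rectifiable curves, the rest being bookkeeping with BCH, the Lie product formula, and the orbit theorem.
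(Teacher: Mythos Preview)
The paper does not prove this theorem. Berestovski's result is only \emph{stated} here, with a citation to \cite{bere}, immediately after the paper proves the special abelian case (the lemma that a translation-invariant length metric on $\R^k$ comes from a norm). The sentence following the statement is ``This ends our discussion of Theorem \ref{toruslimit},'' and no argument for the general Lie group case is given. So there is no proof in the paper to compare your proposal against.

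That said, your outline is a reasonable sketch of how Berestovski's argument actually goes, and you have correctly identified where the real work lies. A few comments. Your definition of $V$ and $\|\cdot\|_V$ is the natural one, and the subadditivity via the Lie product formula is the right idea; but note that the limsup need not a priori be a limit in the non-abelian case, and this is tied to the same differentiability issue you flag later. The step you call ``Rademacher-type'' is indeed the crux: one needs that short $d$-geodesic segments, once left-translated to the identity, are tangent to $V$, and that their $d$-length is recovered by integrating $\|\cdot\|_V$. Berestovski handles this by working with the dilation structure $\delta_t(v)=tv$ on $\g$ via the exponential chart and analyzing the metric tangent cone; your description of it as a Pansu-type differentiation is apt but would need to be made precise, since Pansu's theorem as usually stated already assumes a Carnot group structure that you are trying to produce. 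The two concluding inequalities $d\le d_{CC}$ and $d\ge d_{CC}$ are fine once the length formula is in hand. In short: your plan is essentially the right one, the acknowledged gap is genuine and is exactly the content of Berestovski's paper, and the present paper simply quotes the result rather than reproving it.
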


This ends our discussion of Theorem \ref{toruslimit}.

\bigskip

\noindent {\bf Acknowledgement.} These lectures were prepared for the workshop on \emph{Additive and Analytic Combinatorics} held at the IMA in Minneapolis from September 29 to October 3, 2014. Due to a health problem, I was unfortunately not able to give the lectures. I am very grateful to Terry Tao, who replaced me on the spot. I also thank T. Tao, R. Tessera and M. Tointon for their valuable comments on the text. The author is partially supported by ERC grant no. 617129.

\end{document}